\documentclass[11pt]{amsart}

\usepackage{latexsym,amssymb,amsmath,graphicx,
hyperref,diagbox,subfig,caption,verbatim,ulem}
\usepackage{tikz}
\usetikzlibrary{shapes}
\usetikzlibrary{arrows}
\usepackage{fp}
\usepackage{rotating}
\usepackage{longtable}
\usepackage{mathtools}
\usepackage{centernot}
\usepackage{blkarray}
\usepackage{pifont}
\usepackage{supertabular}
\usepackage{caption}
\newcommand{\cmark}{\ding{51}}
\newcommand{\xmark}{\ding{55}}
\newcommand{\gmark}{\cellcolor{gray}}
\usepackage{colortbl}
\usepackage{chngcntr}

\numberwithin{equation}{section}
\newtheorem{theorem}{Theorem}[section]
\newtheorem{lemma}[theorem]{Lemma}

\newtheorem{corollary}[theorem]{Corollary}
\newtheorem{conjecture}[theorem]{Conjecture}

\theoremstyle{definition}
\newtheorem{definition}[theorem]{Definition} 
\newtheorem{remark}[theorem]{Remark}
\newtheorem{example}[theorem]{Example}

\usepackage{color} 
\definecolor{rossred}{rgb}{1.0,0.25,0.66}

\begin{document}

%%%%%%%%%%%%%%%%%%%%%%%%%%%%%%%%%%%%%%%%%%%%%%%%%%%%%%%%%%%%%%%% 
 
\title{The van der Waerden simplicial Complex and its Lefschetz properties}
\thanks{Version: \today}

\author[N. Ragunathan]{Naveena Ragunathan}
\address[N. Ragunathan]{Department of Mathematics and Statistics\\
McMaster University, Hamilton, ON, L8S 4L8, Canada}
\email{ragunatn@mcmaster.ca}
 
\author[A. Van Tuyl]{Adam Van Tuyl}
\address[A. Van Tuyl]{Department of Mathematics and Statistics\\
McMaster University, Hamilton, ON, L8S 4L8, Canada}
\email{vantuyla@mcmaster.ca}

\keywords{Simplicial complexes, van der Waerden simplicial complexes, Artinian rings, Graded rings, Weak Lefschetz Property, Strong Lefschetz Property, Pseudo-manifolds}
\subjclass{13E10, 13F55}

\begin{abstract}
The van der Waerden simplicial complex, denoted ${\tt vdw}(n,k)$,
is the simpicial complex whose facets correspond to the 
arithmetic progressions of length $k$ in the set $\{1,\ldots,n\}$.
We study the Lefschetz properties of the Artinian ring 
$A({\tt vdw}(n,k)) = K[x_1,\ldots,x_n]/(I_{{\tt vdw}(n,k)} + 
\langle x_1^2,\ldots,x_n^2\rangle)$
where  $I_{{\tt vdw}(n,k)}$ is the associated
Stanley--Reisner ideal. If $k=1,2$ or $n-1$, the ring 
$A({\tt vdw}(n,k))$ 
will have the
Weak Lefschetz Property for all $n > k$.  
When $k=3$, we classify the rings $A({\tt vdw}(n,3))$ that have the 
Weak Lefschetz Property when the characteristic is zero. We conjecture that $A({\tt vdw}(n,k))$ fails 
to have the Weak Lefschetz Property if $n \gg k \geq 3$ and $k$
odd. We also 
classify when ${\tt vdw}(n,k)$ is
a pseudo-manifold, which allows us to show that $A({\tt vdw}(n,k))$ satisfies the Weak Lefschetz Property in
some degrees by using a result of Dao and Nair.
\end{abstract}
 
\maketitle
%%%%%%%%%%%%%%%%%%%%%%%%%%%%%%%%%%%%%%%%%%%%%

\section{Introduction}

The study of the Strong and Weak Lefschetz Properties of 
graded Artinian rings is an eclectic mix of many areas of 
mathematics, including commutative algebra,
algebraic geometry, linear algebra,
combinatorics, and representation theory.
The origins of this topic can be traced back to work of 
Stanley \cite{S1980}, Watanabe \cite{W1985}, and Reid, Roberts and Roitman \cite{RRR1991}.  For introductions to
this area, see \cite{HMMNWW2008,MN2013}, and for a sample of
recent research, see \cite{AGFMN}.
In this paper, we consider the Lefschetz properties for a 
family of graded Artinian rings constructed from the 
van der Waerden simplicial complexes ${\tt vdw}(n,k)$,
a family of complexes introduced by
Ehrenborg, Govindaiah, Park, and Readdy\cite{EGPR2017} within the
context of combinatorics and number theory.

Let $R = K[x_1,\ldots,x_n]$ be a polynomial ring
over a field $K$ with characteristic denoted ${\rm char}(K)$, and suppose $I$ is a homogeneous ideal 
of $R$ such that $\sqrt{I} = 
\langle x_1,\ldots,x_n\rangle$.   The ring $A = R/I$ is 
a graded Artinian ring, and consequently, there exists
an integer $e$ such that $A = \bigoplus_{i=0}^e A_i$ where
each $A_i = (R/I)_i$, the $i$-th graded piece of $A$, is 
a finite dimensional vector space
over $K$.
The ring $A$ is said to have the  {\it Weak 
Lefschetz Property} (WLP) if for a general linear form
$\ell$, the multiplication 
map $$\times \ell: A_i \rightarrow A_{i+1}$$ 
has maximal rank, i.e., $\times \ell$ is 
either injective or surjective, for 
$i=0,\ldots,e-1$.  We say $A$ has the {\it WLP in degree $j$} if the map
$\times \ell: A_j \rightarrow A_{j+1}$ has maximal rank.
If the map $\times \ell^d: A_i \rightarrow A_{i+d}$
also has maximal rank for all $i,d \geq 0$, then
we say $A$ has the {\it Strong Lefschetz Property} (SLP).
One of the driving questions in the field is to
identify families of graded Artinian rings with either
the WLP or SLP. For example, it has long been conjectured
that all graded Artinian complete intersections have the SLP 
in characteristic zero \cite{RRR1991}.

Even when $I$ is a monomial ideal of $R$, determining
the WLP or SLP of $A = R/I$ is quite subtle and difficult; for
some work in this direction, see \cite{NAS,CJK2025,MMN,MNS}. Observe 
that for $R/I$ to be an Artinian ring when $I$ is a monomial ideal, 
for each $i=1,\ldots,n$, there
is a positive integer $a_i$ such that $x_i^{a_i}$ is a minimal generator of $I$. 
Thus, we can
write our monomial ideal as
$$I = J + \langle x_1^{a_1},\ldots,x_n^{a_n}\rangle$$
where $J$ is generated by the 
monomial generators of $I$ that are not pure powers. One
strategy to study the Lefschetz properties of $R/I$ is to leverage the 
properties of the monomial ideal $J$.  As an example of
this approach, Dao and Nair \cite{DN} 
deduced Lefschetz properties for $R/I$ under the extra assumption that
$J$ is the Stanley-Reisner ideal of a pseudo-manifold. 
A similar approach to study the associated simplicial
complex is found in  \cite{H2024,H2025}.
The papers \cite{CFHNVT,MNS,NT2024,T2021} apply the same
philosophy by assuming $J$ is
the edge ideal of a graph $G$ (a quadratic squarefree monomial ideal) and relate the WLP and SLP to properties of the graph $G$.

We use this approach to consider the
case that $J$ is the Stanley-Reisner
ideal of the {\it van der Waerden} simplicial complex. 
This complex, denoted ${\tt vdw}(n,k)$, 
is the simplicial complex on the vertex set $[n] = \{1,\ldots,n\}$ 
whose facets are the arithmetic
progressions of length $k$ in $[n]$. 
The name is inspired by van der Waerden's classical theorem \cite{vdw1927}
related to a colouring of the numbers $\{1,\ldots,n\}$ so that an
arithmetic progression of length $k$ is mono-coloured
(see \cite{BERG} for more).
The homotopy
type of these complexes 
were first studied by Ehrenborg, {\it et al.} \cite{EGPR2017}, 
while \cite{HVT2025,HVT} have developed some
of the combinatorial commutative algebra properties of ${\tt vdw}(n,k)$.

We expand our understanding of ${\tt vdw}(n,k)$ by 
studying the Lefschetz properties of
$$A({\tt vdw}(n,k)) := K[x_1,\ldots,x_n]/(I_{{\tt vdw}(n,k)}+ 
\langle x_1^2, \ldots, x_n^2 \rangle),$$
an Artinian  ring where $I_{{\tt vdw}(n,k)}$ is the 
Stanley-Reisner ideal of ${\tt vdw}(n,k)$.  We are able to
derive both positive and negative results.
In the positive direction, we prove the following results:
\begin{theorem} \label{maintheorem} 
Let $n > k\geq 1$ be integers and define $A({\tt vdw}(n,k))$ as above.
Then
\begin{enumerate}
    \item  $A({\tt vdw}(n,1))$ has the SLP for all $n \geq 2$
    if ${\rm char}(K) \neq 2$;
    \item $A({\tt vdw}(n,2))$ has the WLP for all $n \geq 3$
    if ${\rm char}(K) = 0$;
    \item $A({\tt vdw}(n,n-1))$ has the SLP for all $n \geq 2$ if 
    ${\rm char}(K) = 0$;
    \item $A({\tt vdw}(n,k))$ has the WLP in degrees $0$ and
    $1$ for all $n > k$ if ${\rm char}(K)=0$; and
    \item $A({\tt vdw}(n,k))$ has the WLP in degree $k$ if
    $\frac{n}{2} \leq k < n$ and $k \geq 3$ if ${\rm char}(K) =0$.
\end{enumerate}
\end{theorem}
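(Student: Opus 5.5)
The plan rests on the standard description of $A(\Delta)=K[x_1,\ldots,x_n]/(I_\Delta+\langle x_1^2,\ldots,x_n^2\rangle)$. Its degree $i$ piece has as a basis the squarefree monomials $x_F$ with $F\in\Delta$ and $|F|=i$, so $\dim A_i=f_{i-1}(\Delta)$. Because the ideal is monomial, the Lefschetz properties can be tested with $\ell=x_1+\cdots+x_n$ (the result of Migliore--Mir\'o-Roig--Nagel for monomial ideals). Then $\times\ell:A_i\to A_{i+1}$ sends $x_F$ to $\sum x_{F\cup\{v\}}$, summed over $v\notin F$ with $F\cup\{v\}\in\Delta$. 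So every map in question is a face-incidence matrix. I would handle the five parts in order, first identifying the facets of ${\tt vdw}(n,k)$ in each regime.

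\emph{Parts (1), (2) and (4).}

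For $k=1$ the facets are the singletons, so $A$ has Hilbert function $(1,n)$. Thus $\times\ell^d$ is nonzero from $A_0$ to $A_1$ and otherwise lands in the zero space, which gives the SLP trivially. I would still track where ${\rm char}(K)\neq 2$ is needed for the paper's conventions.

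For $k=2$ every pair $\{a,b\}$ is an arithmetic progression, so ${\tt vdw}(n,2)$ is the complete graph $K_n$ and the Hilbert function is $(1,n,\binom n2)$. The only nontrivial map is $A_1\to A_2$. Since $n\le\binom n2$ for $n\ge 3$, it must be injective. This map is the unsigned vertex--edge incidence matrix of $K_n$, which has rank $n$ in characteristic $\neq 2$ because $K_n$ is connected and non-bipartite.

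The same argument proves (4) for $k\ge 3$. The degree $0$ map is injective since $\ell\neq 0$. For degree $1$, the $1$-skeleton $G$ of ${\tt vdw}(n,k)$ contains the path $1,2,\ldots,n$ and the triangle on $\{1,2,3\}$ (inside the facet $\{1,\ldots,k\}$). So $G$ is connected, non-bipartite and has at least $n$ edges, and its incidence matrix has rank $n$. For $k=1$ the target $A_2$ is $0$.

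\emph{Part (3).}

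For $n\ge 4$, an arithmetic progression of length $n-1$ in $[n]$ must have difference $1$. Hence the facets are $\{1,\ldots,n-1\}$ and $\{2,\ldots,n\}$, and $I=\langle x_1x_n\rangle$; the case $n=3$ is covered by (2), and $n=2$ is trivial. Then
\[
A\cong B\otimes_K C,\qquad B=K[x_2,\ldots,x_{n-1}]/\langle x_i^2\rangle,\quad C=K[x_1,x_n]/\langle x_1^2,x_1x_n,x_n^2\rangle .
\]
Set $u=x_1+x_n$ and $v=x_1-x_n$. Then $uv=0$ in $C$, so as a $K[\ell]$-module
\[
A\cong \bigl(B\otimes K[u]/(u^2)\bigr)\oplus B(-1)v.
\]
The first summand is the Boolean algebra on $n-1$ variables and the second is the Boolean algebra on $n-2$ variables shifted by one. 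In characteristic $0$ both have the SLP with symmetric Hilbert functions (Stanley). Their Jordan strings run from $j$ to $n-1-j$ and from $j+1$ to $n-1-j$ respectively. Together these strings are nested, with starting points weakly increasing as endpoints weakly decrease. For such a family the Jordan type is the conjugate of the Hilbert function, which gives maximal rank of every $\times\ell^d$. Checking this nesting carefully is the main bookkeeping step.

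\emph{Part (5).}

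Read literally, $A_{k+1}=0$ because all faces have size at most $k$, so the degree $k$ map is trivially surjective. The substantive map, which I would also prove, is $A_{k-1}\to A_k$ (ridges to facets). When $k\ge n/2$, an arithmetic progression with difference $d$ needs $(k-1)d\le n-1$. This forces $d=1$, except for the single facet $\{1,3,\ldots,2k-1\}$ when $n=2k-1$.

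Surjectivity follows once each facet $F$ has a ridge $R\subset F$ contained in no other facet, since then $\ell\cdot x_R=x_F$. For an interval $F_a=\{a,\ldots,a+k-1\}$, remove an interior element. The remaining set spans $[a,a+k-1]$ and, for $k\ge 4$, still contains consecutive integers, so it lies only in $F_a$. For the odd facet, removing an interior element leaves a gap of $4$, so that ridge lies in no other facet.

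The main obstacle is the small case $k=3$, and likewise $n=2k-1$, where the removed-interior ridge such as $\{a,a+2\}$ can also lie in a difference-$2$ facet. There I would instead use the ridge $\{1,2\}$ for $F_1$ and $\{n-1,n\}$ for $F_{n-2}$. For the middle intervals I would order the facets and show the incidence matrix is triangular with respect to a suitable choice of ridges.
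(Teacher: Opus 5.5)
The proposal rests on an off-by-one misreading of the definition. In the paper, the facets of ${\tt vdw}(n,k)$ are $\{x_i,x_{i+d},\ldots,x_{i+kd}\}$. So $k$ counts jumps, every facet has $k+1$ vertices, and $\dim {\tt vdw}(n,k)=k$. You took facets to have $k$ vertices. The two warning signs you noticed are symptoms of this: the hypothesis ${\rm char}(K)\neq 2$ plays no role in your (1), and your (5) is vacuous.

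With the correct indexing, each part changes as follows.
\begin{itemize}
\item[(1)] ${\tt vdw}(n,1)$ is the complete graph $K_n$, with Hilbert function $(1,n,\binom{n}{2})$. So (1) needs exactly your ``$k=2$'' incidence-matrix argument, which is where ${\rm char}(K)\neq 2$ enters. It also needs $\ell^2=2\sum_{i<j}x_ix_j\neq 0$ to handle $\times\ell^2:A_0\to A_2$.
\item[(3)] ${\tt vdw}(n,n-1)$ has the single facet $\{x_1,\ldots,x_n\}$, so $A=K[x_1,\ldots,x_n]/\langle x_1^2,\ldots,x_n^2\rangle$ and (3) is literally Stanley's theorem. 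Your tensor-product and Jordan-string argument is correct, but it proves the SLP for the ring with $I=\langle x_1x_n\rangle$. That ring is $A({\tt vdw}(n,n-2))$ for $n\geq 4$, not the stated one.
\item[(5)] $A_{k+1}$ is spanned by the facet monomials, so the degree-$k$ map is the ridge-to-facet map and is the whole content.
\end{itemize}

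The substantive gap is (2). Here the facets are the three-term progressions and $A$ has Hilbert function $(1,n,f_1,f_2)$. Besides degree $1$, you must prove that $\times\ell:A_2\to A_3$ (edges to triangles) is surjective. Nothing in your proposal does this. A private-edge argument is also unavailable: in ${\tt vdw}(7,2)$, every edge of $\{x_3,x_4,x_5\}$ lies in another facet.

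The paper inducts on $n$. The $d=\lfloor (n-1)/2\rfloor$ new facets $\{x_{n-2i},x_{n-i},x_n\}$ are paired with the new edges $\{x_{n-i},x_n\}$. Old facets do not contain $x_n$, so the new-edge columns vanish on the old rows and the matrix is block lower triangular. The new $d\times d$ block is unitriangular, because $x_{n-j}x_n$ divides $x_{n-2i}x_{n-i}x_n$ only for $j\in\{i,2i\}$. Hence the rank increases by exactly $d$, the number of new rows. The triangular ordering you allude to at the end of (5) is precisely the idea needed here.

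Once re-indexed, the rest of your plan is sound. Part (4) is the paper's argument. You compute the incidence rank directly where the paper cites Dao--Nair. The triangle $\{x_1,x_2,x_3\}$ lies in $\{x_1,\ldots,x_{k+1}\}$ for $k\geq 2$, and $K_n$ handles $k=1$.

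Your private-ridge idea gives a cleaner proof of (5) than the paper's route via pseudo-manifolds and Dao--Nair:
\begin{itemize}
\item For $k\geq n/2$, a jump $a\geq 2$ would force $i+ka\geq 1+2k>n$, so all facets are intervals $\{x_a,\ldots,x_{a+k}\}$.
\item Deleting an interior vertex of such an interval $F$ leaves a ridge $R$ containing $x_a$ and $x_{a+k}$. This ridge lies in no other facet, so $\ell\cdot x_R=x_F$.
\item This gives surjectivity in every characteristic.
\end{itemize}
Your worry about difference-$2$ facets is an artifact of the misreading.
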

\noindent
Our proofs for Theorem \ref{maintheorem} (1)-(5) make 
use of a number of ingredients, including
results of Migliore--Mir\'{o}-Roig--Nagel 
\cite{MMN} and
Lundqvist--Nicklasson \cite{LN} 
that
allows us to check the WLP by only 
considering a special linear form, and
Dao--Nair's work \cite{DN} related to pseudo-manifolds. 

While Theorem \ref{maintheorem} implies $A({\tt vdw}(n,k))$ has the WLP for
many values of $n$ and $k$, the ring $A({\tt vdw}(7,3))$ is our 
first example in this family that fails to have the WLP.
In fact, if we fix $k=3$, we classify exactly when the Artinian ring $A({\tt vdw}(n,3))$
has the WLP:

\begin{theorem}
    Suppose ${\rm char}(K) =0$ and $n > 3$.
    Then $A({\tt vdw}(n,3))$ has the WLP if and only if 
    $n=4,5,6$.
\end{theorem}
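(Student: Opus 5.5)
The plan is to reduce the WLP for $A=A({\tt vdw}(n,3))$ to one multiplication map and then treat the two directions separately. By the result of Migliore--Mir\'{o}-Roig--Nagel for monomial ideals, it suffices to test the special form $\ell = x_1+\cdots+x_n$. Because $A$ is a quotient by squarefree monomials and all squares, $A_i$ has a basis of faces of ${\tt vdw}(n,3)$ of size $i$. Since the facets are the $3$-term arithmetic progressions, $A_i=0$ for $i\ge 4$, so the socle degree is $e=3$. The map $A_3\to A_4=0$ is trivially surjective, and degrees $0$ and $1$ are handled by Theorem~\ref{maintheorem}(4). Hence in every case the WLP is equivalent to the map $\times\ell : A_2\to A_3$ having maximal rank. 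This map sends a pair $\{a,b\}$ to the sum of the $3$-term progressions containing it.

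\textbf{Counting the faces.} Here $f_3$ is the number of $3$-APs in $[n]$. A pair $\{a,b\}$ with $d=b-a$ is a face iff at least one of the following holds:
\begin{itemize}
\item $a-d\ge 1$;
\item $b+d\le n$;
\item $d$ is even, so that the midpoint lies in $[n]$.
\end{itemize}
A direct count gives $f_2>f_3$ for every $n\ge 4$; for instance $n=7$ gives $f_2=21-4=17$ and $f_3=9$. Therefore maximal rank means surjectivity of $A_2\to A_3$. Equivalently, in characteristic zero, the transpose incidence map $\phi: K^{\{3\text{-APs}\}}\to K^{\{\text{pairs}\}}$ must be injective, where $\phi$ sends a progression to the sum of its three pairs.

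\textbf{The cases $n=4,5,6$.} These are finite checks of the rank of a small $0/1$ matrix, at most $6\times 13$ for $n=6$. For $n=4,5,6$ one could also invoke Theorem~\ref{maintheorem}(5) in degree $3$, but that case is trivial here anyway. The substance is showing injectivity of $\phi$ by hand. I would exploit ``leaf'' pairs: a pair such as $\{1,2\}$ lies in a unique $3$-AP, namely $\{1,2,3\}$, so any kernel vector vanishes on that progression. I would then peel off progressions inductively.

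\textbf{The case $n\ge 7$.} Failure amounts to exhibiting a nonzero signed combination $\sum c_P P$ of $3$-APs in which, for every pair, the coefficients of the progressions containing that pair sum to zero. For $n=7$ I would find such a vector by solving the $17\times 9$ linear system by hand, expecting it to be supported on progressions away from the leaf-peeling region. I would then try to embed that configuration in $[n]$ for all $n\ge 7$. The key point needed is that the new progressions created by enlarging $n$ impose no new conditions on the support. They do not, provided the configuration's pairs acquire no extra progressions, or the extra ones can be matched with further signed terms.

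The main obstacle is precisely this uniform kernel construction. Pairs with odd difference in the interior lie in exactly two progressions, $\{a-d,a,b\}$ and $\{a,b,b+d\}$, while pairs with even difference lie in three. So the embedded configuration must be balanced against the extra interior progressions that appear as $n$ grows. If a fixed configuration does not extend, I would first try the alternative route of a dimension count. I would show that the leaf-peeling forces many coefficients to vanish while leaving a subsystem with more unknowns than independent equations. Concretely, I would look for a translation-invariant or sign-alternating family of kernel vectors indexed by the middle of $[n]$.
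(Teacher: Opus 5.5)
There is a genuine gap, and it starts with the definition. In this paper $k$ counts jumps, so the facets of ${\tt vdw}(n,3)$ are the \emph{four}-term progressions $\{x_i,x_{i+j},x_{i+2j},x_{i+3j}\}$, not the three-term ones; what you have analysed is ${\tt vdw}(n,2)$. This has three consequences:
\begin{itemize}
\item $\dim {\tt vdw}(n,3)=3$ and $A_4\neq 0$, since it is spanned by the facets. So $\times\ell:A_3\to A_4$ is not trivial. For $n=4,5,6$ this is exactly the degree-$3$ statement covered by Theorem~\ref{maintheorem}(5), which you dismissed; it can also be checked directly.
\item Your counts are for the wrong complex. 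For $n=7$ one has $\dim_K A_2=18$ and $\dim_K A_3=17$, not $17$ and $9$.
\item The inequality does not go one way for all $n$. By Lemma~\ref{lem.f-vector-vdw(n,3)}, $\dim_K A_2\le \dim_K A_3$ for every $n\ge 8$. So for $n\ge 8$ maximal rank means \emph{injectivity} of $A_2\to A_3$. Failure must then be shown by a nonzero vector in the kernel of $M$ itself, indexed by pairs, not by a balanced signed combination of triples.
\end{itemize}

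Your $n\ge 7$ strategy also cannot succeed for the complex you set up. For three-term progressions, Theorem~\ref{KISTWO} shows $A_2\to A_3$ is surjective for every $n\ge 3$. The rows indexed by $\{x_{n-2i},x_{n-i},x_n\}$ form a unitriangular block against the columns $\{x_{n-i},x_n\}$, with zeros above them; this is essentially your leaf-peeling, and it goes through for every $n$. So the transpose is always injective, and the kernel vector you are hunting for does not exist: your ``main obstacle'' is an impossibility.

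The missing idea is the paper's explicit kernel vector for the correct complex. Give the pair $\{x_a,x_b\}$ the weight $(-1)^{k+1}2^k$, where $2^k$ is the exact power of $2$ dividing $b-a$. Every triple inside a facet with jump $j$ has pairwise differences $(j,j,2j)$ or $(j,2j,3j)$, whose $2$-adic valuations are $k,k,k+1$. Hence every row of $M$ gives $2(-1)^{k+1}2^k+(-1)^{k+2}2^{k+1}=0$, and this vector is nonzero in characteristic $0$.

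Combined with the counting lemma $f_1\le f_2$ for $n\ge 8$, this kills injectivity. That lemma is itself nontrivial: it needs a computer check for $8\le n\le 88$ and an induction beyond that using $2d-3e+5\le 0$. The case $n=7$, where $f_1=18>17=f_2$, needs a separate explicit vector in the kernel of $M^T$. Finally, $n=4,5,6$ are settled by direct rank computations of both maps $A_2\to A_3$ and $A_3\to A_4$.
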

\noindent
Computational evidence suggests that if $k \geq 3$ is odd, then
almost all rings $A({\tt vdw}(n,k))$ fail to have the WLP.  At the end of 
the paper, we make a 
conjecture (Conjecture \ref{conjecture}) to this effect.

Our paper is structured as follows.  In Section \ref{sec.background}
we recall the relevant background results. 
Section \ref{sec.familieswithLP} describes some families $A({\tt vdw}(n,k))$
that have either the SLP or WLP.
In Section \ref{sec.vdw(n,3)}, we
classify for which $n$ the Artinian ring  $A({\tt vdw}(n,3))$
has the WLP.  Section \ref{sec.pseudomanifold} classifies
when ${\tt vdw}(n,k)$ is a pseudo-manifold.
We combine this classification with a result of Dao-Nair \cite{DN} to show that $A({\tt vdw}(n,k))$ has the WLP in some degrees.
Section \ref{sec.futuredirections} focuses on our conjectures,
while Appendix \ref{appendix} and \ref{appendixB} provide
proofs of technical results used in
Section \ref{sec.vdw(n,3)}.

%%%%%%%%%%%%%%%%%%%%%%%%%%%%%%%%%%%%%%%%%%%%%%

\section{Background}
\label{sec.background}

Let $R = K[x_1,\ldots,x_n]$ be a
polynomial ring over an infinite field $K$.  At this point, 
we make no  additional assumptions
on the characteristic of $K$, although certain results will 
require the characteristic to be zero.  We continue to use the terminology 
defined in the introduction.

\subsection{Lefschetz Properties}
We review some of the needed results about the WLP and SLP.  
As noted in the introduction, an Artinian ring has the WLP if there is a general linear form $\ell$ such that
the map $\times \ell: A_i \rightarrow A_{i+1}$ has maximal
rank for all $i \geq 0$.  

Gaining access to this general form is sometimes difficult for
an arbitrary Artinian ring.  However, since the Artinian rings
we consider have the form $R/I$ where $I$ is a monomial ideal,
Migliore, Mir\'{o}-Roig, and
Nagel \cite[Proposition 2.2]{MMN} (which assumes the field
is infinite) and
Lundqvist and Nicklasson \cite[Proposition 4.3]{LN} (which 
does not assume any condition on the cardinality of the field),
showed that to check if $R/I$ has the WLP, it suffices
to check if the map given by multiplication by 
a specific linear form
has maximal rank.  Precisely, 

\begin{theorem}
\label{BASIC}
Suppose $I \subseteq R = K[x_1,\ldots,x_n]$
is a monomial ideal such that $A = R/I$ is an Artinian ring.
Then $A$ has the WLP if and only if 
the map $\times \ell: A_i \rightarrow A_{i+1}$
where
$$\ell = x_1 + \cdots + x_n$$
has maximal rank for all $i \geq 0$.
\end{theorem}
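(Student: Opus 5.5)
The plan is to exploit the torus action that a monomial ideal carries. For $\lambda = (\lambda_1,\ldots,\lambda_n) \in (K^*)^n$, let $\phi_\lambda$ be the graded $K$-algebra automorphism of $R$ with $x_i \mapsto \lambda_i x_i$. Every monomial is sent to a nonzero scalar multiple of itself, so $\phi_\lambda(I) = I$ whenever $I$ is a monomial ideal. Hence $\phi_\lambda$ descends to a graded automorphism $\bar\phi_\lambda$ of $A = R/I$ that preserves each $A_i$.

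For any linear form $\ell' = \sum a_i x_i$ with all $a_i \neq 0$, take $\lambda = (a_1,\ldots,a_n)$, so that $\phi_\lambda(\ell) = \ell'$ with $\ell = x_1 + \cdots + x_n$. Then on $A_i$ we have
$$\times \ell' = \bar\phi_\lambda \circ (\times \ell) \circ \bar\phi_\lambda^{-1},$$
since $\ell'\cdot \bar\phi_\lambda(f) = \bar\phi_\lambda(\ell f)$. Thus $\times\ell'$ and $\times\ell$ have the same rank in every degree. In other words, all linear forms with nonzero coefficients behave exactly like $\ell$.

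It remains to relate this to a general linear form. For each $i$, the rank of $\times a: A_i \to A_{i+1}$, viewed as a function of the coefficient vector $a \in K^n$, is lower semicontinuous. The locus where the rank is maximal possible, $\min(\dim A_i, \dim A_{i+1})$, is Zariski open; it is defined by the nonvanishing of some maximal minor in the monomial bases. The open set $U = \{a : \prod a_i \neq 0\}$ is nonempty and dense, because $K$ is infinite. A general $\ell'$ therefore lies in $U$ and so has the same rank as $\ell$ in every degree.

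If $\ell$ has maximal rank in all degrees, then every element of $U$ does, so a general linear form does and $A$ has the WLP. Conversely, suppose $A$ has the WLP. Then for each $i$ the maximal-rank locus is a nonempty open set. Since only finitely many degrees $i$ are involved, the intersection of these loci with $U$ is still nonempty. Any point $a$ of this intersection has maximal rank in all degrees, and since $\ell$ has the same ranks as $a$, the form $\ell$ has maximal rank in all degrees.

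There is no serious obstacle. The one point needing care is the meaning of "general" and the use of an infinite field, so that nonempty open sets intersect and the complement of the coordinate hyperplanes meets them; this is the standing hypothesis of the section. For an arbitrary field one would instead cite \cite[Proposition 4.3]{LN}, or pass to the infinite extension $\overline{K}$: ranks of matrices with entries in $K$ are unchanged by field extension, and the matrix of $\times\ell$ in the monomial basis has $0/1$ entries.
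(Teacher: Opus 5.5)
Your proof is correct. It is essentially the torus-rescaling argument behind \cite[Proposition 2.2]{MMN}, which the paper cites rather than reproves; your scalar-extension remark also validly handles the arbitrary-field version attributed to \cite[Proposition 4.3]{LN}, since the matrix of $\times\ell$ in the monomial basis has entries in $\{0,1\}$.
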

The next result was first shown by Stanley \cite{S1980}, using the 
Hard Lefschetz Theorem. Watanabe \cite{W1985}, and Reid, Roberts, and Roitman \cite{RRR1991}  gave alternative proofs to this
foundational result.  
\begin{theorem}
\label{STAN}
Let $n \geq 1$ be an integer, and suppose
$a_1,\ldots,a_n$ are positive integers with $a_i \geq 2$ for
$i=1,\ldots,n$.
If the characteristic of $K$ is zero,
then the Artinian ring 
\[{K[x_1, x_2, \ldots, x_n]}/{\langle x_{1}^{a_1}, x_{2}^{a_2}, \ldots ,x_{n}^{a_n}\rangle}\] 
   is a complete intersection that has the SLP.
\end{theorem}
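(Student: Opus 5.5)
The statement to prove is Theorem \ref{STAN}: the monomial complete intersection $A = K[x_1,\ldots,x_n]/\langle x_1^{a_1},\ldots,x_n^{a_n}\rangle$ has the SLP in characteristic zero. I would use the $\mathfrak{sl}_2$-representation approach, proceeding in three steps.

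\textbf{Step 1: reduce to one variable at a time.} Write $A \cong A^{(1)} \otimes_K \cdots \otimes_K A^{(n)}$ with $A^{(i)} = K[x_i]/\langle x_i^{a_i}\rangle$. Put $\ell = x_1+\cdots+x_n$. Under the tensor decomposition, multiplication by $\ell$ is $\sum_i 1\otimes\cdots\otimes (\times x_i)\otimes\cdots\otimes 1$. By the argument underlying Theorem \ref{BASIC} (a torus rescaling of variables), it suffices to show that powers of this particular $\ell$ have maximal rank. More precisely, that argument says a general linear form $\sum c_i x_i$ with all $c_i\neq 0$ is equivalent to $\ell$ via $x_i \mapsto c_i x_i$, which is an automorphism of $A$.

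\textbf{Step 2: build an $\mathfrak{sl}_2$-action.} On $A^{(i)}$, with basis $1,x_i,\ldots,x_i^{a_i-1}$, define:
\begin{itemize}
\item $E_i = \times x_i$;
\item $H_i$ acting on $x_i^j$ by the scalar $2j-(a_i-1)$;
\item $F_i$ with $F_i(x_i^j) = j(a_i-j)\,x_i^{j-1}$.
\end{itemize}
A direct check, which needs $\mathrm{char}(K)=0$ so that the coefficients $j(a_i-j)$ are nonzero, shows that these satisfy the $\mathfrak{sl}_2$ relations. This makes $A^{(i)}$ the irreducible representation of dimension $a_i$. Summing the operators ($E=\sum E_i$, and likewise for $H$ and $F$, each acting on its own tensor factor) gives an $\mathfrak{sl}_2$-representation on $A$ in which $E = \times\ell$. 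Moreover, $H$ acts on $A_d$ by the scalar $2d - \sum_i (a_i-1)$, so the grading is the weight grading shifted to be centered at half the socle degree $e=\sum_i(a_i-1)$.

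\textbf{Step 3: apply finite-dimensional $\mathfrak{sl}_2$ theory.} In characteristic zero, $A$ decomposes into irreducible representations. Each irreducible is a string $v, Ev, \ldots, E^mv$ that is symmetric about the weight $0$, i.e.\ about degree $e/2$. It follows that $E^{d}\colon A_i\to A_{i+d}$ is injective when $i+d\le e-i$ and surjective when $i+d\ge e-i$, because on each string $E^d$ is nonzero exactly in the symmetric range. Hence $\times\ell^d$ has maximal rank for all $i$ and $d$, which is the SLP. The complete-intersection assertion is immediate, since the ideal is generated by $n$ elements forming a regular sequence and the quotient is Artinian.

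The main obstacle is Step 3, specifically the claim that on each string $E^d$ is injective or surjective as a map between the degree-$i$ and degree-$(i+d)$ pieces summed over all strings. This needs the symmetry of strings about the same center, which is guaranteed by $H$ acting as the shifted degree. One must also verify that complete reducibility holds, which is Weyl's theorem in characteristic zero.
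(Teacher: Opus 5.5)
Your proof is correct. The paper does not prove this theorem itself: it quotes it, crediting Stanley (via the Hard Lefschetz Theorem) and citing alternative proofs by Watanabe and by Reid--Roberts--Roitman.

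\textbf{Comparison with Stanley's route.} Stanley identifies $A\otimes\mathbb{Q}$, with degrees doubled, with the cohomology ring of $\mathbb{P}^{a_1-1}\times\cdots\times\mathbb{P}^{a_n-1}$. Hard Lefschetz for the Kähler class $\sum x_i$ then gives maximal rank of every $\times\ell^d$ over $\mathbb{Q}$. Since these maps have integer matrices in the monomial basis, the ranks are the same over any field of characteristic zero.

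You instead build an explicit $\mathfrak{sl}_2$-action with $E=\times\ell$ and $H$ equal to the degree shifted by $-e$, then apply Weyl's theorem and the string structure of irreducibles. This is the standard algebraic (Clebsch--Gordan) argument, in the spirit of the alternative proofs the paper cites. Your degree bookkeeping in Step 3 is right: injectivity when $i+d\le e-i$ and surjectivity when $i+d\ge e-i$ both follow from the strings being centred at weight $0$.

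\textbf{What each route buys.} Stanley's argument is two lines, but only once one accepts a deep transcendental theorem. Yours is self-contained, works directly over any field of characteristic zero, and gives more: the Jordan type of $\times\ell$, namely the string lengths. It also shows that tensor products of algebras carrying such a graded $\mathfrak{sl}_2$-structure keep the SLP.

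\textbf{A small misattribution.} The commutation relations do not need $\mathrm{char}(K)=0$. For example, $j(a_i-j)-(j+1)(a_i-j-1)=2j-(a_i-1)$ is an integer identity, and the other relations are checked the same way, so they hold in every characteristic. Characteristic zero is used only in Step 3: for Weyl's complete reducibility, and for the fact that $E$ acts along each irreducible string with nonzero coefficients. You do flag Weyl's theorem at the end, so this is not a gap.

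Similarly, irreducibility of each factor $A^{(i)}$ is not needed. It suffices that $A$ is a finite-dimensional representation on which $H$ acts by $2d-e$ on $A_d$.
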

\noindent
We use this result for the special case that $a_1=\cdots=a_n =2$.  The
following remark shows that to check the WLP, we can
make an assumption on $A_i$ and $A_{i+1}$.

\begin{remark}\label{rmk:restricttononzero}
Suppose that we wish to show that $\times \ell:A_i \rightarrow A_{i+1}$ has maximal rank, and in addition, we also know that either
$A_i = 0$ or $A_{i+1} = 0$.  In either of these cases, the 
map $\times \ell$ automatically has maximal rank.  Thus, to 
verify if an Artinian ring $A$ has the WLP, we only need
to consider the maps $\times \ell: A_i \rightarrow A_{i+1}$ where
both $A_i$ and $A_{i+1}$ are non-zero.  Similarly, to check
if $A$ has the SLP, we can restrict to checking 
if all the maps $\times \ell^d:A_i \rightarrow A_{i+d}$ have maximal rank when $A_i$ and $A_{i+d}$ are non-zero.
\end{remark}
\subsection{Simplicial complexes and the van der Waerden simplicial complexes}
A \textit{simplicial complex} on a finite set of vertices 
$X =  \{x_{1}, x_{2}, ,\ldots,x_{n}\}$, denoted by $\Delta$, 
is a collection of subsets of $X$ 
such that: $(i)$ \{$x_{i}$\} $\in$ $\Delta$ for all $x_{i} \in X$, and
$(ii)$ if $S \in \Delta$ and $A \subseteq S$, then $A \in \Delta$.
Note that we often omit the empty set when listing out the elements of $\Delta$. While some authors sometimes omit $(i)$ when defining simplicial complexes, for our purpose we include this
requirement; this prevents us from considering 
degenerate cases.

The elements of $\Delta$ are called \textit{faces}. The faces that are maximal with respect to containment are called the \textit{facets} of $\Delta$. If $F_1, F_2, \ldots, F_s$ are the  facets of $\Delta$, then we write $\Delta = \langle F_1, F_2, \ldots, F_s \rangle $, and say $\Delta$ is generated by $F_1, F_2, \ldots, F_s$.  

The \textit{dimension} of a face $F$ $\in$ $\Delta$ is $\dim(F) = |F|-1$. Accordingly, the dimension of the empty set is $-1$. The \textit{dimension of $\Delta$}, denoted $\dim \Delta$, is the maximal dimension of all of its faces.
A simplicial complex is {\it pure} if all its facets have
the same dimension.
If $d = 
\dim \Delta$, then the \textit{$f$-vector} of $\Delta$ is the 
$(d+2)$-tuple $f(\Delta) = (f_{-1},f_0,f_1,\ldots,f_d)$ where
$f_i$ is the number of faces of dimension $i$ of $\Delta$.
We sometimes write $f_i(\Delta)$ for $f_i$ if we want to distinguish
the simplicial complex to which we are referring.
Note that $f_{-1} =1$.

The Stanley-Reisner ideal allows one to study $\Delta$
via commutative algebra.

\begin{definition}
Suppose that $\Delta$ is a simplicial complex on $X = \{x_1,\ldots,x_n\}$.
The \textit{Stanley--Reisner ideal} of $\Delta$ is the ideal generated by the non-faces of $\Delta$, that is
$$
I_{\Delta} = \langle x_{i_1}x_{i_2} \cdots x_{i_t} ~\vert~ 
\{x_{i_1}, x_{i_2}, \ldots, x_{i_t}\} \notin \Delta \rangle
$$
in the polynomial ring $R = K[x_1,\ldots,x_n]$.
\end{definition}

By construction, the Stanley-Reisner ideal
$I_\Delta$ is a square-free monomial ideal.   The process
can also be reversed; given a square-free monomial ideal
$I$, one can construct a simplicial complex.

For any simplicial complex $\Delta$ on $X = \{x_1,\ldots,x_n\}$, we
define the graded Artinian ring 
$$A(\Delta) = K[x_1,\ldots,x_n]/(I_\Delta+\langle x_1^2,\ldots,x_n^2\rangle).$$
Because $A(\Delta)$ is a standard graded Artinian ring,
then there exists a non-negative 
integer $e$ such that $A(\Delta) = \bigoplus_{i=0}^e A(\Delta)_i$,
where $A(\Delta)_i$ is the $i$-th graded piece
of $A(\Delta)$. In
fact, each $A(\Delta)_i$ is a $K$-vector space.  The next
result summarizes some well-known properties of these 
vector spaces.

\begin{theorem}\label{thm.propofA(Delta)}
Let $\Delta$ be a simplicial complex on $X = \{x_1,\ldots,x_n\}$
with $f$-vector $f(\Delta) = (f_{-1},f_0,\ldots,f_d)$.  Then
\begin{enumerate}
    \item $\dim_K A(\Delta)_i = f_{i-1}$ for $0 \leq i \leq d+1$, and 
    $\dim_K  A(\Delta)_i = 0$ otherwise.
    \item a $K$-basis for $A(\Delta)_i$ is 
    $$\{\overline{x_{j_1}\cdots x_{j_{i}}} ~|~ \{x_{j_1},\ldots,
    x_{j_{i}}\} \in \Delta \}$$
    where $\overline{x_{j_1}\cdots x_{j_{i}}}$ denotes the
    equivalence class of the monomial $x_{j_1}\cdots x_{j_{i}}$
    in $A(\Delta)$.
    \item $\times \ell:A(\Delta)_0  \rightarrow A(\Delta)_1$ has
    maximal rank, i.e., $A(\Delta)$ has the WLP in degree 0.
\end{enumerate}
\end{theorem}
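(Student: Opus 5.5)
The plan is to reduce everything to the combinatorics of squarefree monomials. Set $J = I_\Delta + \langle x_1^2,\ldots,x_n^2\rangle$. This is a monomial ideal, so $A(\Delta)_i$ is spanned by the images of the degree-$i$ monomials not lying in $J$. A monomial avoids $\langle x_1^2,\ldots,x_n^2\rangle$ exactly when it is squarefree, that is, of the form $x_{j_1}\cdots x_{j_i}$ with distinct indices. Such a squarefree monomial avoids $I_\Delta$ exactly when its support contains no non-face of $\Delta$. Since $\Delta$ is closed under subsets, this is the same as saying $\{x_{j_1},\ldots,x_{j_i}\}\in\Delta$.

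Because $J$ is a monomial ideal, the standard monomials not in $J$ form a $K$-basis of $R/J$ in each degree. This gives (2) immediately. For (1), the faces with $i$ elements have dimension $i-1$, so $\dim_K A(\Delta)_i = f_{i-1}$. For $0\le i\le d+1$ this number is defined; the case $i=0$ corresponds to the empty face, and $f_{-1}=1$ matches $A(\Delta)_0=K$. If $i>d+1$ there are no faces of size $i$, so $A(\Delta)_i=0$.

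For (3), apply (1) together with condition (i) in the definition of a simplicial complex, which guarantees $f_0=n$. Thus $A(\Delta)_0 = K\cdot \overline{1}$ and $A(\Delta)_1$ has basis $\overline{x_1},\ldots,\overline{x_n}$. Take any nonzero linear form $\ell=\sum c_jx_j$; by Theorem~\ref{BASIC} we may choose $\ell=x_1+\cdots+x_n$. Then $\times\ell$ sends $\overline{1}$ to $\overline{\ell}$, and $\overline{\ell}\ne 0$ because the $\overline{x_j}$ are linearly independent. So the map is injective from a one-dimensional space, hence has maximal rank.

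The only step needing care is the claim that standard monomials form a basis of a quotient by a monomial ideal. This is the usual fact that a monomial ideal is spanned as a vector space by the monomials it contains, so I expect no real obstacle here.
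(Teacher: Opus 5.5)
Your proof is correct. The paper gives no proof of this theorem and simply quotes it as well-known. Your standard-monomial argument is the usual justification: a monomial is nonzero modulo $I_\Delta+\langle x_1^2,\ldots,x_n^2\rangle$ exactly when it is squarefree and its support is a face of $\Delta$, and these monomials form a $K$-basis. That gives (1) and (2).

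One small cleanup in (3): invoking Theorem~\ref{BASIC} is unnecessary, and that theorem is stated for all degrees at once rather than degree by degree. Your observation that $\overline{\ell}\neq 0$ for every nonzero $\ell=\sum c_jx_j$ already shows that $\times\ell\colon A(\Delta)_0\to A(\Delta)_1$ is injective for a general $\ell$.
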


\noindent
Observe that Theorem \ref{thm.propofA(Delta)} (2) implies that there 
is a one-to-one correspondence between the faces of dimension
$i-1$ of $\Delta$ and the basis elements of  $A(\Delta)_i$.

\begin{example}\label{ex.lingalgsetup}
By using the basis of Theorem \ref{thm.propofA(Delta)},
checking whether $A(\Delta)$ has the WLP can be reframed
as a linear algebra question by Theorem \ref{BASIC}.
Since our paper exploits this idea,
we illustrate this approach via an example.  Consider the 
simplicial complex $\Delta$ with facets 
$$\Delta = \langle \{x_1,x_2,x_3,x_4\},\{x_2,x_3,x_4,x_5\}
,\{x_3,x_4,x_5,x_6\}\rangle.$$
Suppose we wish to determine if $A(\Delta)$ has the WLP in degree
two, i.e., we want to check that the map 
$\times \ell:A(\Delta)_2 \rightarrow A(\Delta)_3$ 
has maximal rank for a generic
choice of $\ell$.  By Theorem \ref{BASIC}, it
suffices to check that the map $\times \ell:A(\Delta)_2 \rightarrow A(\Delta)_3$
has maximal rank for the linear form
$\ell = x_1+\cdots+x_6$.  By Theorem \ref{thm.propofA(Delta)},
a $K$-basis for $A(\Delta)_2$ is
$$\{\overline{x_1x_2},
\overline{x_1x_3},
\overline{x_2x_3},
\overline{x_1x_4},
\overline{x_2x_4},
\overline{x_3x_4},
\overline{x_2x_5},
\overline{x_3x_5},
\overline{x_4x_5},
\overline{x_3x_6},
\overline{x_4x_6},\overline{x_5x_6}\},$$
and a $K$-basis for $A(\Delta)_3$ is
$$\{
\overline{x_1x_2x_3},
\overline{x_1x_2x_4},
\overline{x_1x_3x_4},
\overline{x_2x_3x_4},
\overline{x_2x_3x_5},
\overline{x_2x_4x_5},
\overline{x_3x_4x_5},
\overline{x_3x_4x_6},
\overline{x_3x_5x_6},
\overline{x_4x_5x_6}\}.$$
Note that basis elements of $A(\Delta)_2$ (respectively,
$A(\Delta)_3$), are in one-to-one correspondence with the 
faces of dimension one (respectively dimension two), of
$\Delta$.
Consequently, after fixing these bases, the map
$\times \ell:A(\Delta)_2 \rightarrow A(\Delta)_3$ corresponds
to matrix multiplication given by the matrix
\begin{center}
\begin{blockarray}{ccccccccccccc}
$x_1x_2$ & $x_1x_3$ & $x_2x_3$ & $x_1x_4$ & $x_2x_4$ & $x_3x_4$ & $x_2x_5$ & $x_3x_5$ & $x_4x_5$ & $x_3x_6$ & $x_4x_6$ & $x_5x_6$\\
\begin{block}{[cccccccccccc]c}
1 & 1 & 1 & 0 & 0 & 0 & 0 & 0 & 0 & 0 & 0 & 0&$x_1x_2x_3$\\
1 & 0 & 0 & 1 & 1 & 0 & 0 & 0 & 0 & 0 & 0 & 0&$x_1x_2x_4$\\
0 & 1 & 0 & 1 & 0 & 1 & 0 & 0 & 0 & 0 & 0 & 0&$x_1x_3x_4$\\
0 & 0 & 1 & 0 & 1 & 1 & 0 & 0 & 0 & 0 & 0 & 0&$x_2x_3x_4$\\
0 & 0 & 1 & 0 & 0 & 0 & 1 & 1 & 0 & 0 & 0 & 0&$x_2x_3x_5$\\
0 & 0 & 0 & 0 & 1 & 0 & 1 & 0 & 1 & 0 & 0 & 0& $x_2x_4x_5$\\
0 & 0 & 0 & 0 & 0 & 1 & 0 & 1 & 1 & 0 & 0 & 0&$x_3x_4x_5$\\
0 & 0 & 0 & 0 & 0 & 1 & 0 & 0 & 0 & 1 & 1 & 0&$x_3x_4x_6$\\
0 & 0 & 0 & 0 & 0 & 0 & 0 & 1 & 0 & 1 & 0 &1 &$x_3x_5x_6$\\
0 & 0 & 0 & 0 & 0 & 0 & 0 & 0 & 1 & 0 & 1 &1 & $x_4x_5x_6$\\
\end{block}
\end{blockarray}
\end{center}
We have indexed the columns by
the basis elements of $A(\Delta)_2$ and indexed the rows by
the basis elements of $A(\Delta)_3$, but to simplify our
notation, we have dropped the ``bar'' denoting the equivalence
class of the monomial.    As an example of how this matrix 
is constructed, if we consider the basis element $x_1x_2 \in A(\Delta)_2$,
it is sent to the element:
$$x_1x_2 \mapsto  (x_1+\cdots+x_6)(x_1x_2) =
x_1^2x_2 + x_1x_2^2+x_1x_2x_3+x_1x_2x_4+x_1x_2x_5 +x_1x_2x_6\in A(\Delta)_3.$$
But in $A(\Delta)_3$, $x_i^2 =0$, and $x_1x_2x_5 = 0$ and 
$x_1x_2x_6=0$ since
$\{x_1,x_2,x_5\}$ and $\{x_1,x_2,x_6\}$ are both not in $ \Delta$. So,
when we write $\ell \times (x_1x_2)$ in terms of the basis elements of
$A(\Delta)_3$, we have  $$x_1x_2 \mapsto 1\cdot x_1x_2x_3+1\cdot x_1x_2x_4.$$
The ones in the column indexed  by $x_1x_2$ then record how to write
the image of $x_1x_2$ in terms of the basis of $A(\Delta)_3$.

To determine whether or not $A(\Delta)$ has the WLP in degree two now reduces to checking
if the rank of the matrix $M$ given above is as large as possible, 
that is, we need to check if
$${\rm rank}(M) = \min\{\dim_K A(\Delta)_2 =12,\dim_K A(\Delta)_3=10\} = 10.$$
Standard techniques in linear algebra can now verify that the rank
of this matrix is indeed ten, so $A(\Delta)$ has the WLP in degree two.
\end{example}

The following result of Dao and Nair \cite{DN} provides a tool
to check for the WLP in degree one.  The $1$-skeleton $G(\Delta)$ of $\Delta$ is the graph whose 
edges are the one-dimensional faces of $\Delta$.
\begin{theorem}[{\cite[Theorem 3.3]{DN}}]
\label{GSKEL}
Suppose that the characteristic of $K$ is $0$.
    Let $\Delta$ be a simplicial complex, and let $G(\Delta)$ be  its 1-skeleton. 
        If $\dim_{K} A(\Delta)_{2} \geq \dim_K A(\Delta)_{1}$, then $A(\Delta)$ has the WLP in degree one if and only if $G(\Delta)$ has no bipartite components.
       
\end{theorem}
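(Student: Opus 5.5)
The plan is to reduce the statement to a linear-algebra fact about the unsigned incidence matrix of $G(\Delta)$, in the style of Example \ref{ex.lingalgsetup}.

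\emph{Step 1: identify the matrix.} Since $I_\Delta+\langle x_1^2,\ldots,x_n^2\rangle$ is a monomial ideal, Theorem \ref{BASIC} lets us take $\ell = x_1+\cdots+x_n$. By Theorem \ref{thm.propofA(Delta)}, $A(\Delta)_1$ has basis $\{\overline{x_v}\}$ indexed by the vertices, and $A(\Delta)_2$ has basis $\{\overline{x_ux_v}\}$ indexed by the edges of $G(\Delta)$. Computing as in Example \ref{ex.lingalgsetup}, we get
$$\ell\cdot \overline{x_v} = \sum_{\{u,v\}\in G(\Delta)} \overline{x_ux_v},$$
because $x_v^2=0$ and the non-edges vanish. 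Hence the matrix of $\times\ell: A(\Delta)_1\to A(\Delta)_2$ is the (unsigned) edge--vertex incidence matrix $M$ of $G(\Delta)$.

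\emph{Step 2: reduce to injectivity.} Under the hypothesis $\dim_K A(\Delta)_2\ge \dim_K A(\Delta)_1$, maximal rank means injective. So the WLP in degree one holds if and only if $\ker M=0$. A vector $c=(c_v)$ lies in $\ker M$ exactly when $c_u+c_v=0$ for every edge $\{u,v\}$. Since this condition splits over the connected components of $G(\Delta)$, $\ker M=0$ if and only if no component supports a nonzero solution.

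\emph{Step 3: analyse one component $C$.}

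\begin{itemize}
\item Suppose $C$ is bipartite with parts $P\sqcup Q$; this includes the case of an isolated vertex, where $Q=\emptyset$. Set $c_v=1$ for $v\in P$, $c_v=-1$ for $v\in Q$, and $c_v=0$ elsewhere. This is a nonzero kernel vector, so the WLP fails in degree one.
\item Suppose $C$ is not bipartite, so it contains an odd cycle $v_0,v_1,\ldots,v_{2m},v_0$. The relations along the cycle force $c_{v_i}=(-1)^ic_{v_0}$. Closing the cycle then gives $c_{v_0}=-c_{v_0}$, so $2c_{v_0}=0$. Since ${\rm char}(K)=0$, this gives $c_{v_0}=0$. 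Because $C$ is connected, the relation $c_u=-c_v$ propagates this zero to every vertex of $C$.
\end{itemize}

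Thus $\ker M=0$ exactly when $G(\Delta)$ has no bipartite components, which proves the equivalence.

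There is no serious obstacle in this argument. The only points needing care are:
\begin{itemize}
\item the characteristic assumption, which is used precisely to cancel the factor $2$ (the argument works whenever ${\rm char}(K)\ne 2$);
\item remembering that an isolated vertex counts as a bipartite component;
\item checking that the matrix really is the incidence matrix, i.e.\ that squares and non-faces vanish in $A(\Delta)$.
\end{itemize}
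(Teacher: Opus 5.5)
Your proof is correct. The paper itself gives no proof of this statement: it quotes it from Dao--Nair and uses it as a black box in Theorem~\ref{Deg1}. So your argument is a self-contained alternative rather than a reconstruction of anything in the paper.

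What you have written is an elementary proof of the injectivity case of the classical fact that, over a field with ${\rm char}(K)\neq 2$, the unsigned edge--vertex incidence matrix of a graph has rank equal to the number of vertices minus the number of bipartite components. Your identification of $\times\ell$ with that matrix is exactly right, and so is your treatment of isolated vertices as bipartite components.

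Compared with the citation, your route makes the role of the characteristic transparent. The hypothesis enters only to cancel the $2$ in $2c_{v_0}=0$. Hence the equivalence holds over any infinite field with ${\rm char}(K)\neq 2$, which sharpens the paper's remark that it holds in ``large enough'' characteristic. It also explains the characteristic-$2$ failure for $A({\tt vdw}(4,3))$ in the paper's example: there, constant vectors on each component lie in the kernel. For $n\geq 3$ it likewise recovers the characteristic-$\neq 2$ argument in Theorem~\ref{KISONE} as the special case $G(\Delta)=K_n$.

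One small point of rigor concerns Theorem~\ref{BASIC}. As stated in the paper, it is about the WLP in all degrees at once, whereas you need it in the single degree one. The degree-wise version holds by the same rescaling argument, and the paper uses it that way in Example~\ref{ex.lingalgsetup}. You can also bypass it entirely. For a general $\ell=\sum a_ix_i$ with all $a_i\neq 0$, the kernel equations are $a_uc_v+a_vc_u=0$ for each edge $\{u,v\}$. Substituting $d_v=c_v/a_v$ turns these into $d_u+d_v=0$, and your component-by-component analysis then applies verbatim.
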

\noindent
\begin{remark}
Dao--Nair \cite{DN} also comment that the above result holds if
the characteristic of $K$ is large enough.
\end{remark}

We now focus our attention on  $A(\Delta)$ when $\Delta$ is a van der Waerden simplicial complex, as first defined by 
Ehrenborg, {\it et al.} \cite{EGPR2017}.  We are primarily interested
in the Lefschetz properties of $A(\Delta)$ for this family.

\begin{definition}
    Fix integers $n>k \geq 1$. The 
    \textit{van der Waerden simplicial complex} on
    $X = \{x_1,\ldots,x_n\}$,
    denoted ${\tt vdw}(n,k)$,
    is the simplicial complex where each facet 
    $$\{x_i,x_{i+d},x_{i+2d},\ldots,x_{i+kd}\}$$ corresponds
    to an arithmetic progression $\{i,i+d,i+2d,\ldots,i+kd\}$ of 
    length $k$ on $\{1,\ldots,n\}$. 
\end{definition}

In the definition of ${\tt vdw}(n,k)$, the value of
$k$ refers to the number of steps, and thus each
facet has $k+1$ elements. So ${\tt vdw}(n,k)$ is pure with $\dim {\tt vdw}(n,k) = k$. One might find it easier to interpret the value of $k$ as the number of
``jumps".
Given a facet of ${\tt vdw}(n,k)$, say 
$F$ = {\{$x_{i}$, $x_{i+d}$,$\ldots$,$x_{i+kd}$}\}, 
we define $d$ to be the \textit{jump factor} of $F$, and denote it 
by ${\rm jump}(F)$.

\begin{example}
We construct the van der Waerden complex ${\tt vdw}(7,2)$.
If $n=7$ and $k=2$, then
$$\{\{1,2,3\},\{2,3,4\},\{3,4,5\},\{4,5,6\},
\{5,6,7\},\{1,3,5\},\{2,4,6\},\{3,5,7\},\{1,4,7\}\}$$
is the set of arithmetic progressions of length three (or
progressions with two ``jumps'')
on $\{1,\ldots,7\}$.  These progressions define the facets of
${\tt vdw}(7,2)$, namely
\begin{multline*}
{\tt vdw}(7,2) = \langle \{x_1,x_2,x_3\},
\{x_2,x_3,x_4\},\{x_3,x_4,x_5\},\{x_4,x_5,x_6\},
\{x_5,x_6,x_7\}, \\
\{x_1,x_3,x_5\},\{x_2,x_4,x_6\},\{x_3,x_5,x_7\},
\{x_1,x_4,x_7\}\rangle.
\end{multline*}
If we consider the facet $F = \{x_1,x_4,x_7\}$, then 
${\rm jump}(F) =3$, while the facet
$G = \{x_2,x_4,x_6\}$ has ${\rm jump}(G) = 2$.
\end{example}

%%%%%%%%%%%%%%%%%%%%%%%%%%%%%%%%%%%%%

\section{Families of $A({\tt vdw}(n,k))$ with the 
Strong and Weak Lefschetz Properties} 
\label{sec.familieswithLP}

In this section, we identify some families of rings
$A({\tt vdw}(n,k))$ that have either the SLP or the WLP.  Informally,
this will happen if either $k$ is small or large, when
compared to $n$.

We begin by showing that  
the ring  $A({\tt vdw}(n,k))$ always has the WLP in the 
first two degrees for any van der Waerden complex 
${\tt vdw}(n,k)$.

\begin{theorem} \label{Deg1}
Suppose that ${\rm char}(K) =0$.
For all integers $n > k \geq 1$, $A({\tt vdw}(n,k))$ has
the WLP in degree $0$ and $1$.
\end{theorem}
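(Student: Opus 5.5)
Degree $0$ is already covered by Theorem \ref{thm.propofA(Delta)} (3), so the plan concerns degree $1$, where I will apply Dao--Nair's Theorem \ref{GSKEL}. The task then splits into two parts. First, compare $\dim_K A({\tt vdw}(n,k))_2 = f_1$ with $\dim_K A({\tt vdw}(n,k))_1 = f_0 = n$. Second, show that the $1$-skeleton $G = G({\tt vdw}(n,k))$ has no bipartite components. Where the dimension inequality fails, I will argue directly instead.

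\textbf{The 1-skeleton.} Two vertices $x_a, x_b$ with $a<b$ span an edge exactly when $a$ and $b$ lie in a common arithmetic progression of length $k$ in $[n]$. This happens, for instance, when $b-a=d$ is a jump of some progression $\{i,i+d,\ldots,i+kd\}\subseteq[n]$ containing both. Consecutive integers $x_a,x_{a+1}$ are always joined, since a jump-$1$ progression of $k+1$ consecutive integers containing $a,a+1$ fits inside $[n]$ because $n\ge k+1$. So $G$ contains the path $x_1x_2\cdots x_n$ and is connected.

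\textbf{Case $k\ge 2$.} The facet $\{x_1,\ldots,x_{k+1}\}$ gives the triangle $x_1x_2x_3$. An odd cycle means the single component is not bipartite. For the dimension inequality, $G$ contains the path's $n-1$ edges plus at least the extra edge $x_1x_3$. More carefully, each $a\le n-2$ has the edge $x_ax_{a+2}$, because a consecutive window of length $k+1\ge 3$ containing $a,a+1,a+2$ fits in $[n]$. This gives $f_1\ge (n-1)+(n-2)=2n-3\ge n$ for $n\ge 3$, which always holds since $n>k\ge2$. Theorem \ref{GSKEL} then yields the WLP in degree $1$.

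\textbf{Case $k=1$.} Here ${\tt vdw}(n,1)$ is the complete graph $K_n$ as a $1$-dimensional complex, since any two points form a progression of length $1$. So $f_1=\binom n2$. For $n\ge 3$ we have $\binom n2\ge n$, and $K_n$ contains a triangle, so Theorem \ref{GSKEL} applies. The remaining case $n=2$ has $f_1=1<2=f_0$, so the map $\times\ell: A_1\to A_2$ must be surjective. This holds because $\ell\cdot x_1 = x_1x_2\neq 0$ and $A_2$ is spanned by $x_1x_2$.

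I expect the main (mild) obstacle to be the bookkeeping at small $n$ in the inequality $f_1\ge f_0$, i.e., ensuring the hypothesis of Theorem \ref{GSKEL} holds in every case. The argument above handles this by splitting off $k=1$, $n=2$ for a direct check.
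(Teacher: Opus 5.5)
Your proposal is correct and follows essentially the same route as the paper. Degree $0$ comes from Theorem~\ref{thm.propofA(Delta)}(3), and degree $1$ comes from Theorem~\ref{GSKEL}, using the connecting path $x_1x_2\cdots x_n$, the triangle on $x_1,x_2,x_3$, and the count $f_1\ge n$ for $n\ge 3$. The only differences are cosmetic: you split by $k$ rather than by $n$, and you settle ${\tt vdw}(2,1)$ by a direct surjectivity check where the paper invokes Stanley's Theorem~\ref{STAN}.
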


\begin{proof}
Let $A = A({\tt vdw}(n,k))$, and so by   
Theorem \ref{thm.propofA(Delta)} (3), $A$ has the WLP in 
degree 0.

For an arbitrary ${\tt vdw}(n,k)$, consider the following $n-k$ facets: 
\begin{eqnarray*}
F_t &=& \{x_t, x_{t+1}, \ldots, x_{t+k}\} ~~\mbox{for $t=1,\ldots,n-k$}. 
\end{eqnarray*}
From these facets, we see that $\{x_i,x_{i+1}\}$ is a one-dimensional
face of ${\tt vdw}(n,k)$ for $i=1,\ldots,n-1$.  These 
edges form a path in $G(\Delta)$ between
every vertex of the graph, so 
the $1$-skeleton $G(\Delta)$ is connected.

If $n=2$, then  $k=1$ and ${\tt vdw}(2,1)$ has exactly one facet,
namely $\{x_1,x_2\}$.  In this case $I_{{\tt vdw}(2,1)} =
\langle 0 \rangle$.  Consequently, 
$A({\tt vdw}(2,1)) = K[x_1,x_2]/\langle x_1^2,x_2^2 \rangle,$
and this ring has the WLP in general (and thus also specifically in degree 1) by Theorem \ref{STAN}.

On the other hand, if $n \geq 3$, then ${\tt vdw}(n,k)$
has at least $n$ faces of dimension one.  To see this,
we see that $F_1$ contributes the $k$ one-dimensional
faces 
$$\{x_1,x_{k+1}\},\{x_2,x_{k+1}\},\ldots,\{x_k,x_{k+1}\},$$
while $F_2,\ldots,F_{n-k}$ each contribute
$\{x_{k+i-1},x_{k+i}\}$ for $i=2,\ldots,n-k$.  This gives 
a total of $k+(n-k-1) = n-1$ faces of dimension one. 
But if $k=1$, then $\{x_1,x_3\}$ is also a facet of
${\tt vdw}(n,1)$, and if $k\geq 2$, then $\{x_1,x_3\}$ is also
a face of dimension one since it is a subset of $F_1$. 

 We thus have $\dim_K A_1 = n \leq \dim_K
A_2$.  The graph $G(\Delta)$ is connected,
and furthermore, $\{x_1,x_2\},\{x_1,x_3\}$, and $\{x_2,x_3\}$
are all faces of $\Delta$, so $G(\Delta)$ is not bipartite
since these three edges form a three cycle.  The conclusion
now follows from Theorem \ref{GSKEL}.
\end{proof}

\begin{example}
    The previous result fails if ${\rm char}(K) \neq 0$.  
    For example, consider $A({\tt vdw}(4,3))$. The matrix representation of the map $\times \ell: A_1 \rightarrow A_2$ is the following: 
\begin{center}
\begin{blockarray}{ccccc}
$x_1$ & $x_2$ & $x_3$ & $x_4$\\
\begin{block}{[cccc]c}
1 & 1 & 0 & 0 & $x_1x_2$\\
1 & 0 & 1 & 0 & $x_1x_3$\\
1 & 0 & 0 & 1 & $x_1x_4$\\
0 & 1 & 1 & 0 & $x_2x_3$\\
0 & 1 & 0 & 1 & $x_2x_4$\\
0 & 0 & 1 & 1 & $x_3x_4$\\
\end{block}
\end{blockarray}
\end{center}
This matrix has maximal rank 4 in characteristic 0. However, its rank drops to 3 in characteristic 2.
This drop in rank can be observed directly as the matrix has 15 possible $4 \times 4$ submatrices, of which 12 have determinant  $\pm 2$, and the remaining submatrices all have determinant 0.
\end{example}

We now have the tools to prove the following result.

\begin{theorem} \label{KISONE}
Suppose ${\rm char}(K) \neq 2$.
For all integers $n \geq 2$,  $A({\tt vdw}(n,1))$ has the SLP.
\end{theorem}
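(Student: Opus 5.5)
For ${\tt vdw}(n,1)$ the facets are all arithmetic progressions of length one, i.e.\ all pairs $\{x_i,x_j\}$ with $i<j$. Thus ${\tt vdw}(n,1)$ is the complete graph $K_n$ viewed as a $1$-dimensional complex. Its $f$-vector is $(1,n,\binom{n}{2})$, so by Theorem \ref{thm.propofA(Delta)} the Hilbert function of $A=A({\tt vdw}(n,1))$ is $(1,n,\binom n2)$ and $A_i=0$ for $i\ge 3$. By Remark \ref{rmk:restricttononzero}, the only maps we must check are those between nonzero pieces: $\times\ell:A_0\to A_1$, $\times\ell:A_1\to A_2$ and $\times\ell^2:A_0\to A_2$. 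We cannot cite Theorem \ref{BASIC} for the SLP, which is stated only for the WLP. Instead we exhibit the specific form $\ell=x_1+\cdots+x_n$: the maximal-rank conditions are nonvanishing of certain minors, so they are Zariski-open, and since $K$ is infinite one good $\ell$ implies a general $\ell$ works.

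The two maps out of $A_0$ are easy. First, $1\mapsto \ell\neq 0$, so $\times\ell:A_0\to A_1$ is injective. Second, $\ell^2=\sum_i x_i^2+2\sum_{i<j}x_ix_j=2\sum_{i<j}x_ix_j$ in $A$. Since ${\rm char}(K)\neq 2$ this is nonzero, so $\times\ell^2:A_0\to A_2$ is injective. This step is exactly where the hypothesis on the characteristic enters.

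The main step is $\times\ell:A_1\to A_2$. Here $x_i\mapsto \sum_{j\ne i}x_ix_j$, so the matrix is the $\binom n2\times n$ edge–vertex incidence matrix $M$ of $K_n$. For $n=2$ the target has dimension $1$, the matrix is $[1\ 1]$, and it is surjective. For $n\ge 3$ we have $\binom n2\ge n$, so we need ${\rm rank}(M)=n$, i.e.\ injectivity. I would compute $M^TM=(n-2)I+J$, where $J$ is the all-ones matrix. Its eigenvalues are $2n-2$ (on the all-ones vector) and $n-2$ (multiplicity $n-1$), so $\det(M^TM)=2(n-1)(n-2)^{n-1}$.

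This determinant may vanish in positive characteristic $p\mid (n-1)(n-2)$, so the argument must instead be done directly. I would avoid the Gram matrix and find an explicit $n\times n$ minor with determinant $\pm 2$. Take the rows $x_1x_2,\,x_2x_3,\dots,x_{n-1}x_n$ and $x_1x_3$; this is the incidence matrix of a connected unicyclic graph with an odd (triangle) cycle $1,2,3$. Such a matrix has determinant $\pm2$: peel off leaves by expanding along degree-one vertices, which leaves the $3\times 3$ triangle incidence matrix with determinant $2$. Hence the rank is $n$ whenever ${\rm char}(K)\neq 2$. This finding of the odd-cycle minor is the only real obstacle, and it resolves the characteristic issue uniformly. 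This completes the SLP.
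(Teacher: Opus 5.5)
Your proof is correct and follows the paper's route: you reduce to the three maps between nonzero graded pieces, observe that $\ell^2 = 2\sum_{i<j}x_ix_j \neq 0$, and exhibit an $n\times n$ minor of the incidence matrix of $K_n$ with determinant $\pm 2$, coming from a spanning unicyclic subgraph with a triangle. The paper uses the star at $x_1$ plus the edge $\{x_{n-1},x_n\}$ where you use the path plus the chord $\{x_1,x_3\}$, and your openness argument for replacing a general $\ell$ by $x_1+\cdots+x_n$ in the SLP setting, together with your separate treatment of $n=2$ (where the paper's chosen submatrix has a repeated row), are small but genuine tidy-ups of the paper's argument.
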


\begin{proof}
Let $A = A({\tt vdw}(n,1))$.
By Theorem \ref{thm.propofA(Delta)}, we have $A_i =0$ except
for $i=0,1,2$.  To show that $A$ has the SLP, by Remark \ref{rmk:restricttononzero} 
 the only maps we need to
check are: 1) $\times \ell: A_0 \rightarrow A_1$; 2) $\times \ell:A_1\rightarrow A_2$; and 3) $\times \ell^2: A_0 \rightarrow A_2$.  

By Theorem \ref{thm.propofA(Delta)}, case 1) holds for
any characteristic.  Case 2) holds by Theorem \ref{Deg1} if
we assume that ${\rm char}(K)=0$.  However, since we want our
result to hold for all ${\rm char}(K) \neq 2$, we give a different
proof that $\times \ell:A_1 \rightarrow A_2$ has maximal rank 
when ${\rm char}(K) \neq 2$. The facets of ${\rm vdw}(n,1)$ 
are $\{\{x_i,x_j\} ~|~ 1 \leq i < j \leq n \}$. So the matrix
representation of $\times\ell$ is an $\binom{n}{2} \times n$ matrix.
Consider the $n \times n$ submatrix whose rows are indexed
$\{x_1,x_2\},\ldots,\{x_1,x_n\}$ and $\{x_{n-1},x_n\}$. This
submatrix has the form
\begin{center}
\begin{blockarray}{ccccccc}
 $x_1$ & $x_2$ &  $x_3$ & $\cdots$  & $x_{n-1}$ & $x_n$ &\\
\begin{block}{[cccccc]c}
$1$ &   $1$    &  0&$\cdots$  & $0$ &    $0$  & $x_1x_2$\\
$1$ &   $0$    & 1 & $\cdots$  & $0$ & $0$  & $x_1x_3$\\
$\vdots$ & $\vdots$ & $\vdots$ & $\ddots$ & $\vdots$ & $\vdots$ \\
$1$ &   $0$    &$0$ &  $\cdots$  & $1$ & $0$& $x_1x_{n-1}$\\
$1$ &   $0$    &$0$ &  $\cdots$  & $0$ & $1$& $x_1x_n$\\
$0$ & $0$ & $0$ & $\cdots$ & $1$ & $1$  & $x_{n-1}x_{n}$\\
\end{block}
\end{blockarray}.
\end{center}
Using standard linear algebra techniques, this submatrix has 
rank $n$ when ${\rm char}(K) \neq 2$, so the 
matrix representation of $\times \ell$ has rank $n$. (Note 
that when 
${\rm char}(K) =2$,  this matrix does not have maximal rank
since there are two ones in each row, so the sum of all
the columns is the ${\bf 0}$ vector.)

For case 3), because $A_0 = K$, 
$\dim_K A_0 = 1$, and the map $\times \ell^2: A_0 \rightarrow A_2$
sends $1$ to $(x_1+\cdots+x_n)^2$ in $A_2$.  This is not
the zero map, so the map is injective, and hence this map also has maximal rank.  Thus $A$ has the SLP, as desired.
\end{proof}

\begin{example}
    It can be checked that $A({\tt vdw}(3,1))$ fails to have
    the SLP in characteristic two.  
\end{example}
When $k=2$, we can prove that $A({\tt vdw}(n,2))$ will always
have the WLP, although it may fail to have the SLP.   We first require
a lemma about the $f$-vector of ${\tt vdw}(n,2)$.

\begin{lemma}\label{lem.f-vector}
    For all integers $n\geq 3$, the $f$-vector 
    $f= (1,n,f_1,f_2)$ of ${\tt vdw}(n,2)$ satisfies
    $f_1 \geq f_2$.
\end{lemma}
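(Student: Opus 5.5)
Since ${\tt vdw}(n,2)$ is pure of dimension $2$, the number $f_2$ is the number of $3$-term arithmetic progressions in $[n]$. The plan is to count $f_2$ exactly, bound $f_1$ from below by an explicit family of edges, and compare. For $f_2$ we sum over the jump $d$: a progression $\{i,i+d,i+2d\}$ needs $i+2d\le n$, so there are $n-2d$ choices of $i$. Hence
$$f_2=\sum_{d=1}^{\lfloor (n-1)/2\rfloor}(n-2d).$$
Evaluating this by parity gives $f_2=m^2-m$ if $n=2m$, and $f_2=m^2$ if $n=2m+1$; in both cases $f_2=\lfloor (n-1)^2/4\rfloor$.

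For $f_1$ we characterize the edges directly. A pair $\{x_a,x_b\}$ with $a<b$ and $b-a=e$ is a face exactly when it lies in some $3$-term progression. The pair $\{a,b\}$ lies in the progression $\{a,b,b+e\}$ if $b+e\le n$, in $\{a-e,a,b\}$ if $a-e\ge 1$, and, when $e$ is even, in $\{a,a+e/2,b\}$. In particular every pair with even difference is an edge. A pair with odd difference $e$ is an edge whenever $2e\le n-1$, because then either $b+e\le n$ or $a-e\ge1$ holds. Thus $f_1$ is at least the number of pairs with even difference plus the number of pairs with odd difference $e\le (n-1)/2$.

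It then remains to compare this lower bound with $f_2$. The pairs with even difference $e=2d$ number $\sum_{d}(n-2d)$, summed over $2d\le n-1$, and this is exactly $f_2$. So the even-difference edges alone already give $f_1\ge f_2$. Indeed, the map sending the progression $\{i,i+d,i+2d\}$ to its outer edge $\{x_i,x_{i+2d}\}$ is a bijection onto these edges, which gives a clean injection from $2$-faces to $1$-faces.

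So the proof reduces to this injection. I expect no real obstacle; the only care needed is to confirm that the outer pair $\{i,i+2d\}$ determines the progression, since its middle term is forced to be the midpoint. The odd-difference edges, such as $\{x_i,x_{i+1}\}$, even show that the inequality is strict, although strictness is not required.
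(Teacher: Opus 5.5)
Your argument is correct, but it takes a different route from the paper. The paper argues by induction on $n$. The facets of ${\tt vdw}(n,2)$ not already in ${\tt vdw}(n-1,2)$ are the $d=\lfloor (n-1)/2\rfloor$ progressions $\{x_{n-2i},x_{n-i},x_n\}$, and each contributes the new edge $\{x_{n-i},x_n\}$. So $f_2$ grows by exactly $d$ while $f_1$ grows by at least $d$.

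Unwound, the paper's argument is also an injection from $2$-faces to $1$-faces: it sends $\{i,i+d,i+2d\}$ to its top two terms $\{x_{i+d},x_{i+2d}\}$, whereas you send it to its outer pair $\{x_i,x_{i+2d}\}$. Your version needs no induction and yields more. Its image is exactly the set of pairs with even difference, so $f_2=\lfloor (n-1)^2/4\rfloor$. Adding the $n-1$ edges $\{x_i,x_{i+1}\}$ then gives $f_1\ge f_2+n-1$. The paper's bookkeeping via new facets containing $x_n$ is chosen because the same set-up drives the inductive proof of Theorem~\ref{KISTWO}. It also drives the more delicate count for ${\tt vdw}(n,3)$ in Appendix~\ref{appendix}, where the inequality goes the other way.

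One side remark in your write-up is false and should be deleted or corrected. A pair with odd difference $e$ and $2e\le n-1$ need not be an edge. For $n=7$ and the pair $\{x_3,x_6\}$, you have $e=3$, but $3-3<1$ and $6+3>7$. It indeed lies in no facet of ${\tt vdw}(7,2)$. There $f_1=17$, below your claimed lower bound $9+10=19$. The correct condition guaranteeing that every pair of odd difference $e$ is an edge is $3e\le n$. Your conclusion uses only the even-difference edges (and, for strictness, the edges $\{x_i,x_{i+1}\}$), all of which genuinely are faces. So this error does not affect the proof.
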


\begin{proof}
    Set $\Delta(n) = {\tt vdw}(n,2)$.  We do induction on $n \geq 3$.
    For $n=3$, $\Delta(3) = \langle \{x_1,x_2,x_3\} \rangle$,
    so $f(\Delta(3)) = (1,3,3,1)$, and the result holds.

    We now assume $f_1(\Delta(n-1))\geq f_2(\Delta(n-1))$.  Set
    $d = \max\{j ~|~ 1 + 2j \leq n\}$.  Then $\Delta(n)$ has exactly
    $d$ facets that contain $x_n$, namely
    $$F_i = \{x_{n-2i},x_{n-i},x_n\}~~\mbox{for $i=1,\ldots,d$.}$$
    Since every facet of $\Delta(n-1)$ is also a facet of 
    $\Delta(n)$, we have 
    $f_2(\Delta(n)) = f_2(\Delta(n-1))+d.$
    These $d$ facets contribute at least $d$ one-dimensional faces
    to $\Delta(n)$; specifically,
    $$\{x_{n-1},x_n\},\{x_{n-2},x_n\},\ldots,\{x_{n-d},x_n\}.$$
    None of these one-dimensional faces belong to $\Delta(n-1)$,
    and thus $f_1(\Delta(n)) \geq f_1(\Delta(n-1))+d$.
    By induction, we then have
    $$f_1(\Delta(n)) \geq f_1(\Delta(n-1))+d \geq f_2(\Delta(n-1))+d = 
    f_2(\Delta(n)),$$
    as desired.
\end{proof}
We can prove that the following family of rings has the WLP.

\begin{theorem}
\label{KISTWO}
Suppose that ${\rm char}(K) =0$.
For all integers $n \geq 3$,  $A({\tt vdw}(n,2))$ has the WLP.
\end{theorem}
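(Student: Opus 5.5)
The plan is to check the three nontrivial maps one at a time. By Theorem \ref{thm.propofA(Delta)}, $A = A({\tt vdw}(n,2))$ has nonzero graded pieces only in degrees $0,1,2,3$, with $\dim_K A_i = f_{i-1}$. By Remark \ref{rmk:restricttononzero}, we only need to check the maps $\times\ell: A_0\to A_1$, $\times\ell:A_1\to A_2$ and $\times\ell:A_2\to A_3$, where $\ell = x_1+\cdots+x_n$ by Theorem \ref{BASIC}. The first two have maximal rank by Theorem \ref{Deg1}, since ${\rm char}(K)=0$. So everything reduces to degree two. By Lemma \ref{lem.f-vector} we have $\dim_K A_2 = f_1 \geq f_2 = \dim_K A_3$, so we must show that $\times\ell: A_2\to A_3$ is \emph{surjective}.

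To show surjectivity, I will use the matrix description of Example \ref{ex.lingalgsetup}. The image of an edge $\{x_a,x_b\}$ is the sum of the triangles (facets, since $\dim{\tt vdw}(n,2)=2$) containing it. It therefore suffices to exhibit, for each facet $F$, an element of $A_2$ mapping to $F$.

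I would induct on $n$, in the style of Lemma \ref{lem.f-vector}. The base case $n=3$ is immediate: $x_1x_2\mapsto x_1x_2x_3$. For the inductive step, take $\Delta(n-1)\subseteq\Delta(n)$. The new facets are $F_i=\{x_{n-2i},x_{n-i},x_n\}$ for $i=1,\ldots,d$. For each new facet, the edge $\{x_{n-2i},x_n\}$ lies in exactly one facet, namely $F_i$. Indeed, a 3-term progression containing $n-2i$ and $n$ with $n$ as its largest element must be $F_i$: if $n-2i$ and $n$ are the two endpoints, the middle term is forced to be $n-i$, and $n-2i$ cannot be the middle term because that would require a term larger than $n$. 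Hence $x_{n-2i}x_n\mapsto F_i$, and all new facets lie in the image.

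For the old facets $G\in\Delta(n-1)$, the induction hypothesis gives an element $\alpha$ of $A(\Delta(n-1))_2$ with $\ell'\alpha = G$ in $A(\Delta(n-1))_3$, where $\ell' = x_1+\cdots+x_{n-1}$. Now compute $\ell\alpha$ in $A(\Delta(n))$. Edges of $\Delta(n-1)$ avoid $x_n$, so the only extra terms are triangles of the form $e\cup\{x_n\}$ with $e$ an old edge, and every such triangle is one of the new facets $F_i$. Therefore $\ell\alpha = G + \sum c_i F_i$, and we subtract $\sum c_i\, x_{n-2i}x_n$ to get $G$ exactly. This proves surjectivity, so $A$ has the WLP.

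The main point needing care is the uniqueness claim about the edge $\{x_{n-2i},x_n\}$. The concern is that some other facet might contain both $x_{n-2i}$ and $x_n$, for instance one where $n-2i$ is the middle term. That is impossible here, since it would need a term exceeding $n$. Once this holds, the induction goes through.
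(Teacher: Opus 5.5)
Your reduction to surjectivity of $\times\ell:A_2\to A_3$, the induction on $n$, and your handling of the old facets are all correct. Every $2$-face of ${\tt vdw}(n,2)$ avoiding $x_n$ is a $3$-term progression in $[n-1]$, so $\ell\alpha=G+\sum c_iF_i$ as you say. The gap is the claim that the edge $\{x_{n-2i},x_n\}$ lies only in $F_i$. Your justification is false: the progression $\{n-4i,n-2i,n\}$ has $n-2i$ as its middle term and no term exceeding $n$. You have conflated this with $n$ being the middle term, which is the case that is actually impossible. This progression is the facet $F_{2i}$ whenever $4i\le n-1$, i.e.\ $2i\le d$. Already for $n=5$, $i=1$, the edge $\{x_3,x_5\}$ lies in both $\{x_3,x_4,x_5\}$ and $\{x_1,x_3,x_5\}$, so $\ell\cdot x_3x_5=F_1+F_2$, not $F_1$. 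So the step ``$x_{n-2i}x_n\mapsto F_i$'' fails for every $i\le d/2$. As written, you have not shown that the new facets lie in the image, and your old-facet step depends on exactly that.

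The repair is short. The correct formula is $\ell\cdot x_{n-2i}x_n=F_i+F_{2i}$ if $2i\le d$, and $\ell\cdot x_{n-2i}x_n=F_i$ if $2i>d$. Use descending induction on $i$ from $d$ to $1$: each $F_i$ lies in the image because $F_i=\ell\cdot x_{n-2i}x_n-F_{2i}$, and $F_{2i}$ was already obtained. Equivalently, as in the paper, use the edges $\{x_{n-i},x_n\}$. There $\ell\cdot x_{n-i}x_n=F_i$, plus $F_{i/2}$ when $i$ is even. This gives the lower-triangular block $M_3$ with ones on the diagonal, sitting in a block matrix whose upper-right block is zero, because old facets contain no edge through $x_n$. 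With this fix, your argument is the paper's proof phrased as surjectivity, exhibiting preimages, rather than as full row rank of the block matrix.
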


\begin{proof}
Let $A = A({\tt vdw}(n,2))$.  We know that $A_i \neq 0$
if and only if $i=0,1,2,3$ by Theorem \ref{thm.propofA(Delta)}.
By Theorem \ref{Deg1}, we know that $A$ has the 
WLP in degrees $0$ and $1$. By Remark \ref{rmk:restricttononzero}, 
it will suffice to show that $A$ has the WLP in degree two.

As in Example \ref{ex.lingalgsetup}, we can let
$\ell = x_1+\cdots+x_n$, and we can associate to
$\ell$ a matrix $M_\ell$ of size $(\dim_K A_3 \times \dim_K A_2).$  To complete the proof, we need
to verify that 
$${\rm rank}(M_\ell) = \min\{\dim_K A_2, \dim_K A_3\} = \dim_K A_3.$$  
The last equality follows from 
Lemma \ref{lem.f-vector} since $\dim_K A_2 = f_1 \geq  f_2 = \dim_K A_3$.

We proceed by induction on $n$.  
For $n=3$, the matrix representation of the map
$\times \ell:A_2 \rightarrow A_3$ has the form
\begin{center}
\begin{blockarray}{cccc}
$x_1x_2$ & $x_1x_3$ & $x_2x_3$\\
\begin{block}{[ccc]c}
1 & 1 & 1 & $x_1x_2x_3$\\
\end{block}
\end{blockarray}
\end{center}
It is immediate that this matrix has rank 1, the maximal rank.

We now prove the statement for $n$.  Our induction hypothesis
is that the matrix $M_{\ell'}$ that represents the map
$\times \ell':A({\tt vdw}(n-1,2))_2 \rightarrow A({\tt vdw}(n-1,2))_3$
has $${\rm rank}(M_{\ell'}) = \dim_K A({\tt vdw}(n-1,2))_3$$ where
$\ell' = x_1+\cdots+x_{n-1}$.

As noted within the proof of Lemma \ref{lem.f-vector}, ${\tt vdw}(n,2)$
has $d$ facets that contain $x_n$ where $d = \max\{j ~|~ 1 + 2j \leq n\}$.
In particular, these $d$ facets are
$$
F_i = \{x_{n-2i},x_{n-i},x_n\}~~\mbox{for $i=1,\ldots,d$.}
$$
The remaining facets of ${\tt vdw}(n,2)$ are also facets of 
${\tt vdw}(n-1,2)$. We order the basis
elements of $A({\tt vdw}(n,2))_3$ so that the last $d$ elements
correspond to the $d$ facets of ${\tt vdw}(n,2)$ that contain 
$x_n$.  We order the basis elements of $A({\tt vdw}(n,2))_2$
so that the last $d$ elements correspond to the one-dimensional
faces 
$$\{x_{n-d},x_n\},\{x_{n-d+1},x_n\},\ldots,\{x_{n-2},x_n\},\{x_{n-1},x_n\}.$$
Observe that we are making use of Theorem \ref{thm.propofA(Delta)} 
that says there is a one-to-one correspondence between the basis elements
of $A(\Delta)_i$ and the $(i-1)$-dimensional faces of $\Delta$.

With the bases ordered in this fashion, the matrix representation 
$M_\ell$ of the map 
$$\times \ell:A({\tt vdw}(n,2))_2 \rightarrow
A({\tt vdw}(n,2))_3$$ has the form
\begin{center}
\begin{blockarray}{ccccccc}
$x_1x_2$ &  $\cdots$ & $\cdots$ & $x_{n-d}x_n$ & $\cdots$  & $x_{n-1}x_n$ &\\
\begin{block}{[ccc|ccc]c}
  &       &           & 0 & $\cdots$   & 0      & $x_1x_2x_3$\\
  &    $M_1$  &       & $\vdots$ & $\ddots$ & $\vdots$  &  $\vdots$ \\
  &      &            & 0 & $\cdots$   & 0     & $x_{n-3}x_{n-2}x_{n-1}$\\
   \cline{1-6}
   &   &  &  &  &  & $x_{n-2d}x_{n-d}x_n$\\
   &  $M_2$  &  &  & $M_3$  &  & \vdots \\
   &   &  &  &  &  &  $x_{n-2}x_{n-1}x_n$\\
\end{block}
\end{blockarray}
\end{center}
where $M_1$, $M_2$, and $M_3$ are appropriately sized matrices.  

Observe
that $M_1$ has $M_{\ell'}$ as a submatrix.  Indeed, the indices
of the rows of $M_1$ correspond
to all the facets of ${\tt vdw}(n-1,2)$. Additionally, the 
indices of the columns of $M_1$ contain
all the faces of dimension one of ${\tt vdw}(n-1,2)$.  The new
facets containing $x_n$ may also contribute some new faces of dimension
one, so this is why $M_{\ell'}$ is a submatrix of $M_1$.  Thus $M_1$
is formed from $M_{\ell'}$ by adding additional columns.  By
adding new columns, to $M_{\ell'}$ to form $M_1$, we are not
changing the dimension of the row space.  So
$${\rm rank}(M_1) = {\rm rank}(M_{\ell'}) = \dim_K A({\tt vdw}(n-1,2))_3.$$

The matrix $M_3$ is a $d \times d$ matrix.  By 
multiplying $x_{n-i}x_n$ by $\ell$
for $i=1,\ldots,d$, 
and writing the product in terms of the basis of $A({\tt vdw}(n,3))_3$,
the matrix $M_3$ has 
the form
\begin{center}
\begin{blockarray}{cccccc}
 $x_{n-d}x_n$ & $x_{n-d+1}x_n$ &  $\cdots$  & $x_{n-2}x_n$ & $x_{n-1}x_n$ &\\
\begin{block}{[ccccc]c}
$1$ &   $0$    &  $\cdots$  & $0$ &    $0$  & $x_{n-2d}x_{n-d}x_{n}$\\
$\star$ &   $1$    &  $\cdots$  & $0$ & $0$  & $x_{n-2d+1}x_{n-d+1}x_{n}$\\
$\vdots$ & $\vdots$ & $\ddots$ & $\vdots$ & $\vdots$ & $\vdots$ \\
$\star$ &   $\star$    &  $\cdots$  & $1$ & 0 & $x_{n-4}x_{n-2}x_{n}$\\
$\star$ & $\star$ & $\cdots$ & $\star$ & 1  & $x_{n-2}x_{n-1}x_{n}$\\
\end{block}
\end{blockarray}
\end{center}
where $\star$ is either a $0$ or $1$.  From this description of
$M_3$, we see that ${\rm rank}(M_3) =d$, and consequently,
the row rank of the matrix consisting of only the last $d$ rows
of $M_{\ell}$ has rank $d$.

Because of the block matrix form of $M_\ell$, the last $d$ rows
cannot be in the row space of the first 
$\dim_K A({\tt vdw}(n-1,2))_3$ rows of
$M_\ell$.  Consequently,
$${\rm rank}(M_\ell) = {\rm rank}(M_\ell')+ d = 
\dim_K A({\tt vdw}((n-1,2))_3+d.$$
But $\dim_K A({\tt vdw}(n-1,2))_3+d = 
\dim_K A({\tt vdw}(n,2))_3$, so ${\rm rank}(M_\ell)$
has the desired rank.
\end{proof}

\begin{example}
    Unlike the case when $k=1,$ $A({\tt vdw}(n,2))$ does not always also have the SLP. In particular, $A({\tt vdw}(5,2))$ does not have the SLP  
    since the map $\times \ell^2:A_1 \rightarrow A_3$
    fails to have maximal rank. The associated matrix for this map is:
\[
\begin{blockarray}{ccccc c}
x_1 & x_2 & x_3 & x_4 & x_5 &\\
\begin{block}{[ccccc]c}
1 & 1 & 1 & 0 & 0 & x_1x_2x_3\\
1 & 0 & 1 & 0 & 1 & x_1x_3x_5\\
0 & 1 & 1 & 1 & 0 & x_2x_3x_4\\
0 & 0 & 1 & 1 & 1 & x_3x_4x_5\\
\end{block}
\end{blockarray}
\]
This matrix is not of maximal rank (it has rank three instead of the maximal rank four), and so $A({\tt vdw}(n,2))$ does not have the SLP.
\end{example}

We end this section with another family that
has the SLP.   This result is a consequence of Stanley's 
result (c.f. Theorem \ref{STAN}).

\begin{theorem} \label{NMINUS1}
   Suppose that ${\rm char}(K) =0$. 
   Then $A({\tt vdw}(n,n-1))$ has the SLP for all integers $n \geq 2$.
\end{theorem}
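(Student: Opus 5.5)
The plan is to identify ${\tt vdw}(n,n-1)$ explicitly and then reduce to Theorem \ref{STAN}. An arithmetic progression with $n-1$ jumps inside $\{1,\ldots,n\}$ has $n$ elements. It must therefore be all of $\{1,\ldots,n\}$, with starting point $i=1$ and jump factor $d=1$, since $1+(n-1)d \leq n$ forces $d=1$. So ${\tt vdw}(n,n-1) = \langle \{x_1,\ldots,x_n\}\rangle$ is the full simplex on $n$ vertices. Every subset of $X$ is a face, so there are no non-faces and $I_{{\tt vdw}(n,n-1)} = \langle 0 \rangle$. This is exactly the same observation used for ${\tt vdw}(2,1)$ in the proof of Theorem \ref{Deg1}.

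It follows that
$$A({\tt vdw}(n,n-1)) = K[x_1,\ldots,x_n]/\langle x_1^2,\ldots,x_n^2\rangle .$$
This is the monomial complete intersection of Theorem \ref{STAN} with $a_1=\cdots=a_n=2$. Since ${\rm char}(K)=0$, Theorem \ref{STAN} says it has the SLP, which completes the argument.

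No step here is a genuine obstacle. The only point needing care is verifying that $d=1$ is forced, i.e.\ that no other progressions of length $n-1$ exist, so that the complex really is a single simplex and the Stanley--Reisner ideal vanishes.
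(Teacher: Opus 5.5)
Your proposal is correct and follows the paper's proof essentially verbatim: the paper likewise observes that ${\tt vdw}(n,n-1)$ has the single facet $\{x_1,\ldots,x_n\}$, so $I_{{\tt vdw}(n,n-1)}=\langle 0\rangle$ and $A({\tt vdw}(n,n-1))=K[x_1,\ldots,x_n]/\langle x_1^2,\ldots,x_n^2\rangle$, and then applies Theorem \ref{STAN}. Your explicit check that $1+(n-1)d\leq n$ forces $d=1$ is a small justification the paper leaves implicit.
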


\begin{proof}
For all $n \geq 2$, the complex ${\tt vdw}(n,n-1)$ has only one facet,
namely $\{x_{1}, x_{2}, x_{3}, \ldots, x_{n}\}$.  Thus
$I_{{\tt vdw}(n,n-1)} = \langle 0 \rangle$, and hence
\[A({\tt vdw}(n,n-1)) = K[x_1,\ldots,x_n]/(\langle 0 \rangle + \langle x_{1}^2, x_{2}^2, \ldots, x_{n}^2\rangle).\]
The conclusion then follows from Theorem \ref{STAN}.
\end{proof}

\begin{remark}
Note that our assumption that $K$ is an infinite field of characteristic zero is an essential condition in order to apply Stanley's result.
\end{remark}

%%%%%%%%%%%%%%%%%%%%%%%%%%%%%%%%%%%%%%%%%%%%%%%%%%%%%%%%%%%%%%%%
\section{${\tt vdw}(n,3)$ and the failure of the Weak Lefschetz Property}
\label{sec.vdw(n,3)}

The previous section showed that if $k=1$ or $2$, then 
$A({\tt vdw}(n,k))$ always has the WLP.  However, when
we consider the case $k=3$, the main result of this section is
to show that $A({\tt vdw}(n,3))$ rarely has the WLP.  In fact,
we classify the $n$ for which this ring has the WLP
when the characteristic is $0$.

For this section, we will require the following lemma: 

\begin{lemma}\label{lem.f-vector-vdw(n,3)}
 For all integers $n\geq 8$, the $f$-vector 
    $f= (1,n,f_1,f_2,f_3)$ of ${\tt vdw}(n,3)$ satisfies
    $f_1 \leq f_2$.
\end{lemma}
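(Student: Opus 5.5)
The plan is to argue by induction on $n \geq 8$, in the same spirit as Lemma \ref{lem.f-vector}, but with the inequality reversed. Write $\Delta(n) = {\tt vdw}(n,3)$. Every face of $\Delta(n-1)$ is a face of $\Delta(n)$, so
$$f_1(\Delta(n)) = f_1(\Delta(n-1)) + e(n), \qquad f_2(\Delta(n)) = f_2(\Delta(n-1)) + t(n),$$
where $e(n)$ and $t(n)$ count the one- and two-dimensional faces of $\Delta(n)$ that contain $x_n$. It therefore suffices to show $e(n) \leq t(n)$ for all $n \geq 9$, together with the base case $f_1(\Delta(8)) \leq f_2(\Delta(8))$. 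The base case is a direct finite count of the facets of ${\tt vdw}(8,3)$ and their edges and triangles.

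\textbf{Counting $t(n)$ exactly.} Any arithmetic progression in $[n]$ that contains $n$ has $n$ as its largest term. So the facets containing $x_n$ are exactly $\{x_{n-3d},x_{n-2d},x_{n-d},x_n\}$ for $1 \leq d \leq m := \lfloor (n-1)/3 \rfloor$. Each such facet contributes three triangles through $x_n$, and these are recorded by their distances to $n$: $\{d,2d\}$, $\{d,3d\}$, $\{2d,3d\}$. I will check that these distance sets never coincide for different values of $d$ (compare the ratios $2$, $3$, $3/2$). This gives $t(n) = 3m$.

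\textbf{Bounding $e(n)$.} An edge $\{x_a,x_n\}$ with gap $g = n-a$ is a face exactly when there is a jump $d$ with $g/d \in \{1,2,3\}$ and $n - 3d \geq 1$. So the edge appears if and only if at least one of the following holds: $3 \mid g$; $2 \mid g$ and $3g/2 \leq n-1$; or $3g \leq n-1$. Every $g \leq m$ qualifies, and a gap $g > m$ can qualify only if it is divisible by $2$ or by $3$. Hence
$$e(n) \leq m + \#\{m < g \leq n-1 : 2 \mid g \text{ or } 3 \mid g\} \leq m + \tfrac{2}{3}(n-1-m) + 2.$$
Using $n - 1 - m \leq 2m+2$, this gives $e(n) \leq \tfrac{7}{3}m + \tfrac{10}{3}$. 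That is at most $3m$ once $m \geq 5$, i.e.\ for $n \geq 16$.

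\textbf{The obstacle.} The main difficulty is the range $9 \leq n \leq 15$, where the crude bound on $e(n)$ is too weak. There I will instead compute $e(n)$ exactly from the characterization above and compare it with $3\lfloor (n-1)/3 \rfloor$ directly. If some small $n$ has $e(n) > t(n)$, I will fall back to proving the inequality $f_1 \leq f_2$ itself for that $n$ by direct enumeration, which still suffices to continue the induction. I will then start the inductive step from the largest such verified value.
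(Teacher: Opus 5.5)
\textbf{The reduction to $e(n)\le t(n)$ rests on a false identity.} From $\Delta(n-1)\subseteq\Delta(n)$ you conclude that the faces of $\Delta(n)$ not in $\Delta(n-1)$ are exactly those containing $x_n$. That would require $\Delta(n-1)$ to be the restriction of $\Delta(n)$ to $\{x_1,\ldots,x_{n-1}\}$, and it is not.

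A new facet $\{x_{n-3j},x_{n-2j},x_{n-j},x_n\}$ also contains the triangle $\{x_{n-3j},x_{n-2j},x_{n-j}\}$. This triangle lies in $\Delta(n-1)$ only if $n-4j\geq 1$. So for $\lfloor (n-1)/4\rfloor<j\leq m$ it is a new face avoiding $x_n$, and some of its edges can be new too.

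This already happens for $n=8$. The facets of ${\tt vdw}(7,3)$ come from $\{1,2,3,4\},\{2,3,4,5\},\{3,4,5,6\},\{4,5,6,7\},\{1,3,5,7\}$. So $\{x_2,x_4,x_6\}$ and $\{x_2,x_6\}$ are faces of ${\tt vdw}(8,3)$ but not of ${\tt vdw}(7,3)$. Consistently with Table \ref{PLS}, from $n=7$ to $n=8$ the value $f_1$ grows by $6$ and $f_2$ grows by $7$, while your counts give only $e(8)=5$ and $t(8)=6$.

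Missing the extra triangles is harmless, since it only understates the $f_2$-increment. Missing the extra edges understates the $f_1$-increment, which is the wrong direction. So $e(n)\leq t(n)$ does not imply $f_1(\Delta(n))-f_1(\Delta(n-1))\leq f_2(\Delta(n))-f_2(\Delta(n-1))$. Your count $t(n)=3m$ and your characterization of the edges through $x_n$ are correct. Only this bookkeeping fails, but both your inductive step for $n\geq 16$ and your comparison of $e(n)$ with $t(n)$ for $9\le n\le 15$ depend on it.

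\textbf{How to repair it.} Put $q=\lfloor (n-1)/4\rfloor$. The new faces avoiding $x_n$ are exactly the $m-q$ triangles above, plus at most $3(m-q)$ of their edges. Hence $f_2$ increases by exactly $4m-q$, which is the paper's count.

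Combine your bound $e(n)\le\frac{7}{3}m+\frac{10}{3}$ with the trivial bound of three new edges per such $j$. The inductive step then needs $2m+5\leq 3q$, which is precisely the paper's Lemma \ref{lem.technicalidentity}. This fails, for instance, at $n=76$ ($m=25$, $q=18$). So your finite verification would have to reach far beyond $n=15$. The paper checks $8\le n\le 88$ by computer and bounds all new edges, through $x_n$ or not, by $6d-4e+5$.

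Alternatively, you could sharpen the count of new edges avoiding $x_n$:
\begin{itemize}
\item for $3\mid j$, none of the three edges of $\{x_{n-3j},x_{n-2j},x_{n-j}\}$ is new, because progressions with difference $j/3$ or $2j/3$ fit inside $[n-1]$;
\item for even $j$, only $\{x_{n-3j},x_{n-j}\}$ can be new, because difference $j/2$ covers the other two edges.
\end{itemize}
In that case you must redo the estimates and the range of base cases accordingly.
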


Because the proof of this lemma is technical, we 
have postponed  the necessary details to Appendix \ref{appendix}.  We require the hypothesis $n \geq 8$ 
since the conclusion is false if $4 \leq n \leq 7$,
as shown in Table \ref{PLS} at the end of this paper.

We now prove the main result of this section.

\begin{theorem}\label{thm.casek=3}
    Suppose that ${\rm char}(K) = 0$, and let $n > 3$ be an integer.
    Then $A({\tt vdw}(n,3))$ has
    the WLP if and only if $n=4,5,6$.  
\end{theorem}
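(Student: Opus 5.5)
The proof splits into two parts: a finite check for $n=4,\dots,7$, and a general argument for $n\geq 8$ showing failure.

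\emph{The cases $n\le 7$.} By Theorem \ref{BASIC} we may take $\ell=x_1+\cdots+x_n$. The nonzero graded pieces of $A=A({\tt vdw}(n,3))$ sit in degrees $0,\ldots,4$. By Theorem \ref{Deg1}, degrees $0$ and $1$ are already handled.

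\begin{itemize}
\item For $n=4$ the complex is a single simplex, so Theorem \ref{NMINUS1} gives even the SLP.
\item For $n=5,6$ the complexes are small. ${\tt vdw}(5,3)$ has facets $\{1,2,3,4\},\{2,3,4,5\}$. ${\tt vdw}(6,3)$ has facets $\{1,2,3,4\},\{2,3,4,5\},\{3,4,5,6\}$, which is exactly the complex of Example \ref{ex.lingalgsetup}. The $f$-vectors come from Table \ref{PLS}.
\item For these $n$ we write down the matrices of $\times\ell:A_2\to A_3$ and $A_3\to A_4$ and verify maximal rank directly. For $n=6$, degree two was already done in Example \ref{ex.lingalgsetup}. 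In degree three the matrix is a $3\times 10$ block, whose rank is visibly $3$ because each facet row has a distinguishing triangle.
\item For $n=7$ we exhibit the failure. The facets are those of ${\tt vdw}(6,3)$ together with $\{4,5,6,7\}$ and $\{1,3,5,7\}$. The table shows $f_1>f_2$ there, so injectivity is impossible and we need surjectivity of $A_2\to A_3$ (or $A_3\to A_4$). We then find an explicit nonzero functional on $A_3$ (a vector in the left kernel) that kills the image. Equivalently, we compute the rank and show it is below $f_2$.
\end{itemize}

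\emph{The case $n\ge 8$.} By Lemma \ref{lem.f-vector-vdw(n,3)}, $\dim A_2=f_1\le f_2=\dim A_3$, so the WLP in degree two requires $\times\ell:A_2\to A_3$ to be injective. The plan is to produce an explicit nonzero element of $A_2$ killed by $\ell$, built from a configuration that appears in every ${\tt vdw}(n,3)$ with $n\ge 8$.

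The natural candidate comes from the bipartite/sign structure. Take an alternating sum $\sum \pm x_ax_b$ over the edges of a suitable set of $2$-faces such that every triangle $\{a,b,c\}\in{\tt vdw}(n,3)$ receives cancelling contributions. Coefficients in $A_3$ are $\sum_{e\subset T} c_e$ over edges $e$ of triangle $T$, so we need a vector $c$ on edges with $\sum_{e\subset T}c_e=0$ for every triangle $T$, and with $c_e\,x_e x_v=0$ automatically whenever $e\cup\{v\}\notin\Delta$.

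Edges lying in only one or two triangles give the most freedom. The long-jump edges are the natural choice here: edges $\{i,i+3d\}$ and $\{i,i+d\},\{i+d,i+2d\}$ coming from facets of large jump $d$ near $n/3$, which lie in few triangles.

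We would try first: for a facet $F=\{a,a+d,a+2d,a+3d\}$ whose edges lie in no other facet, put weights $+1$ on the two disjoint edges $\{a,a+d\},\{a+2d,a+3d\}$, weight $-1$ on $\{a,a+3d\}$ and $\{a+d,a+2d\}$, and $0$ elsewhere. Each of the four triangles of $F$ contains one $+1$ edge and one $-1$ edge... However, the third edge must be a diagonal, so we also give the diagonals $\{a,a+2d\},\{a+d,a+3d\}$ suitable weights.

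Solving the $4$ equations in $6$ unknowns gives a nonzero solution. One such solution is $+1$ on the pairs of opposite edges $\{a,a+d\},\{a+2d,a+3d\}$, $-1$ on $\{a,a+2d\},\{a+d,a+3d\}$, and $0$ on the rest; each triangle then contains exactly one edge of each sign or none.

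The main obstacle is that the diagonals and short edges of $F$ are shared with other facets, for example the edge $\{a,a+2d\}$ lies in the progression of jump $2d$. So for $n\ge 8$ we must choose $F$ (e.g.\ jump $d=\lfloor (n-1)/3\rfloor$), or combine several facets, so that all the extra triangles through the supported edges also cancel. This combinatorial verification is the technical core, and it is where I expect the appendix-style case analysis to be needed.
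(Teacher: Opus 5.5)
\textbf{There is a gap in the case $n\ge 8$.} Your treatment of $n\le 7$ follows the paper: Theorem \ref{NMINUS1} for $n=4$, direct rank checks for $n=5,6$, and for $n=7$ a nonzero vector in the kernel of $M^T$, which the paper writes down explicitly. For $n\ge 8$ you correctly use Lemma \ref{lem.f-vector-vdw(n,3)} to reduce to showing that $\times\ell:A_2\to A_3$ is not injective. But the step that actually proves failure for $n\ge 8$, namely exhibiting a nonzero ${\bf c}$ with $M{\bf c}={\bf 0}$, is left undone, and your candidate does not work in general.

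A triangle not contained in $F$ contains at most one edge of $F$, so its entry in $M{\bf c}$ is just the weight of that edge. Hence your vector lies in the kernel only if none of the edges $\{a,a+d\},\{a+2d,a+3d\},\{a,a+2d\},\{a+d,a+3d\}$ lies in a facet other than $F$. With $d=\lfloor (n-1)/3\rfloor$ this fails whenever $d$ is even or divisible by $3$, in particular for every $8\le n\le 15$. In that case the edge $\{a,a+d\}$ also lies in the triangle $\{a,a+d/2,a+d\}$ or $\{a,a+d/3,a+d\}$, whose other two edges have weight $0$, so that entry of $M{\bf c}$ is $1$. For $n=8$ a direct check shows that no nonzero kernel vector is supported on the edges of a single facet at all. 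So a genuinely non-local construction is needed there, and the proposal does not supply one.

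\textbf{The missing idea.} The paper uses a global, nowhere-zero weighting that depends only on the length of an edge. Give $\{a,b\}$ the weight $(-1)^{v+1}2^{v}$, where $2^{v}$ is the exact power of $2$ dividing $b-a$. Every $2$-face of ${\tt vdw}(n,3)$ is a $3$-subset of some $\{i,i+j,i+2j,i+3j\}$, so its side lengths are $j,j,2j$ or $j,2j,3j$. In both cases the $2$-adic exponents are $v,v,v+1$, where $2^v$ is the exact power of $2$ dividing $j$. The row sum is then $2(-1)^{v+1}2^{v}+(-1)^{v+2}2^{v+1}=0$. This kills every row at once, for every $n$, with no case analysis.

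\textbf{Your local idea can be completed for large $n$.} Suppose $\gcd(d,6)=1$ and $1\le a\le d$ with $a+3d\le n<a+4d$. Then the four supported edges lie in no facet other than $F$. Such $a,d$ exist for all $n\ge 16$: for example $d=5$ for $16\le n\le 24$, $d=7$ for $22\le n\le 34$, $d=11$ for $34\le n\le 54$, and so on. But $8\le n\le 15$ would still need separate treatment, which is exactly what the $2$-adic vector avoids.
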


\begin{proof}
    If $A = A({\tt vdw}(n,3))$, then $A_i = 0$ for $i \geq 5$.  
    By Theorem \ref{Deg1}, the map $A$ has the WLP
    in degrees $0$ and $1$.  So, by Remark \ref{rmk:restricttononzero}, we only need to consider
    the maps
    $$\times \ell:A_2 \rightarrow A_3 ~~~\mbox{and} \times \ell:A_3 \rightarrow A_4.$$

    If $n=4$, then $A({\tt vdw}(4,3))$ has the WLP (in fact, the SLP), by
    Theorem \ref{NMINUS1}.  
For $n=5$ and $6$, we can apply the strategy of Example 
    \ref{ex.lingalgsetup} to check that the matrix representation of each
    of these maps has the maximum rank.  For completeness, the 
    matrices and these routine calculations 
    are included in Appendix \ref{appendixB}.

For $n \geq 7$, we will show that $A({\tt vdw}(n,3))$ fails to have
the WLP in degree two.  We need to treat the case $n=7$ separately 
because of Lemma \ref{lem.f-vector-vdw(n,3)}.  Specifically,
for $n \geq 8$, we will exploit the fact that $f_1({\tt vdw}(n,3)) \leq
f_2({\tt vdw}(n,3))$.  When $n=7$, we will require a different 
argument since
$f_1({\tt vdw}(7,3)) = 18 > 17 = f_2({\tt vdw}(7,3))$.

When $n=7$, the matrix representation $M$ of the map $\times \ell:A_2 \rightarrow A_3$ with respect to the basis of Theorem
\ref{thm.propofA(Delta)} where $\ell = x_1+ \cdots+ x_n$ is given by
\begin{center}
\resizebox{\textwidth}{!}
{
\begin{blockarray}{ccccccccccccccccccc}
 $x_1x_2$ & $x_1x_3$ & $x_1x_4$ &$ x_1x_5$ & $x_1x_7$ & $x_2x_3$ & $x_2x_4$ & $x_2x_5$ & $x_3x_4$ & $x_3x_5$ & $x_3x_6$ & $x_3x_7$ & $x_4x_5$ & $x_4x_6$ & $x_4x_7$ & $x_5x_6$ & $x_5x_7$ & $x_6x_7$&\\
\begin{block}{[cccccccccccccccccc]c}
1 & 1 & 0 & 0 & 0 & 1 & 0 & 0 & 0 & 0 & 0 & 0 & 0 & 0 & 0 & 0 & 0 & 0 & $x_1x_2x_3$\\
1 & 0 & 1 & 0 & 0 & 0 & 1 & 0 & 0 & 0 & 0 & 0 & 0 & 0 & 0 & 0 & 0 & 0 & $x_1x_2x_4$\\
0 & 1 & 1 & 0 & 0 & 0 & 0 & 0 & 1 & 0 & 0 & 0 & 0 & 0 & 0 & 0 & 0 & 0 & $x_1x_3x_4$\\
0 & 1 & 0 & 1 & 0 & 0 & 0 & 0 & 0 & 1 & 0 & 0 & 0 & 0 & 0 & 0 & 0 & 0 & $x_1x_3x_5$\\
0 & 1 & 0 & 0 & 1 & 0 & 0 & 0 & 0 & 0 & 0 & 1 & 0 & 0 & 0 & 0 & 0 & 0 & $x_1x_3x_7$\\
0 & 0 & 0 & 1 & 1 & 0 & 0 & 0 & 0 & 0 & 0 & 0 & 0 & 0 & 0 & 0 & 1 & 0 & $x_1x_5x_7$\\
0 & 0 & 0 & 0 & 0 & 1 & 1 & 0 & 1 & 0 & 0 & 0 & 0 & 0 & 0 & 0 & 0 & 0 & $x_2x_3x_4$\\
0 & 0 & 0 & 0 & 0 & 1 & 0 & 1 & 0 & 1 & 0 & 0 & 0 & 0 & 0 & 0 & 0 & 0 & $x_2x_3x_5$\\
0 & 0 & 0 & 0 & 0 & 0 & 1 & 1 & 0 & 0 & 0 & 0 & 1 & 0 & 0 & 0 & 0 & 0 & $x_2x_4x_5$\\
0 & 0 & 0 & 0 & 0 & 0 & 0 & 0 & 1 & 1 & 0 & 0 & 1 & 0 & 0 & 0 & 0 & 0 & $x_3x_4x_5$\\
0 & 0 & 0 & 0 & 0 & 0 & 0 & 0 & 1 & 0 & 1 & 0 & 0 & 1 & 0 & 0 & 0 & 0 & $x_3x_4x_6$\\
0 & 0 & 0 & 0 & 0 & 0 & 0 & 0 & 0 & 1 & 1 & 0 & 0 & 0 & 0 & 1 & 0 & 0 & $x_3x_5x_6$\\
0 & 0 & 0 & 0 & 0 & 0 & 0 & 0 & 0 & 1 & 0 & 1 & 0 & 0 & 0 & 0 & 1 & 0 & $x_3x_5x_7$\\
0 & 0 & 0 & 0 & 0 & 0 & 0 & 0 & 0 & 0 & 0 & 0 & 1 & 1 & 0 & 1 & 0 & 0 & $x_4x_5x_6$\\
0 & 0 & 0 & 0 & 0 & 0 & 0 & 0 & 0 & 0 & 0 & 0 & 1 & 0 & 1 & 0 & 1 & 0 & $x_4x_5x_7$\\
0 & 0 & 0 & 0 & 0 & 0 & 0 & 0 & 0 & 0 & 0 & 0 & 0 & 1 & 1 & 0 & 0 & 1 & $x_4x_6x_7$\\
0 & 0 & 0 & 0 & 0 & 0 & 0 & 0 & 0 & 0 & 0 & 0 & 0 & 0 & 0 & 1 & 1 & 1 & $x_5x_6x_7$\\
\end{block}
\end{blockarray}
}
\end{center}
Now set $x^T$ to  be the $17 \times 1$ column vector that is formed by taking the
transpose of the vector 
\setcounter{MaxMatrixCols}{20}
\[
x = \begin{bmatrix}
-1 & 1 & -1 & 1 & 1 & -1 & 0 & 1 & -1 & 0 & 1 & -1 & -1 & 0 & 1 & -1 & 1
\end{bmatrix}.
\]
The vector $x^T$ is in the kernel of $M^T$, the transpose of the 
matrix given above.  The matrix $M^T$ describes a map
$\varphi:A_3 \rightarrow A_2$.  
By the rank-nullity theorem, $17 = \dim_K A_3  = {\rm rank}(M^T) + 
\dim_K {\rm Nul}(M^T)$. Since $\dim_K {\rm Nul}(M^T) \geq 1$,
this gives ${\rm rank}(M^T) \leq 16$.  Since ${\rm rank}(M) = 
{\rm  rank}(M^T)$, this now shows that $A({\tt vdw}(7,3))$ does not have
the WLP in degree 2 since 
$${\rm rank}(M) \leq 16 < \min\{\dim_K A_2 = 18, \dim_K A_3 =17\} = 17.$$

We now suppose that $n \geq 8$.   We will show that $A = A({\tt vdw}(n,3))$ always fails to have the WLP in degree two, i.e.,
the map $\times \ell:A_2 \rightarrow A_3$ fails to have maximal
rank.  Because $\dim_K A_2 = f_1 \leq f_2 = \dim_k A_3$ for all $n \geq 8$
by Lemma \ref{lem.f-vector-vdw(n,3)}, it will suffice to
show that the map $\times \ell$ always fails to be injective. 
If $M$ denotes the matrix associated to the map $\times \ell$ 
with the basis as in Theorem \ref{thm.propofA(Delta)}, we
need to show that the null space of $M$ is non-empty.

The matrix $M$ is an $f_2 \times f_1$ matrix whose columns, respectively rows,  are 
indexed by the basis elements of $A_2$, respectively $A_3$.  
Consider an ordered basis elements of $A_2$, say 
$$m_1 \geq m_2 \geq \cdots \geq m_{f_1},$$
and order the columns using this order.
Because each $m_i$ corresponds to a basis element of $A_2$, each
$m_i = x_{a_i}x_{b_i}$ with $a_i < b_i$ and $\{x_{a_i},x_{b_i}\}$ a 
one-dimensional face of ${\tt vdw}(n,3)$.  With this ordering
of the basis elements, define an $f_1 \times 1$ column vector
${\bf z}$ of the form 
$$ 
{\bf z}^T = \begin{bmatrix} z_1 & z_2 & \cdots & z_{f_1} \end{bmatrix}^T.
$$
where  $$z_i = (-1)^{k_i+1}2^{k_i}  ~~\mbox{where $b_i-a_i = 2^{k_i}t_i$ and
$2\nmid t_i$}.$$
In other words, in the location indexed by the basis element
$m_i = x_{a_i}x_{b_i}$, we assign the largest power of $2$ that divides $b_i-a_i$, up to some sign.  Because our
characteristic is zero, every $z_i \neq 0$.

We now claim that $M{\bf z} = {\bf 0}$, from which our desired conclusion
will follow.   To prove this claim, we need to show that for
any row ${\bf r}$ of $M$, the dot product ${\bf r}\cdot {\bf z} =0$.

Consider any row ${\bf r}$ of $M$, and suppose that this row ${\bf r}$ is indexed 
by the monomial $x_ax_bx_c$ with $a<b< c$, which also corresponds to the two-dimensional face $\{x_a,x_b,x_c\}$.  The row vector
${\bf r}$ will consist
of all zeroes except three ones, which are located in the columns
indexed by $x_ax_b$, $x_ax_c$ and $x_bx_c$, i.e.,
\[
\begin{blockarray}{ccccccc c}
\cdots & x_ax_b & \cdots  &  x_ax_c & \cdots  & x_bx_c &  \cdots   & \\
\begin{block}{[ccccccc]c}
0 & 1 & 0 & 1 & 0 & 1 & 0 & x_ax_bx_c\\
\end{block}
\end{blockarray}
\]
This follows from the fact that $(x_1+\cdots +x_n)x_sx_t$ will
involve the basis element $x_ax_bx_c$ if and only if $x_sx_t$ divides
$x_ax_bx_c$.   

If $b-a = 2^{k_1}t_1$, $c-a = 2^{k_2}t_2$ and $
b-c = 2^{k_3}t_3$ where $2\nmid t_i$ for $i=1,2,3$, then
\begin{equation}\label{dotproduct}
{\bf r}\cdot{\bf z} = (-1)^{k_1+1}2^{k_1}+(-1)^{k_2+1}2^{k_2} + (-1)^{k_3+1}2^{k_3}.
\end{equation}
We thus need to determine the values of $k_1,k_2$ and $k_3$.

The face $\{x_a,x_b,x_c\}$ belongs to some
facet $\{x_i,x_{i+j},x_{i+2j},x_{i+3j}\}$ of ${\tt vdw}(n,3)$  for some $i$ and $j$.
So,  $\{x_a,x_b,x_c\}$ has one of the following
forms
$$\{x_i,x_{i+j},x_{i+2j}\},~\{x_i,x_{i+j},x_{i+3j}\},~\{x_i,x_{i+2j},
x_{i+3j}\},~~\mbox{or}~~\{x_{i+j},x_{i+2j},x_{i+3j}\}.$$

We now consider each possibility.  If 
$\{x_a,x_b,x_c\}=  \{x_i,x_{i+j},x_{i+2j}\}$, then $b-a = c-b = j$,
and $c-a=2j$.  So, if $j = 2^kt$, then $2j = 2^{k+1}t$.  So
in \eqref{dotproduct}, we have $k_1=k_3 = k$ and $k_2 =k+1$
and thus
$$(-1)^{k+1}2^{k}+(-1)^{k+2}2^{k+1} + (-1)^{k+1}2^{k} = (-1)^{k+1}2\cdot 2^k + (-1)^{k+2}2^{k+1} = 0.$$

If $\{x_a,x_b,x_c\}= \{x_i,x_{i+j},x_{i+3j}\}$,
we have $b-a = j$, $c-a = 3j$, and $c-b=2j$.  So, if $j = 2^kt$, we 
have $2j = 2^{k+1}t$ and $3j = 2^k(3t).$  
So in \eqref{dotproduct}, we have $k_1=k_2 = k$ and $k_3 =k+1$ and the
result follows as in the previous case.

If $\{x_a,x_b,x_c\}= \{x_i,x_{i+2j},x_{i+3j}\}$, then
$b-a = 2j$, $c-a=3j$, and $c-b =j$.  So, if $j =2^kt$, then
$2j =2^{k+1}t$ and $3j =2^k(3t)$.  Thus, in \eqref{dotproduct},
we have $k_2 = k_3 = k$ and $k_1=k+1$.  And thus
$$(-1)^{k+2}2^{k+1}+(-1)^{k+1}2^{k} + (-1)^{k+1}2^{k} = (-1)^{k+1}2\cdot 2^k + (-1)^{k+2}2^{k+1} = 0.$$

Finally, in the case that $\{x_a,x_b,x_c\}= \{x_{i+1},x_{i+2j},x_{i+3j}\}$
we have $b-a = c-b = j$ and $c-a=2j$, which is the same
as in the case $\{x_a,x_b,x_c\}= \{x_i,x_{i+j},x_{i+2j}\}$.

This now completes the proof. 
%Note that when $n \geq 7$,
%we showed that $A = A({\tt vdw}(n,3))$ fails to have the WLP in
%degree two.
\end{proof}

\begin{remark}
    In our proof, we explicitly constructed a vector
    ${\bf z}$ such that $M{\bf z} = {\bf 0}$.  
    Because the characteristic is $0$, this vector ${\bf z}$ we constructed
    is not the zero-vector.  If ${\rm char}(K) =2$, then 
    computations show that $A({\tt vdw}(n,3))$ fails to have
    the WLP for all $n \geq 4$, not just $n \geq 7$.
\end{remark}

%%%%%%%%%%%%%%%%%%%%%%%%%%%%%%%%%%%%%%%%%%%%%%%%%%%%%%%%%%%%%%%%
%%%%%%%%%%%%%%%%%%%%%%%%%%%%%%%%%%%%%%%%%%%%%%%%%%%%%%%%%%%%%%%%
%%%%%%%%%%%%%%%%%%%%%%%%%%%%%%%%%%%%%%%%%%%%%%%%%%%%%%%%%%%%%%%% 
\section{Pseudo-manifolds}
\label{sec.pseudomanifold}

In this section we classify
when ${\tt vdw}(n,k)$ is a pseudo-manifold (with
or without boundary).  
Our classification allows us to use results of Dao and Nair \cite{DN} to deduce when our algebras $A({\tt vdw}(n,k))$ have
some Lefschetz properties.  Recall the definition
of a pseudo-manifold.

\begin{definition}
\label{PMDEF}
    A $k$-dimensional simplicial complex is a \textit{pseudo-manifold} if
    \begin{enumerate}
        \item the simplicial complex is pure;
        \item all $(k-1)$-dimensional faces are contained in a maximum of two facets; and
        \item given any two facets, $F_a, F_b$ of the simplicial complex, there exists a sequence of facets $F_a=G_0, G_1, G_2, $\ldots, $G_n = F_b$ such that the dimension of $G_i$ $\cap$ $G_{i+1}$ is $(k-1)$ for all $0 \leq i \leq n-1$. 
    \end{enumerate}
    A $k$-dimensional pseudo-manifold has a \textit{boundary} if it has at least one $(k-1)$-dimensional face that belongs to exactly one facet. 
\end{definition}

We first establish some background:

\begin{lemma}
\label{X1}
Suppose $\{x_{i_1},\ldots,x_{i_{k+1}}\}$ is
a facet of ${\tt vdw}(n,k)$ 
on $\{x_1,\ldots,x_n\}$.  Then the indices
of the facet, i.e., $\{i_1,\ldots,i_{k+1}\}$,
satisfy one of the four conditions:
\begin{enumerate}
    \item all $k+1$ indices are  odd, or
    \item $\left\lceil {(k+1)/{2}} \right\rceil$ indices are odd  and $\left\lfloor {(k+1)/{2}} \right\rfloor$ indices are even, or
    \item $\left\lfloor {(k+1)/{2}} \right\rfloor$ 
    indices are odd and $\left\lceil {(k+1)/{2}} \right\rceil$ indices are even, or
    \item all $k+1$ indices are even.
\end{enumerate}
\end{lemma}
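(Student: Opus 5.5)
The plan is to work directly from the definition of a facet as an arithmetic progression. Write the indices as $i, i+d, i+2d, \ldots, i+kd$ with $d = {\rm jump}(F) \geq 1$. The parity of $i+md$ depends only on the parities of $i$ and $d$, and on the parity of $m$ when $d$ is odd. So the proof splits into two cases according to whether $d$ is even or odd.

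\emph{Case 1: $d$ even.} Then $i+md \equiv i \pmod 2$ for every $m=0,\ldots,k$, so all $k+1$ indices share the parity of $i$. If $i$ is odd we are in condition (1); if $i$ is even we are in condition (4).

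\emph{Case 2: $d$ odd.} Then $i+md \equiv i+m \pmod 2$, so the parities alternate along the progression. The indices with $m$ even ($m=0,2,4,\ldots$) have the parity of $i$. There are exactly $\lceil (k+1)/2\rceil$ such values of $m$ in $\{0,1,\ldots,k\}$. The remaining $\lfloor (k+1)/2\rfloor$ values of $m$ (the odd ones) give indices of the opposite parity. If $i$ is odd, this gives $\lceil (k+1)/2\rceil$ odd indices and $\lfloor (k+1)/2\rfloor$ even indices, which is condition (2). If $i$ is even, we get condition (3).

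The only step requiring any care is the count of even $m$ in $\{0,\ldots,k\}$. This is routine: if $k+1$ is even there are $(k+1)/2$ of each, and if $k+1$ is odd the even values $0,2,\ldots,k$ number $(k+2)/2 = \lceil (k+1)/2\rceil$. I do not expect a real obstacle. The four cases exhaust the possibilities since $i$ and $d$ each have one of two parities.
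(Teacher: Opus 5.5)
Your proposal is correct and follows the same route as the paper: split on the parities of the starting index $i$ and the jump $d$, getting all indices of one parity when $d$ is even and alternating parities, with $\lceil (k+1)/2\rceil$ indices sharing the parity of $i$, when $d$ is odd. Your version is more complete than the paper's, which works out only two of the four parity cases and leaves the rest as ``proved similarly,'' while you also verify the count of even $m$ in $\{0,\ldots,k\}$.
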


\begin{proof}
    An arbitrary facet of ${\tt vdw}(n,k)$ 
    has the form 
    $F = \{x_{i}, x_{i+a}, x_{i+2a}, x_{i+3a}, \ldots, x_{i+ka}\},$
   for some integers and $i$ and $a$.  The 
   conclusion then depends upon the parity
   of $i$ and $a$.  

   As an example, if $i$ is odd and $a$ is even,
   then $\{i,i+a,i+2a,\ldots,i+ka\}$ are all odd.
   On the other hand, 
   if $i$ and $a$ are both odd, we have
   $\{i,i+2a,i+4a,\ldots,\}$ are all odd
   and $\{i+a,i+3a,\ldots\}$ are all even.  
   The number in each set is either $\lfloor \frac{k+1}{2} \rfloor$ or $\lceil\frac{k+1}{2}\rceil$, where the number will depend upon 
   the parity of $k+1$.   
   
   The remaining cases
   are proved similarly.
 \end{proof}

Recall that if $F  = \{x_i,x_{i+a},\ldots,x_{i+ka}\}$ is a 
facet of ${\tt vdw}(n,k)$, then 
the \textit{jump factor} of $F$ is the common
difference $a$, i.e., 
 ${\rm jump}(F)=a$.
\begin{lemma}
\label{JUMPz}    
    If $\frac{n}{2} \leq k \leq$ n, then ${\rm jump}(F) = 1$ for all facets 
    $F \in {\tt vdw}(n,k)$. 
\end{lemma}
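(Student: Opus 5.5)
A facet of ${\tt vdw}(n,k)$ is an arithmetic progression $\{i,i+a,\ldots,i+ka\}$ inside $[n]$, so the plan is to bound the jump factor $a$ directly from this containment. The first index satisfies $i \geq 1$ and the last satisfies $i+ka \leq n$. Hence $1 + ka \leq n$, which gives
$$a \leq \frac{n-1}{k}.$$

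Next I use the hypothesis $k \geq \frac{n}{2}$, i.e.\ $n \leq 2k$. This gives
$$a \leq \frac{n-1}{k} \leq \frac{2k-1}{k} < 2.$$
Since $a$ is a positive integer, it follows that $a = 1$, so every facet has ${\rm jump}(F)=1$. For the upper range $k \leq n$, recall that the definition of ${\tt vdw}(n,k)$ assumes $n > k$. The bound above does not use any upper bound on $k$ beyond the existence of facets, so the case $k = n$ is vacuous or excluded.

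There is no real obstacle. The only care needed is to invoke the convention $n>k$ so that the complex is defined, and to note that a jump factor of $a \geq 2$ would force the progression to occupy at least $2k+1 > n$ consecutive positions' worth of span. Equivalently, the facets are exactly the $n-k$ consecutive blocks $F_t=\{x_t,\ldots,x_{t+k}\}$ already considered in the proof of Theorem~\ref{Deg1}.
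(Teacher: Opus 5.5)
Your proof is correct and is essentially the paper's argument: the paper assumes $a \geq 2$ and gets $i+ka \geq i+2k \geq i+n > n$, a contradiction, while you state the same inequality directly as $a \leq (n-1)/k \leq (2k-1)/k < 2$. Your remark that the case $k=n$ is vacuous, because there are no facets, correctly handles the loose upper bound $k \leq n$ in the statement.
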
 

\begin{proof}
    Let  $F = {\{x_{i}, x_{i+a},\ldots, x_{i+ka}}\}$ with $i+ka \leq n$ be a facet
    of ${\tt vdw}(n,k)$.  
    Suppose that $a \geq$ 2. Then $i+ka \geq i +2k$. But this means 
    $i+ka \geq i  +  \frac{2n}{2} > n$. 
    This gives is a contradiction, and thus  
    ${\rm jump}(F) = 1$ for all
    facets $F$.
    \end{proof}

We now come to the first main result of this
section.  The cases $k=1$ and $k=2$ are done separately.

\begin{theorem}
\label{X2}
    Suppose $n>k\geq 3$. Then ${\tt vdw}(n,k)$ is a pseudo-manifold if and only if $\frac{n}{2}$ $\leq$ k $<$ n.  Furthermore, 
    when ${\tt vdw}(n,k)$ is a pseudo-manifold, it has a
    boundary.
\end{theorem}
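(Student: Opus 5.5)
We treat the two directions separately.

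\textbf{The case $\frac{n}{2}\le k<n$.} By Lemma \ref{JUMPz}, every facet has jump factor $1$. So the facets are exactly the consecutive blocks
$$F_t=\{x_t,x_{t+1},\ldots,x_{t+k}\}, \qquad t=1,\ldots,n-k.$$
We check the three conditions of Definition \ref{PMDEF} in turn.
\begin{enumerate}
\item Purity holds because every facet has $k+1$ elements.
\item For the condition on $(k-1)$-dimensional faces, take a $k$-subset $G$ of some $F_t$. A facet $F_s\supseteq G$ satisfies $s\le \min G$ and $s+k\ge \max G$. Since $\max G-\min G\ge k-1$, there are at most two choices of $s$. Hence $G$ lies in at most two facets.
\item For connectivity, $F_t\cap F_{t+1}=\{x_{t+1},\ldots,x_{t+k}\}$ has dimension $k-1$. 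So the chain $F_a,F_{a+1},\ldots,F_b$ joins any two facets.
\end{enumerate}
For the boundary, take the face $\{x_1,\ldots,x_k\}\subset F_1$. Any facet containing it must have $s\le 1$, so $F_1$ is the only such facet. Hence the pseudo-manifold has a boundary.

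\textbf{The case $k<\frac{n}{2}$, i.e.\ $n\ge 2k+1$.} Here we must show ${\tt vdw}(n,k)$ is not a pseudo-manifold, and the plan is to violate condition (2). We look for a $k$-subset lying in at least three facets.

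Take the jump-two facet $P=\{x_1,x_3,\ldots,x_{2k+1}\}$, which exists since $2k+1\le n$. Remove one interior element of $P$ to get a $(k-1)$-face $G$ (for instance remove $x_3$). Then look for two further facets containing $G$, beyond $P$ itself. These could be other arithmetic progressions through the same points, such as progressions of jump $1$ covering a shorter span. However, consecutive blocks contain all intermediate points, so they do not work directly.

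A more promising choice is a face drawn from a jump-$1$ facet with $k\ge3$. Take $G=\{x_1,x_2,\ldots,x_{k-1},x_{k+1}\}$, i.e.\ $\{x_1,\ldots,x_{k+1}\}$ with $x_k$ deleted. This lies in $F_1$, and no other consecutive block contains both $x_1$ and $x_{k+1}$.

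\textbf{Main obstacle.} Finding a uniform triple-cover for all $k\ge3$ is the step I expect to be hardest. A single jump-$1$ facet and a single jump-$2$ facet overlap in only about $k/2$ points, so a naive pairing of the two families will not produce a shared $k$-subset.

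What I would try first is the face $G=\{x_2,x_3,\ldots,x_{k+1}\}$. It lies in $F_1$ and $F_2$, and it lies in a third facet only if some progression extends it. Since $G$ consists of consecutive points, no such third facet exists, so this face alone fails condition (2) only if combined with something else.

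Because of this, I would reconsider using condition (3) instead. The goal would be to show that when $n\ge 2k+1$, a jump-$d$ facet with $d\ge2$ shares at most a few points with any facet of a different jump, or with facets of the same jump but a different residue class. Lemma \ref{X1} on index parities is the natural tool to control such intersections. For $k\ge3$, a $(k-1)$-dimensional intersection of two facets forces $k$ common points of two arithmetic progressions of length $k+1$. The key claim is that this forces the two progressions to have equal jump: with $k\ge 3$, three common points in a nontrivial pattern determine the difference.

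Granting the key claim, adjacency in condition (3) preserves the jump factor. So the jump-$1$ facet $F_1$ and the jump-$2$ facet $P$ (which exists because $n\ge 2k+1$) cannot be connected by a chain of facets, and condition (3) fails. Proving that key claim carefully is the main technical step.
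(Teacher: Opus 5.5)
Your argument for $\frac{n}{2}\le k<n$ is correct and matches the paper. Your count $s\in[\max G-k,\min G]$ is a tidier way to check condition (2) than the paper's three cases.

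\textbf{The gap is in the converse.} Everything there rests on the ``key claim'' that two facets meeting in a $(k-1)$-dimensional face have the same jump. You explicitly grant this instead of proving it, and the heuristic you give is not correct as stated. Three common points do not pin down the jump once $k\ge 4$: for instance $\{1,3,5\}$ lies in both $\{1,2,3,4,5\}$ and $\{1,3,5,7,9\}$. What matters is that the two progressions share $k$ points, i.e.\ all but one point of each.

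The missing step is short. Let $S=P\cap Q$ have $k$ elements, and let $P$ have jump $d$. Then $S$ is $P$ with one term removed, so its consecutive gaps all equal $d$ except at most one equal to $2d$. Since $k\ge 3$ there are at least two gaps, so the minimal gap of $S$ is $d$. By symmetry the minimal gap is also the jump of $Q$, so the jumps agree. With this your conclusion follows: $\{x_1,\ldots,x_{k+1}\}$ and $\{x_1,x_3,\ldots,x_{2k+1}\}$ (which exists because $n\ge 2k+1$) have different jumps, so no chain joins them.

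Lemma~\ref{X1}, which you name as the natural tool, cannot prove jump invariance, since it does not separate jump $2$ from jump $4$. The paper instead uses it for a coarser invariant that already suffices. In a chain from $\{x_1,x_3,\ldots,x_{2k+1}\}$ to $\{x_1,\ldots,x_{k+1}\}$, take the last all-odd facet $G_p$. Then $G_{p+1}$ contains the $k$ odd indices of $G_p\cap G_{p+1}$ and exactly one even index. This contradicts Lemma~\ref{X1}, which forces at least $\lfloor (k+1)/2\rfloor\ge 2$ even indices in any mixed-parity facet.

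Your jump invariant, once proved, gives more. Every $(k-1)$-face $G$ lies only in facets whose jump is the minimal gap $d$ of $G$ and which start at $\min G$ or $\min G-d$. Hence condition (2) holds for all $n$ when $k\ge 3$, which is why your attempts to break (2) could not succeed.
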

\begin{proof}

We begin with the backward direction. Let $\frac{n}{2}$ $\leq$ $k$ $\leq$ $n$, and consider ${\tt vdw}(n,k)$. By construction, 
${\tt vdw}(n,k)$ is pure.
By Lemma \ref{JUMPz},
all facets $F$ of ${\tt vdw}(n,k)$ have ${\rm jump}(F) = 1.$
Thus all the facets of ${\tt vdw}(n,k)$ are
$$F_i = \{x_i, x_{i+1}, \ldots, x_{k+i}\} ~~\mbox{for $i=1,\ldots,n-k$}.$$

Since ${\tt vdw}(n,k)$ has dimension $k$, let $G = \{x_{j_1},
\ldots,x_{j_k}\}$ be any face of dimension $k-1$.  We have
that $G$ must be a subset of at least one facet, say 
$F_i = \{x_i,\ldots,x_{i+k}\}$ with $1 \leq i \leq n-k$.  The
face $G$ then has one of three forms: (1) $G = \{x_{i+1},x_{i+2},\ldots,x_{i+k}\}$; (2) $G = \{x_i,x_{i+1},\ldots,x_{i+k-1}\}$ or 
(3) $G = \{x_i,\ldots,\hat{x}_j,\ldots,x_{i+k}\}$ with
$i < j < i+k$.

In the first case, there are at most two facets that
contain $x_{i+1}$ and $x_{i+k}$, namely $F_i$ and $F_{i+1}$ (it is
possible $F_{i+1}$ does not exist if $i=n-k$).  Similarly,
in the second case, only $F_{i-1}$ and $F_i$ contain $x_i$ and
$x_{i+k-1}$ (again, it is possible $F_{i-1}$ does not exist if $i=1$).
So in these cases, $G$ can only belong to at most two facets.  
In the third case, $F_i$ is the only facet to contain both
$x_i$ and $x_{i+k}$.  Thus, in all three cases, $G$ can belong
to at most two facets.

To prove the third requirement for pseudo-manifolds, 
order the facets as $F_1,F_2,\ldots,F_{n-k}$.  For this
ordering, $F_i \cap F_{i+1} =
\{x_{i+1},\ldots,x_{i+k}\}$ has dimension $k-1$. So for any two facets, $F_a, F_b$ with $a<b$, we can choose the sequence $F_a, F_{a+1}, F_{a+2}, $\ldots$, F_{a+l=b}$ to be the sequence we need to meet the third requirement.
Hence if $\frac{n}{2}$ $\leq$ $k$ $\leq$ $n$, then ${\tt vdw}(n,k)$ is a pseudo-manifold.    Finally, note that ${\tt vdw}(n,k)$ also
has a boundary in this case since the $(k-1)$-dimensional
face $\{x_1,x_2,\ldots,x_k\}$ only belongs to the facet $F_1$
if $\frac{n}{2} \leq k \leq n$.

To prove the forward direction, suppose 3 $\leq$ $k$ $<$ $\frac{n}{2}$.
Then two facets of ${\tt vdw}(n,k)$ are
$$
F_1 = \{x_1, x_3, x_5, \ldots, x_{2k+1}\}
~~\mbox{and}~~
F_2 = \{x_1,x_2,\ldots,x_{k+1}\}.$$
Note that in $F_1$ all the indices are odd, while 
in $F_2$, both odd and even indices appear.

Suppose ${\tt vdw}(n,k)$ is a pseudo-manifold. Then by 
Definition \ref{PMDEF} (3), there must be a sequence of facets 
$F_1 = G_0, G_1, \ldots, G_m = F_2$ such that the 
dimension of $G_i$ $\cap$ 
$G_{i+1}$ is $(k-1)$ for all $0 \leq i \leq m-1$. Since all the
indices of $F_1$ are odd, 
but $F_2$ does not have this property, 
there must be some facet $G_p$ in this chain that is the last to consist of only odd indices.  Since $G_p$ consists of all odd entries only, this implies that $G_p \cap G_{p+1}$ consists of $k$ elements, all
of which have an odd index. So $G_{p+1}$ has at least $k$ vertices 
with odd indices. Because $G_{p+1}$ is the first facet to have indices of mixed parity, of its $k+1$ elements, $k$ vertices will have an odd index and one vertex has an even index. 
We now have a contradiction to Lemma \ref{X1}, since
every facet must have at least $\left\lfloor {(k+1)/{2}} \right\rfloor$ $\geq$ $\left\lfloor {(3+1)/{2}} \right\rfloor$ = 2 even indices
if it does not contain all odd indices. Thus no such chain of facets can exist, so ${\tt vdw}(n,k)$ cannot be  a pseudo-manifold. 
\end{proof}

\begin{lemma}\label{lem.pseudomanifoldk=23}
${\tt vdw}(n,1)$ is a pseudo-manifold if and only if $n \leq 3$. ${\tt vdw}(n,2)$ is a pseudo-manifold if and only if $n \leq 6$.
\end{lemma}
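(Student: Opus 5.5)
The plan is to argue directly from Definition~\ref{PMDEF}, using the explicit description of the facets as arithmetic progressions. For $n \geq 7$ (respectively $n\geq 4$ when $k=1$) I will exhibit a $(k-1)$-dimensional face lying in three facets. For the small values of $n$ I will list the facets and check the three conditions by hand.

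\emph{The case $k=1$.} Every pair $\{i,j\}$ with $i<j$ is an arithmetic progression of length $1$. So ${\tt vdw}(n,1)$ is the complete graph on $\{x_1,\ldots,x_n\}$, viewed as a pure one-dimensional complex. Its $0$-dimensional faces are the vertices, and each vertex lies in exactly $n-1$ facets. Condition (2) therefore holds if and only if $n-1\leq 2$, that is, $n\leq 3$. For $n=2$ there is a single facet $\{x_1,x_2\}$. For $n=3$ the facets are the three edges of a triangle, any two of which share a vertex, so condition (3) holds. Purity is automatic, and this settles the first claim.

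\emph{The case $k=2$, necessity.} Suppose $n\geq 7$. The edge $\{x_3,x_5\}$ lies in the three facets
$$\{x_3,x_4,x_5\},\quad \{x_1,x_3,x_5\},\quad \{x_3,x_5,x_7\},$$
which are progressions with jump $1$, $2$, $2$ respectively. This violates condition (2) of Definition~\ref{PMDEF}, so ${\tt vdw}(n,2)$ is not a pseudo-manifold.

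\emph{The case $k=2$, sufficiency.} For $3\leq n\leq 6$ I list the facets explicitly. For $n=6$ they are
$$\{x_1,x_2,x_3\},\ \{x_2,x_3,x_4\},\ \{x_3,x_4,x_5\},\ \{x_4,x_5,x_6\},\ \{x_1,x_3,x_5\},\ \{x_2,x_4,x_6\},$$
and for $n=3,4,5$ they are the subsets of this list lying in $[n]$. Condition (2) is checked edge by edge:
\begin{itemize}
\item A consecutive edge $\{x_i,x_{i+1}\}$ lies only in the at most two consecutive facets $\{x_{i-1},x_i,x_{i+1}\}$ and $\{x_i,x_{i+1},x_{i+2}\}$.
\item A jump-two edge $\{x_i,x_{i+2}\}$ lies only in $\{x_i,x_{i+1},x_{i+2}\}$ and in the unique jump-two facet containing it. For example, $\{x_1,x_3\}$ lies in $\{x_1,x_2,x_3\}$ and $\{x_1,x_3,x_5\}$, while $\{x_2,x_4\}$ lies in $\{x_2,x_3,x_4\}$ and $\{x_2,x_4,x_6\}$. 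No jump-one or jump-two facet beyond these contains such an edge when $n \le 6$, since that would require an index $\leq 0$ or $\geq 7$.
\item The edges $\{x_1,x_5\}$ and $\{x_2,x_6\}$ lie in only one facet each.
\end{itemize}
For condition (3), the consecutive facets form a chain in which neighbours share an edge. Each jump-two facet is attached to this chain: $\{x_1,x_3,x_5\}$ shares $\{x_1,x_3\}$ with $\{x_1,x_2,x_3\}$, and $\{x_2,x_4,x_6\}$ shares $\{x_2,x_4\}$ with $\{x_2,x_3,x_4\}$. Purity holds by construction.

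The only real care needed is the exhaustive check of condition (2) for $n=5,6$: one must make sure no edge is missed. Sorting edges by their difference $1$, $2$ or $4$, as above, makes this check routine.
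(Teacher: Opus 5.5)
Your proof is correct and follows essentially the same route as the paper: for $k=1$, $n\geq 4$ and for $k=2$, $n\geq 7$ you exhibit a $(k-1)$-face lying in three facets (the paper uses $\{x_1\}$ and the same edge $\{x_3,x_5\}$), and for the small cases you list the facets and verify the three conditions by hand. Your bookkeeping (each vertex lies in $n-1$ edges when $k=1$, and edges are sorted by difference $1$, $2$, $4$ when $k=2$) is a tidier version of the paper's case-by-case check; one wording fix is that ``the unique jump-two facet'' should read ``at most one jump-two facet'', because for $n\leq 5$ some jump-two edges lie in no jump-two facet.
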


\begin{proof}

\textit{Case 1: $k=1$.}  We break this into subcases:

(1) ${\tt vdw}(2,1)$ =  $\langle$$\{x_{1}, x_{2}\}$$\rangle$. This simplicial complex has only one facet and thus satisfies all three conditions to be a pseudo-manifold.

(2) ${\tt vdw}(3,1)$ = $\langle\{x_{1}, x_{2}\}, \{x_{2}, x_{3}\}, \{x_{1}, x_{3}\} \rangle$. Each vertex is contained in exactly two facets, and for any of the three facets, changing either of its vertices gives the other two facets, again satisfying all three conditions for it to be a pseudo-manifold.

(3) For $n$ $\geq$ 4, we claim ${\tt vdw}(n,1)$ is not a pseudo-manifold. For all such ${\tt vdw}(n,1)$, the following are valid facets:
\{$x_{1}$, $x_{2}$\}, \{$x_{1}$, $x_{3}$\}, and \{$x_{1}$, $x_{4}$\}.
Since $k=1$, $\{x_1\}$ is a $(k-1)$-dimensional face that
belongs to at least three facets,
violating Definition \ref{PMDEF} (2). Thus ${\tt vdw}(n,1)$ is not a pseudo-manifold for all $n \geq 4.$

\textit{Case 2: $k=2$}.  We break this into subcases.

 (1) ${\tt vdw}(3,2) = \langle \{x_{1}, x_{2}, x_{3}\}\rangle$.
It is immediate this complex is  a pseudo-manifold.

(2) ${\tt vdw}(4,2) = \langle \{x_{1}, x_{2}, x_{3}\}, 
\{x_{2}, x_{3}, x_{4}\} \rangle$. 
The one-dimensional face \{$x_{1}$, $x_{2}$\} appears in two facets, whereas all other one-dimensional faces only appear in one facet, thus satisfying Definition \ref{PMDEF} (2). The intersection of the two facets is \{$x_{1}$, $x_{2}$\}, which has dimension one.
So  ${\tt vdw}(4,2)$ is a pseudo-manifold.

($3$) ${\tt vdw}(5,2)$ is generated by the facets: 
$$F_0 = \{x_{1}, x_{2}, x_{3}\},~ F_1  =\{x_{2}, x_{3}, x_{4}\},~ 
F_2 =\{x_{3}, x_{4}, x_{5}\}, ~~ F_3 = \{x_{1}, x_{3}, x_{5}\}.$$ 
All one-dimensional faces are in one facet except for \{$x_{1}$, $x_{3}$\} , \{$x_{2}$, $x_{3}$\}, \{$x_{3}$, $x_{4}$\}, and \{$x_{3}$, $x_{5}$\}, which appear in two. Thus all 1-dimensional facets are contained in at most two facets. Finally, the sequence $F_0, F_1, F_2, F_3$ satisfies Definition \ref{PMDEF} (3), confirming ${\tt vdw}(5,2)$ is a pseudo-manifold.

 (4) ${\tt vdw}(6,2)$ is generated by the facets: 
\begin{multline*}
    F_0 = \{x_{1}, x_{2}, x_{3}\}, F_1  =\{x_{2}, x_{3}, x_{4}\}, 
F_2 =\{x_{3}, x_{4}, x_{5}\}, F_3 = \{x_{4}, x_{5}, x_{6}\}, \\ 
F_4 = \{x_{1}, x_{3}, x_{5}\}, F_5 = \{x_{2}, x_{4}, x_{6}\}.
\end{multline*}The 1-dimensional faces
\{$x_{1}$, $x_{3}$\}, \{$x_{2}$, $x_{3}$\}, \{$x_{2}$, $x_{4}$\}, \{$x_{3}$, $x_{4}$\}, \{$x_{3}$, $x_{5}$\}, \{$x_{4}$, $x_{5}$\}, and \{$x_{4}$, $x_{6}$\} appear in two facets, while all other 1-dimensional faces only appear in one facet, satisfying
Definition \ref{PMDEF} (2).  The sequence $F_4, F_0, F_1, F_2, F_3, F_5$ satisfies \ref{PMDEF} (3), resulting in ${\tt vdw}(6,2)$ also being a pseudo-manifold.

(5) For $n$ $\geq$ 7, the sets
\{$x_{3}$, $x_{4}$, $x_{5}$\}, \{$x_{1}$, $x_{3}$, $x_{5}$\}, and \{$x_{3}$, $x_{5}$, $x_{7}$\} are valid facets of ${\tt vdw}(n,2)$. The one-dimensional face \{$x_{3}$, $x_{5}$\} is contained within all three of these facets, violating Definition \ref{PMDEF} (2), thus
proving ${\tt vdw}(n,2)$ is not a pseudo-manifold for $n$ $\geq$ 7. 
\end{proof}

We summarize these results in the following corollary:
\begin{corollary}
\label{VDW} Let $n \geq k \geq 1$ be integers.  Then
    ${\tt vdw}(n,k)$ is a pseudo-manifold if and only if:
    \begin{enumerate}
        \item $k=1$, and $n=2$ or $3$, or
        \item $k=2$, and $n=3,4,5,$ or $6$, or
        \item $k \geq 3$, and $\frac{n}{2}$ $\leq$ k $<$ n.
    \end{enumerate}
\end{corollary}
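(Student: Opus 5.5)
The corollary is essentially a bookkeeping assembly of Theorem \ref{X2} and Lemma \ref{lem.pseudomanifoldk=23}, so the plan is to split according to the value of $k$. I will take the standing convention $n > k$ from the definition of ${\tt vdw}(n,k)$; the hypothesis $n \geq k$ in the statement then only adds the degenerate case $n=k$, for which ${\tt vdw}(n,k)$ is not defined. The case split is $k=1$, $k=2$, and $k \geq 3$, and in each case I will show that both sides of the ``if and only if'' describe the same set of pairs $(n,k)$.

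For $k=1$, Lemma \ref{lem.pseudomanifoldk=23} says ${\tt vdw}(n,1)$ is a pseudo-manifold exactly when $n \leq 3$. Combined with $n > k = 1$, this gives $n \in \{2,3\}$, which is item (1). For $k=2$, the same lemma gives $n \leq 6$. Together with $n > 2$ this yields $n \in \{3,4,5,6\}$, which is item (2).

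For $k \geq 3$, Theorem \ref{X2} applies directly. It states that, for $n > k \geq 3$, ${\tt vdw}(n,k)$ is a pseudo-manifold if and only if $\frac{n}{2} \leq k < n$, which is item (3).

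Finally I will check that the three items are mutually exclusive and together cover exactly the pseudo-manifold cases. This is immediate because each item fixes a different range of $k$. The only point needing care is the boundary convention $n \geq k$ versus $n > k$, and the convention above settles it. I do not expect a genuine obstacle: the work was already done in Theorem \ref{X2} and Lemma \ref{lem.pseudomanifoldk=23}.
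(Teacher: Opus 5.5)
Your proposal is correct and matches the paper, which presents this corollary simply as a summary of Lemma \ref{lem.pseudomanifoldk=23} (for $k=1,2$) and Theorem \ref{X2} (for $k \geq 3$), combined with the standing requirement $n>k$. Your handling of the degenerate case $n=k$ is also consistent with the statement, since none of items (1)--(3) allows $n=k$.
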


Reviewing the proof of Lemma \ref{lem.pseudomanifoldk=23}, we
see that every pseudo-manifold has at least one $(k-1)$-dimensional
face that belongs to only one facet {\it except} ${\tt vdw}(3,1)$. 
Consequently, we also have the following characterization of
van der Waerden complexes that are pseudo-manifolds with boundary.

\begin{corollary}
\label{pseudobound}
Let $n \geq k \geq 1$ be integers. Then
${\tt vdw}(n,k)$ is a pseudo-manifold with boundary if and only if
${\tt vdw}(n,k)$ is a pseudo-manifold and $(n,k) \neq (3,1)$.
\end{corollary}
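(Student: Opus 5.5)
The plan is to deduce Corollary~\ref{pseudobound} from Corollary~\ref{VDW} together with the explicit case analyses already carried out. The forward direction is immediate. By Definition~\ref{PMDEF}, a pseudo-manifold with boundary is in particular a pseudo-manifold. So it remains to check that ${\tt vdw}(3,1)$ has no boundary. Its facets are $\{x_1,x_2\},\{x_2,x_3\},\{x_1,x_3\}$, and each $0$-dimensional face $\{x_i\}$ lies in exactly two of them. Hence no $(k-1)$-face lies in exactly one facet, and $(n,k)\neq(3,1)$.

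For the backward direction, suppose ${\tt vdw}(n,k)$ is a pseudo-manifold with $(n,k)\neq(3,1)$. By Corollary~\ref{VDW}, $(n,k)$ falls into one of three situations, and in each I exhibit a $(k-1)$-dimensional face lying in exactly one facet. If $k\geq 3$ and $\frac{n}{2}\leq k<n$, Theorem~\ref{X2} already states that the complex has a boundary. Concretely, $\{x_1,\ldots,x_k\}$ lies only in $F_1$, because every facet has jump $1$ by Lemma~\ref{JUMPz}, and the only facet containing $x_1$ is $F_1$.

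The remaining situations are finitely many small cases.
\begin{itemize}
\item $(n,k)=(2,1)$: the single facet $\{x_1,x_2\}$ contains $\{x_1\}$, which lies in no other facet.
\item $(n,k)=(3,2)$: the single facet contains $\{x_1,x_2\}$.
\item $(n,k)=(4,2),(5,2),(6,2)$: the face $\{x_1,x_2\}$ lies only in $\{x_1,x_2,x_3\}$. The only arithmetic progression of length two steps containing both $1$ and $2$ must have jump $1$ and start at $1$.
\end{itemize}
These checks can be read off the facet lists in the proof of Lemma~\ref{lem.pseudomanifoldk=23}.

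There is no real obstacle here. The only point needing care is to justify that the boundary face in each small case lies in exactly one facet, rather than merely asserting it. For the jump-$1$ families this follows uniformly from the observation that a progression containing two consecutive integers must have jump $1$, and the facet is then forced once it contains the smallest vertex $x_1$.
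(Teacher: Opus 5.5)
Your proof is correct and takes essentially the same route as the paper: you use Corollary~\ref{VDW}, cite Theorem~\ref{X2} for $k\geq 3$ (where $\{x_1,\ldots,x_k\}$ lies only in $F_1$), and read off the $k=1,2$ cases from the facet lists in Lemma~\ref{lem.pseudomanifoldk=23}, with ${\tt vdw}(3,1)$ the only pseudo-manifold in which every $(k-1)$-face lies in two facets. Your justification that $\{x_1,x_2\}$ lies in exactly one facet (a progression containing two consecutive integers must have jump $1$) is a bit more careful than the paper, whose case analysis for ${\tt vdw}(4,2)$ misidentifies which edge lies in two facets.
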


The relevance of pseudo-manifolds to the study of the WLP of
$A({\tt vdw}(n,k))$ comes via the following result
of Dao and Nair \cite{DN}.  In particular, 
if ${\tt vdw}(n,k)$ is a pseudo-manifold, then this implies
the WLP in certain degrees.  While we state the result
of Dao and Nair, we do not need the dual graph, so we do not define
it here.

\begin{theorem}[{\cite[Theorems 4.2 and 4.4]{DN}}]
\label{X3}
Suppose ${\rm char}(K) = 0$.
    Let $\Delta$ be a pseudo-manifold with $dim(\Delta) = d.$ Then $A(\Delta)$ has the WLP in degree $d$ if and only if:
    \begin{enumerate}
        \item the pseudo-manifold has a boundary, or
        \item the pseudo-manifold does not have a boundary, but its dual graph is not bipartite.
    \end{enumerate} 
\end{theorem}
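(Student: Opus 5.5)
By Theorem \ref{BASIC} it suffices to work with $\ell = x_1+\cdots+x_n$, and by Theorem \ref{thm.propofA(Delta)} the map $\times\ell: A(\Delta)_d \rightarrow A(\Delta)_{d+1}$ becomes a $0/1$ matrix $M$. Its columns are indexed by the $(d-1)$-dimensional faces (ridges) of $\Delta$, and its rows by the $d$-dimensional faces, which are the facets since $\Delta$ is pure. The entry in row $F$ and column $G$ is $1$ exactly when $G \subset F$, as in Example \ref{ex.lingalgsetup}. The plan is to turn the rank condition on $M$ into a condition on the dual graph of $\Delta$: its vertices are the facets, with an edge between two facets when they share a ridge.

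\textbf{Step 1: the map should be surjective.} Each facet contains $d+1$ ridges, and by Definition \ref{PMDEF} (2) each ridge lies in at most two facets. Double counting gives $f_{d-1} \geq \frac{d+1}{2} f_d \geq f_d$ when $d \geq 1$. The case $d=0$ is trivial: $A_0 = K$ and $\ell \neq 0$ in $A_1$. So maximal rank means rank $f_d$, which is equivalent to $M^T$ being injective. In other words, we need that no nonzero vector $y=(y_F)$ indexed by facets satisfies $y^T M = 0$.

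\textbf{Step 2: translate $y^T M=0$ into local conditions.} The column of a ridge $G$ has ones exactly in the rows of the facets containing $G$, and there are one or two of these. So the condition says:
\begin{itemize}
\item if $G$ lies in a single facet $F$, then $y_F = 0$;
\item if $G$ lies in two facets $F, F'$, then $y_F = -y_{F'}$.
\end{itemize}
Two distinct facets share at most one ridge, so the second kind of condition is exactly one relation $y_F = -y_{F'}$ for each edge of the dual graph. Definition \ref{PMDEF} (3) says precisely that the dual graph is connected.

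\textbf{Step 3: analyse the cases.}
\begin{itemize}
\item If $\Delta$ has a boundary, some $y_{F_0}=0$. Propagating $y_F = -y_{F'}$ along paths in the connected dual graph forces $y = 0$, so the WLP holds in degree $d$.
\item If there is no boundary, every ridge lies in exactly two facets, so only the sign relations remain. A nonzero solution must have $|y_F|$ constant and nonzero, with opposite signs on adjacent facets. Such a solution exists if and only if the dual graph admits a proper $2$-colouring, i.e.\ is bipartite. In that case $y_F = \pm 1$ according to the colour class is a nonzero kernel vector of $M^T$, and the WLP fails. Otherwise any odd cycle forces $y_F = -y_F$ at a vertex $F$ of that cycle, hence $y_F = 0$, and connectivity then gives $y=0$.
\end{itemize}

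Here ${\rm char}(K)=0$, or more precisely ${\rm char}(K)\neq 2$, is used exactly at the step $2y_F = 0 \Rightarrow y_F=0$. The step needing the most care is the bookkeeping in Step 2: confirming that the column structure of $M$ encodes precisely the dual-graph adjacencies, including the fact that two facets cannot share two different ridges. The rest is a straightforward propagation argument.
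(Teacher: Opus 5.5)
The paper gives no proof of this statement. It quotes it from Dao--Nair \cite{DN} and uses it as a black box, and only the boundary case is needed, for Corollary \ref{YASS}. So there is no in-paper argument to compare against. What you give is a self-contained proof, and for $d\geq 1$ it is correct:
\begin{enumerate}
\item The double count $(d+1)f_d\leq 2f_{d-1}$ correctly reduces maximal rank to surjectivity.
\item The column of a ridge $G$ is the indicator vector of the facets containing $G$, since every $d$-face is a facet when $\dim\Delta=d$.
\item The three cases are handled correctly: a boundary ridge pins $y_{F_0}=0$, an odd cycle forces $2y_{F_0}=0$, and a proper $2$-colouring gives a $\pm 1$ kernel vector. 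Definition \ref{PMDEF} (3) supplies the connectivity you need to propagate.
\end{enumerate}
Read ``$|y_F|$ constant'' as $y_F=\pm y_{F_0}$; that is what propagation gives over any field.

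Your route also proves more than the quoted statement. The boundary case works in every characteristic. The non-bipartite case without boundary needs only ${\rm char}(K)\neq 2$. The failure in the bipartite case without boundary holds in every characteristic. You only need Theorem \ref{BASIC} in its degree-by-degree form. That form holds by the same rescaling-of-variables argument, and it is how the paper itself uses the theorem in Example \ref{ex.lingalgsetup}.

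One correction: your $d=0$ remark does not complete the proof. It actually shows that the equivalence, as literally stated, fails when $d=0$. Take two isolated points. They form a $0$-dimensional pseudo-manifold without boundary, since the unique ridge $\emptyset$ lies in both facets. Its dual graph, defined as you define it, is a single edge and hence bipartite. Yet $\times\ell:A_0\rightarrow A_1$, $1\mapsto x_1+x_2$, is injective and so has maximal rank. This is exactly where your Step 1 breaks down. For $d=0$ we have $f_{-1}=1<2=f_0$, so maximal rank means injectivity, and a nonzero kernel vector of $M^T$ is no obstruction. The statement should therefore be read with $d\geq 1$. This is harmless for the paper, since $\dim{\tt vdw}(n,k)=k\geq 1$.
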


Here is the relevant corollary for Artinian rings constructed
from ${\tt vdw}(n,k)$.  This is a direct consequence 
of Theorems \ref{X2} and \ref{X3}.

\begin{corollary}
\label{YASS}
Suppose ${\rm char}(K) = 0$ and let
     $n > k \geq 3$. Then $A({\tt vdw}(n,k))$ has the WLP in degree $k$ if $\frac{n}{2}$ $\leq$ $k$ $<$ $n$.
\end{corollary}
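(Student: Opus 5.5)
The plan is to combine the pseudo-manifold classification with the Dao--Nair criterion. Suppose ${\rm char}(K)=0$, $n>k\geq 3$, and $\frac{n}{2}\leq k<n$. By Theorem \ref{X2}, ${\tt vdw}(n,k)$ is then a pseudo-manifold, and moreover one with a boundary. Its dimension is $d=k$, since every facet $\{x_i,x_{i+a},\ldots,x_{i+ka}\}$ has $k+1$ elements. We will apply Theorem \ref{X3} with $\Delta={\tt vdw}(n,k)$ and $d=k$. Condition (1) of that theorem holds because the boundary exists, so $A({\tt vdw}(n,k))$ has the WLP in degree $k$. The dual graph is never needed, since we are in the boundary case.

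To make the argument self-contained, we will recall the boundary witness from the proof of Theorem \ref{X2}. By Lemma \ref{JUMPz}, every facet has jump $1$. Hence the facets are exactly $F_i=\{x_i,\ldots,x_{i+k}\}$ for $1\leq i\leq n-k$. The $(k-1)$-dimensional face $\{x_1,\ldots,x_k\}$ contains $x_1$, and $F_1$ is the only facet that contains $x_1$. So this face lies in exactly one facet, which is what having a boundary means.

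We should also check that the hypotheses match. Theorem \ref{X2} assumes $n>k\geq 3$, which is exactly our hypothesis. Its backward direction uses $\frac{n}{2}\leq k$, which we also have. The degree in Theorem \ref{X3} is $\dim\Delta=k$, which is the degree in the statement. There is no real obstacle here. The only point needing care is that Theorem \ref{X3} is stated as an if-and-only-if and is invoked in the direction ``boundary $\Rightarrow$ WLP in degree $d$.'' That direction is legitimate because ${\rm char}(K)=0$ is assumed.
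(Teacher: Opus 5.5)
Your proposal is correct and follows the paper's argument exactly: the corollary is obtained directly by combining Theorem~\ref{X2} (for $\frac{n}{2}\leq k<n$, ${\tt vdw}(n,k)$ is a pseudo-manifold with boundary, witnessed by $\{x_1,\ldots,x_k\}$) with condition (1) of the Dao--Nair Theorem~\ref{X3} in dimension $d=k$.
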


%\begin{proof}
%    By Corollaries $\ref{VDW}$, and $\ref{pseudobound}$ all such ${\tt vdw}(n,k)$ are pseudo-manifolds with boundary. Thus all of these pseudo-manifolds have the WLP in degree $k$.
%\end{proof}

\begin{remark}
    We could also deduce similar result for $A({\tt vdw}(n,1))$ 
    and $A({\tt vdw}(n,2))$ when ${\tt vdw}(n,1)$ and ${\tt vdw}(n,2)$
    are pseudo-manifolds,  However, these results are much weaker
    than Theorems \ref{KISONE} or \ref{KISTWO}, which already shows
    these Artinian rings have the WLP.
\end{remark}

%%%%%%%%%%%%%%%%%%%%%%%%%%%%%%%%%%%%%%%%%%%%%%%%%%%%%%%%

\section{Future directions}
\label{sec.futuredirections}

In this paper, we have completely determined when
$A({\tt vdw}(n,k))$ has the WLP for $k=1,2$ and $3$.  
Using Macaulay2, we computed when $A({\tt vdw}(n,k))$ has
the WLP for all $1 \leq k < n \leq 20$, in the case that
${\rm char}(K) = 0$. Our results
are summarized in Table \ref{TABLEWLP}.  
Note that the first three columns have been determined for
all $n$ by Theorems \ref{KISONE}, \ref{KISTWO}, and \ref{thm.casek=3},
while the ``diagonal'' is always true via Theorem \ref{NMINUS1}.

Based upon our computations, we propose the following conjecture.

\begin{conjecture}\label{conjecture}
    Suppose ${\rm char}(K) = 0$. Fix an integer $k \geq 3$.  
    \begin{enumerate}
    \item If $k$ is odd, 
    then there exists an $m$ such that  $A({\tt vdw}(n,k))$ fails to have
    the WLP for $n \geq m$. 
    \item If $k$ is even, then $A({\tt vdw}(n,k))$ has
    the WLP for all $n > k$.
    \end{enumerate}
\end{conjecture}

\noindent
In terms of Table \ref{TABLEWLP}, the conjecture says that in
any column indexed by an odd $k$ (except for $k=1$), one should eventually
expect to see an \xmark, i.e., $A({\tt vdw}(n,k))$ will fail to have  the
WLP for large $n$.    

If Conjecture \ref{conjecture} is true, then
for each odd integer $k>1$, it would also be 
interesting to determine the minimal $n$ for which
$A({\tt vdw}(n,k))$ fails to have the WLP.  In Table \ref{firstfailure} we have recorded the 
smallest $n$ for which $A({\tt vdw}(n,k))$ fails to have the WLP for $3 \leq k \leq 11$ (this was the limit of what we could test). 
Based upon this scant evidence,  it appears that 
the minimal
$n$ satisfies
$$ n = 2k+2 -\frac{(1+(-1)^{\frac{k+1}{2}})}{2} ~~\mbox{for odd $k \geq 3$.}$$

\begin{table}[h!]
\begin{tabular}{|c|*{19}{c|}}
\hline
\diagbox{$n$}{$k$} & 1 & 2 & 3 & 4 & 5 & 6 & 7 & 8 & 9 & 10 & 11 & 12 & 13 & 14 & 15 & 16 & 17 & 18 & 19 \\
\hline
%      1         2       3        4        5        6        7        8        9        10       11        12       13      14       15       16       17        18        19
2  & \cmark & \gmark & \gmark & \gmark & \gmark & \gmark & \gmark & \gmark & \gmark & \gmark & \gmark & \gmark & \gmark & \gmark & \gmark & \gmark & \gmark & \gmark & \gmark \\ \hline
3  & \cmark & \cmark & \gmark & \gmark & \gmark & \gmark & \gmark & \gmark & \gmark & \gmark & \gmark & \gmark & \gmark & \gmark & \gmark & \gmark & \gmark & \gmark & \gmark \\ \hline
4  & \cmark & \cmark & \cmark & \gmark & \gmark & \gmark & \gmark & \gmark & \gmark & \gmark & \gmark & \gmark & \gmark & \gmark & \gmark & \gmark & \gmark & \gmark & \gmark \\ \hline
5  & \cmark & \cmark & \cmark & \cmark & \gmark & \gmark & \gmark & \gmark & \gmark & \gmark & \gmark & \gmark & \gmark & \gmark & \gmark & \gmark & \gmark & \gmark & \gmark \\ \hline
6  & \cmark & \cmark & \cmark & \cmark & \cmark & \gmark & \gmark & \gmark & \gmark & \gmark & \gmark & \gmark & \gmark & \gmark & \gmark & \gmark & \gmark & \gmark & \gmark \\ \hline
7  & \cmark & \cmark & \xmark & \cmark & \cmark & \cmark & \gmark & \gmark & \gmark & \gmark & \gmark & \gmark & \gmark & \gmark & \gmark & \gmark & \gmark & \gmark & \gmark \\ \hline
8  & \cmark & \cmark & \xmark & \cmark & \cmark & \cmark & \cmark & \gmark & \gmark & \gmark & \gmark & \gmark & \gmark & \gmark & \gmark & \gmark & \gmark & \gmark & \gmark \\ \hline
9  & \cmark & \cmark & \xmark & \cmark & \cmark & \cmark & \cmark & \cmark & \gmark & \gmark & \gmark & \gmark & \gmark & \gmark & \gmark & \gmark & \gmark & \gmark & \gmark \\ \hline
10 & \cmark & \cmark & \xmark & \cmark & \cmark & \cmark & \cmark & \cmark & \cmark & \gmark & \gmark & \gmark & \gmark & \gmark & \gmark & \gmark & \gmark & \gmark & \gmark \\ \hline
11 & \cmark & \cmark & \xmark & \cmark & \cmark & \cmark & \cmark & \cmark & \cmark & \cmark & \gmark & \gmark & \gmark & \gmark & \gmark & \gmark & \gmark & \gmark & \gmark \\ \hline
12 & \cmark  & \cmark & \xmark & \cmark & \xmark & \cmark & \cmark & \cmark & \cmark & \cmark & \cmark & \gmark & \gmark & \gmark & \gmark & \gmark & \gmark & \gmark & \gmark \\ \hline
13 & \cmark & \cmark & \xmark & \cmark & \xmark & \cmark & \cmark & \cmark & \cmark & \cmark & \cmark & \cmark & \gmark & \gmark & \gmark & \gmark & \gmark & \gmark & \gmark \\ \hline
14 & \cmark & \cmark & \xmark & \cmark & \xmark & \cmark & \cmark & \cmark & \cmark & \cmark & \cmark & \cmark & \cmark & \gmark & \gmark & \gmark & \gmark & \gmark & \gmark \\ \hline
15 & \cmark & \cmark & \xmark & \cmark & \xmark & \cmark & \xmark & \cmark & \cmark & \cmark & \cmark & \cmark & \cmark & \cmark & \gmark & \gmark & \gmark & \gmark & \gmark \\ \hline
16 & \cmark & \cmark & \xmark & \cmark & \xmark & \cmark & \xmark & \cmark & \cmark & \cmark & \cmark & \cmark & \cmark & \cmark & \cmark  & \gmark & \gmark & \gmark & \gmark  \\ \hline
17 & \cmark & \cmark & \xmark & \cmark & \xmark & \cmark & \xmark & \cmark & \cmark & \cmark & \cmark & \cmark & \cmark & \cmark & \cmark & \cmark & \gmark & \gmark & \gmark \\ 
\hline
18 & \cmark & \cmark & \xmark & \cmark & \xmark & \cmark & \xmark & \cmark & \cmark & \cmark & \cmark & \cmark & \cmark & \cmark & \cmark & \cmark & \cmark & \gmark & \gmark \\ 
\hline
19 & \cmark & \cmark & \xmark & \cmark & \xmark & \cmark & \xmark & \cmark & \cmark & \cmark & \cmark & \cmark & \cmark & \cmark & \cmark & \cmark & \cmark & \cmark & \gmark \\ 
\hline
20 & \cmark & \cmark & \xmark & \cmark & \xmark & \cmark & \xmark & \cmark & \xmark & \cmark & \cmark & \cmark & \cmark & \cmark & \cmark & \cmark & \cmark & \cmark & \cmark \\ 
\hline
\end{tabular}\vspace{.5em}
\caption{The van der Waerden complexes ${\tt vdw}(n,k)$  
such that $A({\tt vdw}(n,k))$ has the WLP over a field $K$ where ${\rm char}(K) =0$ as verified by Macaulay2.
A \cmark~ indicates that the corresponding
Artinian ring has the WLP, while a \xmark~
indicates that the ring fails to have the WLP.}
\label{TABLEWLP}
\end{table}

\begin{table}[h!]
\begin{tabular}{cccccc}
\hline
\hline
$k$ & 3 & 5 & 7 & 9 & 11 \\
\hline 
\hline
\mbox{smallest $n$ where} & 7 & 12 & 15 & 20 & 23  \\
\mbox{$A({\tt vdw}(n,k))$ fails WLP} & & & & & \\
\hline
\hline
\end{tabular}
\caption{For odd $3 \leq k \leq 11$, the smallest $n$ for which
$A({\tt vdw}(n,k))$ fails to have the WLP.}
\label{firstfailure}
\end{table}
Our understanding of the SLP for $A({\tt vdw}(n,k))$ is much more 
limited.  Again, we have used Macaulay2 to determine 
when $A({\tt vdw}(n,k))$ has the SLP for all $1 \leq k < n \leq 14$.  Our results 
are summarized in Table \ref{SLPTABLE}.  The behaviour 
of the SLP is much more mysterious, although the table does suggest 
that $A({\tt vdw}(n,k))$ fails to have the SLP if $k \geq 3$ and $n \gg k$.
 It would be 
interesting to better understand what is happening in this situation.

As a final comment, both tables seem to indicate that 
$A({\tt vdw}(n,k))$ has the SLP and the WLP if $k$ is ``close'' to $n$, 
that is, $n-k$ is ``small''.  This is indeed true if $n-k=1$, as shown in
Theorem \ref{NMINUS1}.
It would be interesting to prove this observation.

\begin{table}[h]
\begin{tabular}{|c|*{14}{c|}}
\hline
\diagbox{$n$}{$k$} & 1 & 2 & 3 & 4 & 5 & 6 & 7 & 8 & 9 & 10 & 11 & 12 & 13 & 14\\
\hline
%      1         2       3        4        5        6        7        8        9        10       11        12       13      14       
2  & \cmark & \gmark & \gmark & \gmark & \gmark & \gmark & \gmark & \gmark & \gmark & \gmark & \gmark & \gmark & \gmark & \gmark\\ \hline
3  & \cmark & \cmark & \gmark & \gmark & \gmark & \gmark & \gmark & \gmark & \gmark & \gmark & \gmark & \gmark & \gmark & \gmark\\ \hline
4  & \cmark & \cmark & \cmark & \gmark & \gmark & \gmark & \gmark & \gmark & \gmark & \gmark & \gmark & \gmark & \gmark & \gmark\\ \hline
5  & \cmark & \xmark & \cmark & \cmark & \gmark & \gmark & \gmark & \gmark & \gmark & \gmark & \gmark & \gmark & \gmark & \gmark\\ \hline
6  & \cmark & \xmark & \cmark & \cmark & \cmark & \gmark & \gmark & \gmark & \gmark & \gmark & \gmark & \gmark & \gmark & \gmark\\ \hline
7  & \cmark & \xmark & \xmark & \cmark & \cmark & \cmark & \gmark & \gmark & \gmark & \gmark & \gmark & \gmark & \gmark & \gmark\\ \hline
8  & \cmark & \cmark & \xmark & \cmark & \cmark & \cmark & \cmark & \gmark & \gmark & \gmark & \gmark & \gmark & \gmark & \gmark\\ \hline
9  & \cmark & \cmark & \xmark & \xmark & \cmark & \cmark & \cmark & \cmark & \gmark & \gmark & \gmark & \gmark & \gmark & \gmark\\ \hline
10 & \cmark & \cmark & \xmark & \xmark & \cmark & \cmark & \cmark & \cmark & \cmark & \gmark & \gmark & \gmark & \gmark & \gmark\\ \hline
11 & \cmark & \cmark & \xmark & \xmark & \xmark & \cmark & \cmark & \cmark & \cmark & \cmark & \gmark & \gmark & \gmark & \gmark\\ \hline
12 & \cmark & \cmark & \xmark & \xmark & \xmark & \cmark & \cmark & \cmark & \cmark & \cmark & \cmark & \gmark & \gmark & \gmark\\ \hline
13 & \cmark & \cmark & \xmark & \xmark & \xmark & \xmark & \cmark & \cmark & \cmark & \cmark & \cmark & \cmark & \gmark & \gmark\\ \hline
14 & \cmark & \cmark & \xmark & \xmark & \xmark & \xmark & \cmark & \cmark & \cmark & \cmark & \cmark & \cmark & \cmark & \gmark\\ \hline
%15 & \cmark & \cmark & \xmark & \xmark & \xmark & \xmark & \xmark & \cmark & \cmark & \cmark & \cmark & \cmark & \cmark & \cmark & \gmark & \gmark \\ \hline
%16 & \cmark & \cmark & \xmark & \xmark & \xmark & \xmark & \xmark & \cmark & \cmark & \cmark & \cmark & \-- & \-- & \-- & \-- & \gmark \\ \hline
%17 & \cmark & \cmark & \xmark & \xmark & \xmark & \xmark & \xmark & \xmark & \cmark & \cmark & \-- & \-- & \-- & \-- & \-- & \-- \\ 
%\hline
%18 & \cmark & \cmark & \xmark & \xmark & \xmark & \xmark & \xmark & \xmark & \cmark & \cmark & \-- & \-- & \-- & \-- & \-- & \-- \\ 
%\hline
%19 & \cmark & \cmark & \xmark & \xmark & \xmark & \xmark & \xmark & \xmark & \cmark & \cmark & \-- & \-- & \-- & \-- & \-- & \-- \\ 
%\hline
\end{tabular}\vspace{.5em}
\caption{
The van der Waerden complexes ${\tt vdw}(n,k)$ 
such that $A({\tt vdw}(n,k))$ has the SLP  over a field $K$ where ${\rm char}(K) =0$, as verified by Macaulay2.
A \cmark~ indicates that the corresponding
Artinian ring has the SLP, while a \xmark~
indicates that the ring fails to have the SLP.}
\label{SLPTABLE}
\end{table}

%%%%%%%%%%%%%%%%%%%%%%%%%%%%%%%%%%%%%%%%%

\appendix

\section{Bounds on the $f$-vector of ${\tt vdw}(n,3)$}
\label{appendix}

In this appendix, we provide a proof of Lemma \ref{lem.f-vector-vdw(n,3)}. We begin with the following lemma, whose
importance will become apparent later in the appendix.

\begin{lemma}\label{lem.technicalidentity}
    Let $n \geq 88$ be an integer.  Set
    $e = \max\{j ~|~ 1 +4j \leq n \}$ and $d = \max\{j ~|~ 1 +3j \leq n\}$.  Then
    $2d-3e+5 \leq 0.$
\end{lemma}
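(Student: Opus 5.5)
Since $e$ and $d$ are defined by floor conditions, the plan is to express them explicitly and bound them. From $1+4e \leq n < 1+4(e+1)$ we get $e = \lfloor (n-1)/4 \rfloor$. Similarly, $d = \lfloor (n-1)/3 \rfloor$. The inequality $2d - 3e + 5 \leq 0$ is then a comparison between roughly $\frac{2(n-1)}{3}$ and $\frac{3(n-1)}{4}$. The second grows faster, with difference $\frac{n-1}{12}$, so the inequality must hold once $n$ is large enough to absorb the constant $5$ and the rounding errors.

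Concretely, we use the bounds $d \leq \frac{n-1}{3}$ and $e \geq \frac{n-1}{4} - \frac{3}{4} = \frac{n-4}{4}$, which follow from $n-1 < 4(e+1)$ together with integrality, i.e., $n-1 \leq 4e+3$. Substituting these gives
$$2d - 3e + 5 \leq \frac{2(n-1)}{3} - \frac{3(n-4)}{4} + 5 = \frac{8(n-1) - 9(n-4) + 60}{12} = \frac{88 - n}{12}.$$
This is at most $0$ exactly when $n \geq 88$, which matches the hypothesis. This strongly suggests it is the intended argument.

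There is essentially no obstacle. The only care needed is in deriving the sharp floor bound $e \geq (n-4)/4$ from integrality rather than the weaker $e > (n-5)/4$, since the weaker bound gives only $(93-n)/12$. For completeness, one could also write $n-1 = 12q + r$ with $0 \leq r \leq 11$ and compute $d = 4q + \lfloor r/3 \rfloor$ and $e = 3q + \lfloor r/4 \rfloor$. This gives $2d - 3e + 5 = -q + 2\lfloor r/3\rfloor - 3\lfloor r/4\rfloor + 5$, and a finite check over the twelve residues confirms the bound. That route is only a cross-check; the direct inequality above suffices.
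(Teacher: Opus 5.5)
Your argument is correct and is essentially the paper's proof. The paper writes $n = 1+4e+\delta = 1+3d+\epsilon$ and bounds $9\delta-8\epsilon \le 27$, which is exactly your pair of bounds $d \le (n-1)/3$ (i.e.\ $\epsilon \ge 0$) and $e \ge (n-4)/4$ (i.e.\ $\delta \le 3$), and it reaches the same $(88-n)/12$. One small slip, in your side remark only: the weaker bound $e > (n-5)/4$ gives $2d-3e+5 < (97-n)/12$, not $(93-n)/12$; this does not affect the main argument.
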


\begin{proof}
    We can write $n$ as 
$$
    n  =  1  + 4e + \delta ~~\mbox{with $\delta \in \{0,1,2,3\}$, and }
    n  =  1 + 3d + \epsilon ~~\mbox{with $\epsilon \in \{0,1,2\}$.}
$$
Solving for $e$ and $d$ gives 
$$e = \frac{n-1-\delta}{4}~~\mbox{and}~~ d = \frac{n-1-\epsilon}{3}.$$
Thus 
\begin{eqnarray*}
    2d-3e+5 & = & 2\left(\frac{n-1-\epsilon}{3}\right) - 3
    \left(\frac{n-1-\delta}{4}\right) + 5 
     =  \frac{-n+61+ (9\delta-8\epsilon)}{12}.
\end{eqnarray*}
By considering all possible pairs $(\delta,\epsilon)$,
it follows that $9\delta-8\epsilon \leq 27$.
So 
$$  2d-3e+5 
     =  \frac{-n+61+ (9\delta-8\epsilon)}{12} \leq \frac{-n+88}{12}$$
Because $n \geq 88$, the conclusion now follows.
\end{proof}

\begin{lemma}[Lemma \ref{lem.f-vector-vdw(n,3)}]
Let $n$ be a integer such that $n \geq 8$.
If $f({\tt vdw}(n,3)) = (1,f_0,f_1,f_2,f_3)$ is the 
$f$-vector of ${\tt vdw}(n,3)$, then $f_1 \leq f_2$.
\end{lemma}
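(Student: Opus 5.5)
We plan to prove $f_1 \leq f_2$ by induction on $n$, mirroring the proof of Lemma \ref{lem.f-vector}, but comparing the \emph{increments} of $f_1$ and $f_2$ when passing from ${\tt vdw}(n-1,3)$ to ${\tt vdw}(n,3)$. Write $\Delta(n) = {\tt vdw}(n,3)$. Every face of $\Delta(n-1)$ is a face of $\Delta(n)$, and the new faces are exactly those containing $x_n$ that lie in some facet $\{x_{n-3j},x_{n-2j},x_{n-j},x_n\}$ with $1 \leq j \leq d$, where $d = \max\{j ~|~ 1+3j \leq n\}$. So we will compute
$$\Delta_1(n) = f_1(\Delta(n)) - f_1(\Delta(n-1)), \qquad \Delta_2(n) = f_2(\Delta(n))-f_2(\Delta(n-1))$$
and show $\Delta_1(n) \leq \Delta_2(n)$ for all $n$ beyond some threshold. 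Combined with a finite check of the base range, this gives the result.

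For the edges, a new edge is $\{x_{n-m},x_n\}$ with $m = j,2j$ or $3j$ for some $1 \leq j \leq d$; thus $\Delta_1(n)$ is the number of $m \leq n-1$ lying in $\{1,\ldots,d\} \cup 2\{1,\ldots,d\} \cup 3\{1,\ldots,d\}$. We bound it from above by inclusion--exclusion. For the $2$-faces, a new triangle is $\{x_{n-a},x_{n-b},x_n\}$ with $\{a,b\} \subseteq \{j,2j,3j\}$, that is, of type $\{j,2j\}$, $\{j,3j\}$ or $\{2j,3j\}$, the last requiring $n-3j \geq 1$ only via the facet. Coincidences between triangles of different $j$ occur only when the ratios match: $\{j,2j\}$ and $\{2j',3j'\}$ have ratios $2$ and $3/2$, so they never coincide, and in general distinct types have distinct ratios $2,3,3/2$. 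Hence $\Delta_2(n) = 3d$ exactly. Meanwhile $\Delta_1(n) \leq 3d - (\text{overlaps})$. The overlaps are $m \in \{1..d\}\cap 2\{1..d\}$, which number $\lfloor d/2\rfloor$, and $m \in 2\{1..d\}\cap 3\{1..d\}$, which are multiples of $6$ up to $2d$. So $\Delta_1(n) \leq 3d$ trivially, giving $\Delta_1 \leq \Delta_2$ for every $n$.

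If this simple comparison works, the induction step is immediate for all $n \geq 9$, and the base case $n=8$ is checked directly from the table of $f$-vectors ($f_1 \leq f_2$ at $n=8$). The paper's Lemma \ref{lem.technicalidentity}, which involves $e=\max\{j ~|~ 1+4j\leq n\}$, suggests the authors instead use a cruder bookkeeping, for instance counting only some edges and triangles or grouping increments of length $4$. That would require a direct check for $8 \leq n < 88$ and an asymptotic inequality $2d-3e+5\leq 0$ beyond. I will first try my sharper increment argument.

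The main obstacle is verifying that $\Delta_2(n)=3d$ exactly, i.e., that the $3d$ candidate triangles through $x_n$ are distinct and all new. They are new because they contain $x_n$. They are distinct because the unordered pair of offsets determines $j$ and the type via the ratio. Should that count fail for some subtle reason, I would fall back on the cruder route: lower-bound $\Delta_2$ by counting only the triangles visibly distinct, giving roughly $3e$ plus a constant. I would then upper-bound $\Delta_1$ by $2d$ plus a constant, invoke Lemma \ref{lem.technicalidentity} for $n\geq 88$, and handle $8\leq n\leq 87$ by computer.
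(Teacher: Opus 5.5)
Your induction scheme is reasonable, but the counting rests on a false premise: the faces of $\Delta(n)$ not in $\Delta(n-1)$ are \emph{not} only those containing $x_n$. A new facet $F_j=\{x_{n-3j},x_{n-2j},x_{n-j},x_n\}$ can also contribute new faces that avoid $x_n$ whenever $j>e$, where $e=\max\{j \mid 1+4j\le n\}$.

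\emph{Triangles.} Any $3$-term progression inside a $4$-term progression has the same common difference. So $\{x_{n-3j},x_{n-2j},x_{n-j}\}$ lies in $\Delta(n-1)$ only via $\{n-4j,\dots,n-j\}$, which requires $j\le e$. Hence $\Delta_2(n)=3d+(d-e)=4d-e$, not $3d$. Your argument that the $3d$ triangles through $x_n$ are distinct and new is correct, but it only gives a lower bound on $\Delta_2(n)$. That part is harmless.

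\emph{Edges.} The same omission is not harmless here. For $e<j\le d$, the edge $\{x_{n-3j},x_{n-j}\}$ is new whenever $3\nmid j$. The edges $\{x_{n-3j},x_{n-2j}\}$ and $\{x_{n-2j},x_{n-j}\}$ are new whenever $\gcd(j,6)=1$. For example, at $n=13$ the edge $\{x_1,x_9\}\subseteq\{x_1,x_5,x_9,x_{13}\}$ is not a face of ${\tt vdw}(12,3)$. At $n=16$ your counts give increments $11$ and $15$, while Table \ref{PLS} gives $89-74=15$ and $116-99=17$. So your step ``$\Delta_1(n)\le 3d$ trivially'' is unproved, because your count of $\Delta_1(n)$ leaves these edges out.

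Once both counts are corrected, the trivial comparison fails. The honest estimates are $\Delta_1(n)\le |[d]\cup 2[d]\cup 3[d]|+3(d-e)$ and $\Delta_2(n)=4d-e$. Using only $|[d]\cup 2[d]\cup 3[d]|\le 3d$ would then require $d\le e$, which is false for all $n\ge 10$. So the overlaps among the edges through $x_n$ are the heart of the matter.

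The paper exploits these overlaps only for $j\le e$ (the bound $6\ell+5$) and gets $\Delta_1(n)\le 6d-4e+5$. That is why it needs Lemma \ref{lem.technicalidentity} together with a computer check for $8\le n\le 88$.

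Your idea can actually be repaired, and sharpened:
\begin{itemize}
\item The exact count is $|[d]\cup 2[d]\cup 3[d]|=2d+\lceil d/2\rceil-2\lfloor d/3\rfloor+\lfloor d/6\rfloor\le 2d+1$.
\item So it suffices to have $d+1\le 2e$, which holds for all $n\ge 16$.
\item For $9\le n\le 15$, the inequality $|[d]\cup 2[d]\cup 3[d]|\le d+2e$ can be checked by hand.
\item With the base case $n=8$, where $f_1=f_2=24$, this proves the lemma with no large computation.
\end{itemize}

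Your fallback plan would not rescue the argument as written. It still ignores the new edges avoiding $x_n$. These number about $\tfrac{4}{3}(d-e)$, which grows with $n$, so the proposed bound $\Delta_1(n)\le 2d+O(1)$ is false.
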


\begin{proof}
    By a computer computation, we can verify the statement
    for all $8 \leq n \leq 88$ (see Table \ref{PLS}).  So,
    we assume $n \geq 88$, and prove the result by induction
    on $n$.
    Let $\Delta(n) = {\tt vdw}(n,3)$, and let $f_i(\Delta(n))$
    denote the number of faces of dimension $i$ of $\Delta(n)$.
    
    Since an arithmetic progression of length three
    on $[n-1]$ is also an arithmetic progression of
    length three on $[n]$, every facet of $\Delta(n-1)$
    is also a facet of $\Delta(n)$.  Thus, the facets
    of $\Delta(n)$ that are not in $\Delta(n-1)$ 
    are precisely the facets of $\Delta(n)$ that contain $x_n$.
    Moreover, the number of faces of dimension $i$ of $\Delta(n)$
    is the number of faces of dimension $i$ of $\Delta(n-1)$
    plus all the new additional faces of dimension $i$ that come
    from the new facets.  More precisely,
    $$f_i(\Delta(n)) = f_i(\Delta(n-1)) + |\{ F \in \Delta(n) \setminus \Delta(n-1) ~|~ \dim F = i\}|.$$
    So, we need to compute (or bound) the number
    of new faces of dimension $i=1$ and $i=2$ that are contributed
    by the facets of $\Delta(n)$ that contain $x_n$.
    
    Setting $d = \max\{j ~|~ 1+3j \leq n\}$, there are 
    exactly $d$ facets of $\Delta(n)$ that contain $x_n$, namely
    $$F_j = \{x_{n-3j},x_{n-2j},x_{n-j},x_n\}~~\mbox{for 
    $j=1,\ldots,d$}.
    $$

    We first consider $i=2$, i.e., the value of $f_2(\Delta(n))$.
    Let $e = \max\{j ~|~ 1 + 4j \leq n\}$.  For each 
    $j =1,\ldots,d$, the facet $F_j$ contributes 
    four faces of dimension two to $\Delta(n)$:
    $$\{x_{n-3j},x_{n-2j},x_{n-j}\},~ 
    \{x_{n-3j},x_{n-j},x_{n}\},~ 
    \{x_{n-3j},x_{n-2j},x_{n}\},~ 
    \{x_{n-2j},x_{n-j},x_{n}\}.
    $$
    For $j=1,\ldots,e$, the two dimensional face $\{x_{n-3j},x_{n-2j},x_{n-j}\}$ also belongs to the facet
    $$G_j = \{x_{n-4j},x_{n-3j},x_{n-2j},x_{n-j}\}$$
    of $\Delta(n-1)$. Note $G_j$ is a facet because $1 \leq n-4e \leq n-4j$.  So, the facets $F_1,\ldots,F_e$ each contribute
    three new faces of dimension two to $\Delta(n)$.  
    
    On the other
    hand, the facets $F_{e+1},\ldots,F_d$ all contribute
    four new faces of dimension two.  Indeed, if 
    $\{x_{n-3j},x_{n-2j},x_{n-j}\}$ belonged to $\Delta(n-1)$,
    then $\{n-3j,n-2j,n-j\}$ would be part of an arithmetic progression of length $3$ on $[n-1]$.  But this would require
    either $\{n-4j,n-3j,n-2j,n-j\}$ or $\{n-3j,n-2j,n-j,n\}$ to
    be an arithmetic progression on $[n-1]$.  However, the first is
    not possible since $j > e$, so $1+4j > n$, i.e., $1 > n-4j$,
    and the second progression is not possible since $n \not\in[n-1]$. 
    Summarizing the above discussion, we thus have
    \begin{equation}\label{eq:formulaf_2}
    f_2(\Delta(n)) = f_2(\Delta(n-1))+ 3e + 4(d-e) = f_2(\Delta(n-1)) + 4d-e.
    \end{equation}

    We now find an upper bound on $f_1(\Delta(n))$.  
     For each 
    $j =1,\ldots,d$, the facet $F_j$ contributes 
    six faces of dimension one to $\Delta(n)$:
    $$\{x_{n-3j},x_{n-2j}\},~ \{x_{n-3j},x_{n-j}\},~~\{x_{n-2j},x_{n-j}\},~~
    \{x_{n-3j},x_n\},~~ \{x_{n-2j},x_n\},~~ \{x_{n-j},x_n\}.$$
    For $j=1,\ldots,e$, the one-dimensional faces
    $\{x_{n-3j},x_{n-2j}\}, \{x_{n-3j},x_{n-j}\},$ and 
    $\{x_{n-2j},x_{n-j}\}$ belong to the facet 
    $G_j = \{x_{n-4j},x_{n-3j},x_{n-2j},x_{n-j}\} \in \Delta(n-1)$.  So, the facets $F_1,\ldots,F_e$ each contribute
    at most three new one-dimensional faces. 

    We can further refine this bound.  If $j \in \{1,\ldots,e\}$
    and $j=3k$, then
    \begin{eqnarray*}
        \{x_{n-j},x_n\} = \{x_{n-3k},x_n\} 
        & \subseteq & F_k = \{x_{n-3k},x_{n-2k},x_{n-k},x_n\} \\
        \{x_{n-2j},x_n\} = \{x_{n-6k},x_n\} 
        & \subseteq & F_{2k} = \{x_{n-6k},x_{n-4k},x_{n-2k},x_n\}.
    \end{eqnarray*}
   That is, two of the one-dimensional faces of  $F_j = F_{3k}$ already appear in $F_k$ and $F_{2k}$, so $F_{3k}$
   contributes at most one new one-dimensional face to
   $\Delta(n)$.

   If $j \in \{1,\ldots,e\}$ is such that $j=2k$ and $3\nmid k$, then
   \begin{eqnarray*}
        \{x_{n-j},x_n\} = \{x_{n-2k},x_n\} 
        & \subseteq & F_{k} = \{x_{n-3k},x_{n-2k},x_{n-k},x_n\},
    \end{eqnarray*}
    that is, if $F_j = F_{2k}$ and $3\nmid k$, then
    $F_{2k}$ contributes at most two new one-dimensional
    faces to $\Delta(n)$.

    Set $\ell =\lfloor \frac{e}{3} \rfloor$ and partition
    $F_1,\ldots,F_e$ into the following sets:
    $$\{F_1,F_2,F_3\} \cup \{F_4,F_5,F_6\} \cup \cdots
    \{F_{3\ell-2},F_{3\ell-1},F_{3\ell}\} \cup H$$
    where 
    $$H = \begin{cases}
        \emptyset & \mbox{if $e = 3\ell$} \\
        \{F_e\} & \mbox{if $e = 3\ell +1$} \\
        \{F_{e-1},F_{e}\} & \mbox{If $e=3\ell+2$.}
    \end{cases}$$
    Each triple $\{F_{3k-2},F_{3k-1},F_{3k}\}$ contributes
    at most six new faces of dimension one to $\Delta(n)$, that
    is, the one new face from $F_{3k}$, two from the facet $F_{3k-1}$ 
    or $F_{3k-2}$ (whichever  has an even index), 
    and three from the remaining facet.  Note that
    $H$ contributes at most $0, 3$ or $5$ 
    faces of dimension one (the 5 comes from the fact that either $e-1$ or $e$ is
    divisible by two, so one of $F_{e-1}$ and
    $F_{e}$
    contributes at
    most two new one-dimensional faces).
    So $F_1,\ldots,F_e$ contribute at most
    $6\ell + 5$ facets of dimension one.  

    Because the facets $F_{e+1},\ldots,F_d$ 
    each can contribute at most six facets of dimension one to $\Delta(n)$, we have 
    \begin{equation}\label{eqn.formula_f1}
    f_1(\Delta(n)) \leq f_1(\Delta(n-1))+ 6\ell+5 + 6(d-e) \leq f_1(\Delta(n-1))+ 6d -4e +5.
    \end{equation}
    where we used the fact that $\ell \leq \frac{e}{3}$.

    Since $n \geq 88$, by Lemma \ref{lem.technicalidentity}
    $$2d-3e+5 \leq 0 ~~\Leftrightarrow 6d-4e+5 \leq 4d-e.$$
    By induction $f_1(\Delta(n-1)) \leq f_2(\Delta(n-1))$.  Thus, by using 
    \eqref{eq:formulaf_2} and \eqref{eqn.formula_f1} and the above inequality,
    we get
    \begin{eqnarray*}
        f_1(\Delta(n)) & \leq & 
        f_1(\Delta(n-1)) + 6d-4e+5\\
        & \leq & f_2(\Delta(n-1))+4d-e = f_2(\Delta(n)),
    \end{eqnarray*}
    thus completing the proof.
\end{proof}

\begin{table}[ht]
\centering
\scriptsize
\setlength{\tabcolsep}{4pt}

\begin{tabular}{|l|r|r||l|r|r||l|r|r|}
\hline
$n$ & $\dim_K A_2$ & $\dim_K A_3$ &
$n$ & $\dim_K A_2$ & $\dim_K A_3$ &
$n$ & $\dim_K A_2$ & $\dim_K A_3$ \\
\hline

$A({\tt vdw}(4,3))$  & 6   & 4   &
$A({\tt vdw}(33,3))$ & 401 & 540 &
$A({\tt vdw}(62,3))$ & 1465 & 1990 \\

$A({\tt vdw}(5,3))$  & 9 & 7   &
$A({\tt vdw}(34,3))$ & 428 & 576 &
$A({\tt vdw}(63,3))$ & 1514 & 2055 \\

$A({\tt vdw}(6,3))$  & 12 & 10   &
$A({\tt vdw}(35,3))$ & 455 & 612 &
$A({\tt vdw}(64,3))$ & 1564 & 2124\\

$A({\tt vdw}(7,3))$  & 18 & 17   &
$A({\tt vdw}(36,3))$ & 482 & 648 &
$A({\tt vdw}(65,3))$ & 1613 & 2192 \\

$A({\tt vdw}(8,3))$  & 24 & 24   &
$A({\tt vdw}(37,3))$ & 510 & 687 &
$A({\tt vdw}(66,3))$ & 1662 & 2260 \\

$A({\tt vdw}(9,3))$  & 29 & 30   &
$A({\tt vdw}(38,3))$ & 538 & 726 &
$A({\tt vdw}(67,3))$ & 1714 & 2332 \\

$A({\tt vdw}(10,3))$  & 35 & 40   &
$A({\tt vdw}(39,3))$ & 566 & 765 &
$A({\tt vdw}(68,3))$ & 1766 & 2404\\

$A({\tt vdw}(11,3))$  & 41 & 50   &
$A({\tt vdw}(40,3))$ & 600 & 808 &
$A({\tt vdw}(69,3))$ & 1815 & 2475 \\

$A({\tt vdw}(12,3))$  & 47 & 60   &
$A({\tt vdw}(41,3))$ & 633 & 850 &
$A({\tt vdw}(70,3))$ & 1870 & 2550 \\

$A({\tt vdw}(13,3))$  & 56 & 73   &
$A({\tt vdw}(42,3))$ & 666 & 892 &
$A({\tt vdw}(71,3))$ & 1925 & 2625\\

$A({\tt vdw}(14,3))$  & 65 & 86   &
$A({\tt vdw}(43,3))$ & 702 & 938 &
$A({\tt vdw}(72,3))$ & 1980 & 2700 \\

$A({\tt vdw}(15,3))$  & 74 & 99   &
$A({\tt vdw}(44,3))$ & 738 & 984 &
$A({\tt vdw}(73,3))$ & 2036 & 2778\\

$A({\tt vdw}(16,3))$ & 89 & 116   &
$A({\tt vdw}(45,3))$ & 771 & 1029 &
$A({\tt vdw}(74,3))$ & 2092 & 2856 \\

$A({\tt vdw}(17,3))$  & 103 & 132   &
$A({\tt vdw}(46,3))$ & 805 & 1078 &
$A({\tt vdw}(75,3))$ & 2148 & 2934\\

$A({\tt vdw}(18,3))$  & 117 & 148   &
$A({\tt vdw}(47,3))$ & 839 & 1127 &
$A({\tt vdw}(76,3))$ & 2210 & 3016\\

$A({\tt vdw}(19,3))$  & 132 & 168   &
$A({\tt vdw}(48,3))$ & 873 & 1176 &
$A({\tt vdw}(77,3))$ & 2269 & 3097\\

$A({\tt vdw}(20,3))$  & 147 & 188   &
$A({\tt vdw}(49,3))$ & 910 & 1228 &
$A({\tt vdw}(78,3))$ & 2328 & 3178\\

$A({\tt vdw}(21,3))$  & 159 & 207   &
$A({\tt vdw}(50,3))$ & 947 & 1280 &
$A({\tt vdw}(79,3))$ & 2390 & 3263\\

$A({\tt vdw}(22,3))$  & 177 & 230   &
$A({\tt vdw}(51,3))$ & 984 & 1332 &
$A({\tt vdw}(80,3))$& 2452 & 3348 \\

$A({\tt vdw}(23,3))$  & 195 & 253   &
$A({\tt vdw}(52,3))$ & 1027 & 1388 &
$A({\tt vdw}(81,3))$ & 2513 & 3432\\

$A({\tt vdw}(24,3))$  & 213 & 276   &
$A({\tt vdw}(53,3))$ & 1067 & 1443 &
$A({\tt vdw}(82,3))$ & 2575 & 3520\\

$A({\tt vdw}(25,3))$  & 234 & 302   &
$A({\tt vdw}(54,3))$ & 1107 & 1498 &
$A({\tt vdw}(83,3))$ & 2637 & 3608\\

$A({\tt vdw}(26,3))$  & 255 & 328   &
$A({\tt vdw}(55,3))$ & 1148 & 1557 &
$A({\tt vdw}(84,3))$ & 2699 & 3696\\

$A({\tt vdw}(27,3))$  & 276 & 354   &
$A({\tt vdw}(56,3))$ & 1189 & 1616 &
$A({\tt vdw}(85,3))$ & 2764 & 3787\\

$A({\tt vdw}(28,3))$  & 298 & 384   &
$A({\tt vdw}(57,3))$ & 1229 & 1674  &
$A({\tt vdw}(86,3))$ & 2829 & 3878\\

$A({\tt vdw}(29,3))$  & 317 & 413   &
$A({\tt vdw}(58,3))$ & 1275 & 1736 &
$A({\tt vdw}(87,3))$ & 2894 & 3969\\

$A({\tt vdw}(30,3))$  & 336 & 442   &
$A({\tt vdw}(59,3))$ & 1321 & 1798 &
$A({\tt vdw}(88,3))$ & 2965 & 4064\\

$A({\tt vdw}(31,3))$  & 358 & 475   &
$A({\tt vdw}(60,3))$ & 1367 & 1860 &
$A({\tt vdw}(89,3))$ & 3035 & 4158\\

$A({\tt vdw}(32,3))$  & 380 & 508   &
$A({\tt vdw}(61,3))$ & 1416 & 1925 &
$A({\tt vdw}(90,3))$ & 3105 & 4252\\

\hline
\end{tabular}
\caption{Comparing $\dim_K A_2$ and $\dim_K A_3$ when $A = A({\tt vdw}(n,3))$ for $n=4,\ldots,90$. }
\label{PLS}
\end{table}

\section{Matrices used in Theorem \ref{thm.casek=3}}
\label{appendixB}

In the proof of Theorem \ref{thm.casek=3}, checking 
that $A({\tt vdw}(5,3))$ and $A({\tt vdw}(6,3))$ has the WLP
can be reduced to checking if the matrix representation
of the map $\times \ell$ has maximal rank (see Example \ref{ex.lingalgsetup}).  

For completeness, here are 
    the two matrices used to check that $A({\tt vdw}(5,3))$ 
    has the WLP in degrees two and three:
\begin{center}
\begin{blockarray}{cccccccccc}
$x_1x_2$ & $x_1x_3$ & $x_1x_4$ & $x_2x_3$ & $x_2x_4$ & $x_2x_5$ & $x_3x_4$ & $x_3x_5$ & $x_4x_5$\\
\begin{block}{[ccccccccc]c}
1 & 1 & 0 & 1 & 0 & 0 & 0 & 0 & 0 & $x_1x_2x_3$\\
1 & 0 & 1 & 0 & 1 & 0 & 0 & 0 & 0 & $x_1x_2x_4$\\
0 & 1 & 1 & 0 & 0 & 0 & 1 & 0 & 0 & $x_1x_3x_4$\\
0 & 0 & 0 & 1 & 1 & 0 & 1 & 0 & 0 & $x_2x_3x_4$\\
0 & 0 & 0 & 1 & 0 & 1 & 0 & 1 & 0 & $x_2x_3x_5$\\
0 & 0 & 0 & 0 & 1 & 1 & 0 & 0 & 1 & $x_2x_4x_5$\\
0 & 0 & 0 & 0 & 0 & 0 & 1 & 1 & 1 & $x_3x_4x_5$\\
\end{block}
\end{blockarray}
\end{center}
and
\[
\begin{blockarray}{ccccccc c}
x_1x_2x_3 & x_1x_2x_4 & x_1x_3x_4 & x_2x_3x_4 & x_2x_3x_5 & x_2x_4x_5 & x_3x_4x_5 & \\
\begin{block}{[ccccccc]c}
1 & 1 & 1 & 1 & 0 & 0 & 0 & x_1x_2x_3x_4\\
0 & 0 & 0 & 1 & 1 & 1 & 1 & x_2x_3x_4x_5\\
\end{block}
\end{blockarray}
\]
Both of these matrices have maximal rank, namely $7$ and $2$, 
respectively, in characteristic $0$.  (Note that the first matrix has rank 5 in characteristic
2, but we are avoiding this characteristic.)

The matrices for the case $n=6$ are 
\footnotesize
\[
\begin{blockarray}{cccccccccccc c}
x_1x_2 & x_1x_3 & x_1x_4 & 
x_2x_3 & x_2x_4 & x_2x_5 & 
x_3x_4 & x_3x_5 & x_3x_6 &
x_4x_5 & x_4x_6 & x_5x_6 & \\
\begin{block}{[cccccccccccc]c}
1 & 1 & 0 & 1 & 0 & 0 & 0 & 0 & 0 & 0 & 0 & 0 & x_1x_2x_3\\
1 & 0 & 1 & 0 & 1 & 0 & 0 & 0 & 0 & 0 & 0 & 0 & x_1x_2x_4\\
0 & 1 & 1 & 0 & 0 & 0 & 1 & 0 & 0 & 0 & 0 & 0 & x_1x_3x_4\\
0 & 0 & 0 & 1 & 1 & 0 & 1 & 0 & 0 & 0 & 0 & 0 & x_2x_3x_4\\
0 & 0 & 0 & 1 & 0 & 1 & 0 & 1 & 0 & 0 & 0 & 0 & x_2x_3x_5\\
0 & 0 & 0 & 0 & 1 & 1 & 0 & 0 & 0 & 1 & 0 & 0 & x_2x_4x_5\\
0 & 0 & 0 & 0 & 0 & 0 & 1 & 1 & 0 & 1 & 0 & 0 & x_3x_4x_5\\
0 & 0 & 0 & 0 & 0 & 0 & 1 & 0 & 1 & 0 & 1 & 0 & x_3x_4x_6\\
0 & 0 & 0 & 0 & 0 & 0 & 0 & 1 & 1 & 0 & 0 & 1 & x_3x_5x_6\\
0 & 0 & 0 & 0 & 0 & 0 & 0 & 0 & 0 & 1 & 1 & 1 & x_4x_5x_6\\
\end{block}
\end{blockarray}
\]
\normalsize
and
\footnotesize
\[
\begin{blockarray}{cccccccccc c}
x_1x_2x_3 & x_1x_2x_4 & x_1x_3x_4 & x_2x_3x_4 & x_2x_3x_5 & x_2x_4x_5 & x_3x_4x_5 & x_3x_4x_6 & x_3x_5x_6 & x_4x_5x_6 & \\
\begin{block}{[cccccccccc]c}
1 & 1 & 1 & 1 & 0 & 0 & 0 & 0 & 0 & 0 & x_1x_2x_3x_4\\
0 & 0 & 0 & 1 & 1 & 1 & 1 & 0 & 0 & 0 & x_2x_3x_4x_5\\
0 & 0 & 0 & 0 & 0 & 0 & 1 & 1 & 1 & 1 & x_3x_4x_5x_6\\
\end{block}
\end{blockarray}
\]
\normalsize
Again, both of these matrices have maximal rank, namely 10 and 3, so $A({\tt vdw}(6,3))$ has the WLP. 

%%%%%%%%%%%%%%%%%%%%%%%%%%%%%%%%%%%%%%%%%
\subsection*{Acknowledgments}
We thank the referee for their helpful comments and suggestions.
Some of the results in this paper first appeared in the MSc thesis of
Ragunathan \cite{R2025}.  We thank Brett Nasserden
for useful discussions.
This research was enabled in part by support provided by 
Compute Ontario ({\tt computeontario.ca})
and the Digital Research Alliance of Canada ({\tt alliance.can.ca}).
All computations were performed using \textit{Macaulay2}~\cite{M2}.
Van Tuyl's research is supported by NSERC Discovery Grant 2024-05299.
%%%%%%%%%%%%%%%%%%%%%%%%%%%%%%%%%%%%%%%%%

\end{document}